\documentclass[12pt,oneside]{amsart}
\usepackage[parfill]{parskip}
\usepackage{tikz-cd}
\usepackage{amsmath}
\usepackage{geometry}
\usepackage{latexsym}
\usepackage{amsthm}
\usepackage{amssymb}
\usepackage{amscd}
\usepackage{bm}
\usepackage[font=small,labelfont=bf]{caption}
\usepackage{graphicx}
\usepackage{epsfig}
\usepackage{epstopdf}
\usepackage{placeins}
\usepackage{mathtools}
\usepackage{subfigure}
\usepackage{stackengine}
\usepackage{appendix}

\stackMath

\DeclarePairedDelimiter{\ceil}{\lceil}{\rceil}
\DeclarePairedDelimiter{\floor}{\lfloor}{\rfloor}
\providecommand{\N}{\mathbb{N}}
\providecommand{\R}{\mathbb{R}}

\providecommand{\Z}{\mathbb{Z}}
\providecommand{\vol}{\mbox{vol}}

\providecommand{\abs}[1]{\left\vert#1\right\vert}
\providecommand{\norm}[1]{\left\Vert#1\right\Vert}
\providecommand{\set}[1]{\left\{#1\right\}}

\providecommand{\floor}[1]{\lfloor\{#1\rfloor\}}

\providecommand{\paren}[1]{\left( #1 \right)}

\providecommand{\brac}[1]{\left [ #1 \right]}
\providecommand{\set}[1]{\left { #1 \right}}

\newcommand{\s}[1]{\begin{equation*} \begin{split} #1 \end{split} \end{equation*}}

\providecommand{\eqref}[1]{\left(\ref{#1}\right)}

\newtheorem{theorem}{Theorem}
\newtheorem{Lemma}[theorem]{Lemma}
\newtheorem{conj}[theorem]{Conjecture}

\newtheorem{Definition}[theorem]{Definition}
\newtheorem{Proposition}[theorem]{Proposition}

\newtheorem{corollary}[theorem]{Corollary}
\newtheorem{counterexample}[theorem]{Counterexample}


\newcommand{\PH}{\mathit{PH}}

\begin{document}
\title[Fractal Dimension and Persistent Homology]{Fractal Dimension and the Persistent Homology \\ of Random Geometric Complexes}
\author{Benjamin Schweinhart}
\date{June 2020}

\begin{abstract}
We prove that the fractal dimension of a metric space equipped with an Ahlfors regular measure can be recovered from the persistent homology of random samples. Our main result is that if $x_1,\ldots, x_n$ are i.i.d.~samples from a $d$-Ahlfors regular measure on a metric space, and  $E^0_\alpha\left(x_1,\ldots,x_n\right)$ denotes the $\alpha$-weight of the minimum spanning tree on $x_1,\ldots,x_n:$
\[E_\alpha^0\left(x_1,\ldots,x_n\right)=\sum_{e\in T\left(x_1,\ldots,x_n\right)} |e|^\alpha\,,\]
then there exist constants $0<C_1\leq C_2$ so that 
\[C_1\leq n^{-\frac{d-\alpha}{d}} E^0_\alpha\left(x_1,\ldots,x_n\right)\leq C_2\,\]
with high probability as $n\rightarrow \infty.$ In particular, $d$ can be recovered from the limit
\[\log\big(E^0_\alpha(x_1,\ldots,x_n)\big)/\log(n)\longrightarrow (d-\alpha)/d\,.\]
This is a generalization of a result of Steele~\cite{1988steele} from the non-singular case to the fractal setting. We also construct an example of an Ahlfors regular measure for which the limit $\lim_{n\rightarrow\infty} n^{-\frac{d-\alpha}{d}} E^0_\alpha\left(x_1,\ldots,x_n\right)$ does not exist with high probability, and prove analogous results for weighted sums defined in terms of higher dimensional persistent homology.
\end{abstract}
\maketitle

\section{Introduction}

The first precise notion of a fractal dimension was proposed by Hausdorff in 1918~\cite{2003edgar,1918hausdorff}. Since then, many  other definitions have been put forward, including the box~\cite{1928bouligand} and correlation~\cite{1983grassberger} dimensions. These quantities do not agree in general, but coincide on a class of regular sets. Fractal dimension was popularized by Mandelbrot in the 1970s and 1980s~\cite{1982mandelbrot,1977mandelbrot}, and it has since found a wide range of applications in subjects including medicine~\cite{2000baish,2009lopes}, ecology~\cite{2004halley}, materials science~\cite{1999davies,2008yu}, and the analysis of large data sets~\cite{2000barbara,2010traina}. It is also important in pure mathematics and mathematical physics, in disciplines ranging from dynamics~\cite{1980takens} to probability~\cite{2008beffara}.

Recently, there has been a surge of interest in applications of topology, and of persistent homology in particular. Several authors have proposed estimators of fractal dimension defined in terms of minimum spanning trees and higher dimensional persistent homology~\cite{2019adams, 2012macpherson, 2014mate,2000robins, 1992weygaert}, and provided empirical evidence that those quantities agreed with classical notions of fractal dimension. Here, we provide the first rigorous justification for the use of random minimum spanning trees and higher dimensional persistent homology to estimate fractal dimension. 

We define a persistent homology dimension for measures (Definition~\ref{defn_phdim}) and prove that it equals the Hausdorff dimension for a wide class of ``regular'' measures (Corollary~\ref{thm_dimension}). In concurrent, separate work with J. Jaquette~\cite{2019jaquette}, we implement an algorithm to compute this persistent homology dimension and provide computational evidence that it performs as well or better than classical dimension estimation techniques.

Informally, a set of ``fractal dimension'' $d$ is self-similar in the sense that its ``local properties'' measured at scale $\epsilon$ scale as $\epsilon^d$ or $\epsilon^{-d}$ for some positive real number $d$ that may be non-integral. This is not well-defined in general, and there exist ``multifractals'' for which different local properties give different values of $d.$ Here, we assume a standard regularity hypothesis that implies that the fractal dimension of a measure is well-defined in the sense that the various classical notions of fractal dimension --- including the Hausdorff, box-counting, and  correlation dimensions --- coincide. This is done by taking the volumes of balls centered at points in our set as the defining ``local property.''
\begin{Definition}[\cite{1956ahlfors,1993david}]
\label{defn:ahlfors}
A probability measure $\mu$ supported on a metric space $X$ is $d$-Ahlfors regular if there exist positive real numbers $c$ and $\delta_0$ so that
\begin{equation}
\label{ahlfors_equation}
\frac{1}{c}\, \delta^{d} \leq \mu\paren{B_\delta\paren{x}} \leq c\,\delta^{d}
\end{equation}
for all $x\in X$ and $\delta<\delta_0,$ where $B_\delta\paren{x}$ is the open ball of radius $\delta$ centered at $x.$
\end{Definition}
Ahlfors regularity is a common hypothesis used when studying geometry and analysis in the fractal setting~\cite{1993david,1997david,2010kozma,2010mackay,2017orponen}. If $\mu$ is $d$-Ahlfors regular on $X$ then it is comparable to the $d$-dimensional Hausdorff measure on $X,$ and the Hausdorff measure is itself $d$-Ahlfors regular. Examples of Ahlfors regular measures include the natural measures on the  Sierpi\'{n}ski triangle and Cantor set, and, more generally, on any self-similar subset of Euclidean space defined by an iterated function system whose correct-dimensional Hausdorff measure is positive~\cite{2015farkas} (a weaker requirement than the usual open set condition); a certain well-studied measure on the boundary of certain hyperbolic groups including the fundamental group of a compact, negatively curved manifold~\cite{1993coornaert,1997david}; and bounded probability densities on a compact manifold, either with the intrinsic metric or one induced by an embedding in Euclidean space (these are indeed ``self-similar'' sets). As such, our methods and results will  be more general than previous papers on the absolutely continuous case. Standard arguments used in proofs for the non-singular case do not work here, and laws of large numbers that follow from them are false for some Ahlfors regular measures (as we will see in Counterexample~\ref{prop:noLimit} below).  

We study the asymptotic behavior of random variables of the form 
\[E_{\alpha}^i\paren{x_1,\ldots,x_n}=\sum_{I \in \PH_i\paren{x_1,\ldots,x_n}} \abs{I}^{\alpha}\,,\]
where $\set{x_j}_{j\in\N}$ are i.i.d.~samples from a probability measure $\mu$ on a metric space, $\PH_i\paren{x_1,\ldots,x_n}$ denotes the $i$-dimensional reduced persistent homology of the \v{C}ech or Vietoris--Rips complex of $\set{x_1,\ldots,x_n},$ and $\abs{I}$ is the length of a persistent homology interval. Unless otherwise specified, our results apply to the persistent homology of either  the \v{C}ech or Vietoris--Rips complex, though the constants may differ. The case where $i=0$ and $\mu$ is absolutely continuous is already well-studied, under a different guise: if $T(x_1,\ldots,x_n)$ denotes the minimum spanning tree on $x_1,\ldots,x_n$ and
\[E_\alpha\paren{x_1,\ldots,x_n}=\sum_{e\in T\paren{x_1,\ldots,x_n}} \abs{e}^\alpha\,,\]
 then
\[E_\alpha\paren{x_1,\ldots,x_n}= E_\alpha^0\paren{x_1,\ldots,x_n}\]
where persistent homology is taken with respect to the Vietoris--Rips complex.

 In 1988, Steele~\cite{1988steele} proved the following celebrated result.
\begin{theorem}[Steele]
\label{theorem_steele}
Let $\mu$ be a compactly supported probability measure on $\R^m,$ $m\geq 2,$ and let $\set{x_n}_{n\in\N}$ be i.i.d.~samples from $\mu.$ If $0<\alpha<m,$
\[\lim_{n\rightarrow\infty} n^{-\frac{m-\alpha}{m}} E_\alpha^0\paren{x_1,\ldots,x_n} \rightarrow c\paren{\alpha,m}\int_{\R^m}f\paren{x}^{\paren{m-\alpha}/m}\;dx\]
with probability one, where $f\paren{x}$ is the probability density of the absolutely continuous part of $\mu,$ and $c\paren{\alpha,m}$ is a positive constant that depends only on $\alpha$ and $m.$
\end{theorem}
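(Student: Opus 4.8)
The plan is to treat $E_\alpha^0$ as a \emph{subadditive Euclidean functional} in the spirit of Steele and Yukich, and to run the standard three-stage argument: first establish the structural scaling and quasi-subadditivity properties, then prove the uniform-sample limit, and finally transfer it to an arbitrary measure by a local approximation. Write $L(A):=E_\alpha^0(A)$ for a finite set $A\subset\R^m$. Two elementary observations get us started: $L$ is translation invariant, and it is homogeneous of order $\alpha$, i.e. $L\paren{\lambda A}=\lambda^\alpha L(A)$, since rescaling the point set by $\lambda$ leaves the combinatorial minimum spanning tree unchanged while multiplying every edge length by $\lambda$.

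The first substantive step is geometric subadditivity. Partitioning a cube of side $t$ into $\ell^m$ congruent subcubes $\set{Q_j}$, one may build a spanning tree of $A\cap[0,t]^m$ by taking an MST within each $Q_j$ and then adjoining at most $\ell^m-1$ edges, each of length at most the subcube diameter $\sqrt{m}\,t/\ell$, to join the pieces. This yields
\[
L\paren{A\cap[0,t]^m}\leq \sum_{j} L\paren{A\cap Q_j}+C_m\,t^\alpha \ell^{m-\alpha},
\]
and because $\alpha<m$ the error term is of strictly smaller order than the main term once $n$ points are inserted. A matching lower bound is more delicate, since the MST is not literally superadditive; I would introduce the associated \emph{boundary functional} $L_B$ (the MST weight computed with the subcube faces allowed as cost-free connection sites), verify that $L_B$ is superadditive and that $\abs{L-L_B}$ is negligible at leading order, as in Yukich's treatment of the MST.

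Granting these, the uniform case follows from the subadditive limit theorem: if $U_1,\ldots,U_n$ are i.i.d.\ uniform on $[0,1]^m$, then homogeneity together with the sub/superadditive bounds forces $\mathbb{E}\,L\paren{U_1,\ldots,U_n}/n^{(m-\alpha)/m}$ to converge to a finite positive constant $c(\alpha,m)$. To upgrade convergence in mean to almost-sure convergence I would invoke concentration: $L$ has bounded martingale differences, since inserting, deleting, or moving a single point changes $L$ by a controlled amount, so the Azuma--Hoeffding inequality gives exponential concentration about the mean and Borel--Cantelli yields the almost-sure limit.

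The final and, I expect, hardest stage is the passage to a general compactly supported $\mu=f\,dx+\mu_s$. Here I would tile the support by cubes of small side $s$ on which $f$ is nearly constant, $f\approx\rho$. Conditioned on the point counts, the samples in each cube are approximately i.i.d.\ uniform, so the uniform limit applies locally: a cube carrying $\approx n\rho s^m$ points contributes $\approx c(\alpha,m)\paren{n\rho s^m}^{(m-\alpha)/m}s^\alpha=c(\alpha,m)\,n^{(m-\alpha)/m}\rho^{(m-\alpha)/m}s^m$, and summing over cubes produces the Riemann sum for $c(\alpha,m)\,n^{(m-\alpha)/m}\int f^{(m-\alpha)/m}\,dx$. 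Two points demand genuine care. First, one must control the cross-boundary edges joining adjacent cubes and show they contribute only lower-order corrections, again leaning on subadditivity and $\alpha<m$. Second, and most subtly, one must show the singular part $\mu_s$ does not enter the limit: because $\mu_s$ lives on a Lebesgue-null set, its sample points are highly clustered and can be absorbed into the tree by short edges whose total $\alpha$-weight is $o\paren{n^{(m-\alpha)/m}}$ when $\alpha<m$. Making this estimate quantitative, uniformly over the unknown geometry of $\mu_s$, is the main obstacle; it is precisely the step that breaks down in the Ahlfors-regular setting and motivates the different techniques developed in the body of the paper.
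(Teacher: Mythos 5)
The paper does not prove Theorem~\ref{theorem_steele}: it is imported verbatim from Steele~\cite{1988steele} and used as motivation and as a black box, so your proposal can only be measured against Steele's published argument and its later systematization by Redmond and Yukich. Against that benchmark your skeleton is the correct one --- translation invariance and $\alpha$-homogeneity, geometric subadditivity over a cubical partition with patching cost $O\paren{t^\alpha\ell^{m-\alpha}}$, a boundary functional to supply approximate superadditivity, the subadditive limit theorem in the uniform case, and a Riemann-sum localization for a general density. These are exactly the ingredients of the standard proof, and you correctly flag the cross-boundary edges and the singular part as the delicate points.

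Two of your steps nevertheless contain genuine gaps. First, concentration: the \emph{worst-case} effect of inserting or deleting one point on the $\alpha$-weighted MST is of order $\mathrm{diam}^\alpha=O\paren{1}$ (delete a vertex of bounded degree and reconnect the pieces), so Azuma--Hoeffding with bounded differences gives fluctuations of order $\sqrt{n}$. The mean is $\Theta\paren{n^{(m-\alpha)/m}}$, so this beats the mean only when $\alpha<m/2$; for $m/2\leq\alpha<m$, which is inside the claimed range, your concentration argument collapses. Steele instead bounds the variance via the Efron--Stein/jackknife inequality using the \emph{typical} one-point increment (of order $n^{-\alpha/m}$, the nearest-neighbour distance raised to the $\alpha$), obtaining $\textit{Var}=O\paren{n^{1-2\alpha/m}}=o\paren{n^{2(m-\alpha)/m}}$, and then upgrades to almost-sure convergence by an additional subsequence/interpolation argument (alternatively one needs Rhee--Talagrand-type isoperimetric concentration). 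Second, the singular part: the crude deterministic bound (essentially Proposition~\ref{prop_upper_extremal_MST} with $d=m$, valid for any bounded subset of $\R^m$) shows the $\approx\mu_s\paren{\R^m}\,n$ samples from $\mu_s$ contribute at most $O\paren{n^{(m-\alpha)/m}}$ --- the \emph{same} order as the main term, not a lower one --- so ``the singular points are absorbed by short edges'' is not automatic and must be extracted from the fact that $\text{supp}\,\mu_s$ is Lebesgue-null. That estimate is a substantive lemma in Steele's paper (it is precisely the content of the remark quoted after the theorem statement here), and you name it as the main obstacle without supplying it. As it stands the proposal is an accurate roadmap of the known proof rather than a proof.
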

Steele wrote~\cite{1988steele}:
\begin{quote}
One feature of Theorem~\ref{theorem_steele} that should be noted is that if $\mu$ has bounded support and $\mu$ is singular with respect to Lebesgue measure, then we have with probability one that $E_\alpha^0\paren{x_1,\ldots,x_n}=o\paren{n^{\paren{d-\alpha}/d}}.$
Part of the appeal of this observation is the indication that the length of the minimum spanning tree is a measure of the \emph{dimension} of the support of the distribution. This suggests that the asymptotic behavior of the minimum spanning tree might be a useful adjunct to the concept of dimension in the modeling applications and analysis of fractals; see, e.g.,~\cite{1977mandelbrot}.
\end{quote}
 However, despite many subsequent sharper and more general results for non-singular measures on Euclidean space~\cite{1992aldous,1996kesten,2000yukich} and Riemannian manifolds~\cite{2004costa}, little is known about the asymptotic properties of random minimum spanning trees  for singular measures. As far as we know, the only previous result toward that end is that of Kozma, Lotker and Stupp~\cite{2010kozma}, who proved that if $\mu$ is a  $d$-Ahlfors regular measure with connected support, then the length of the longest edge of a minimum spanning tree on $n$ i.i.d.~points sampled from $\mu$ is $\approx \paren{\log\paren{n}/n}^{1/d},$ where the symbol $\approx$ denotes that the ratio between the two quantities is bounded between two positive constants that do not depend on $n.$ They also raised the possibility that analogous asymptotics hold for the alpha-weight of a minimum spanning tree, which we prove here in Theorem~\ref{thm_mst}.

More recently, as the field of stochastic topology has matured, several studies have examined the properties of the higher dimensional persistent homology of random geometric complexes defined by absolutely continuous measures on Euclidean space~\cite{2018bauer,2017bobrowski,2018bobrowski,2016duy,2017yogeshwaran}. Most relevantly, Divol and Polonik~\cite{2018divol} proved a strong law of large numbers akin to Steele's theorem for the persistent homology of points sampled from  bounded, absolutely continuous probability densities on $\brac{0,1}^m.$ In the non-persistent setting, several authors have investigated the homology of random geometric complexes on manifolds~\cite{2015bobrowski,2017bobrowski_b,2019kergorlay, 2008niyogi}. However, as far as we know, the current work is the first to study persistent homology of random geometric complexes beyond the world of absolutely continuous measures on $\mathbb{R}^m$ with convex support (with the exception of our unpublished manuscript~\cite{2018schweinhart_b}, which has largely been subsumed into the current work). With these broader hypotheses, we encounter difficult geometric issues related to non-locality and non-triviality of persistent homology, which we discuss below.

\begin{figure}
\centering
\includegraphics[width=.7\textwidth]{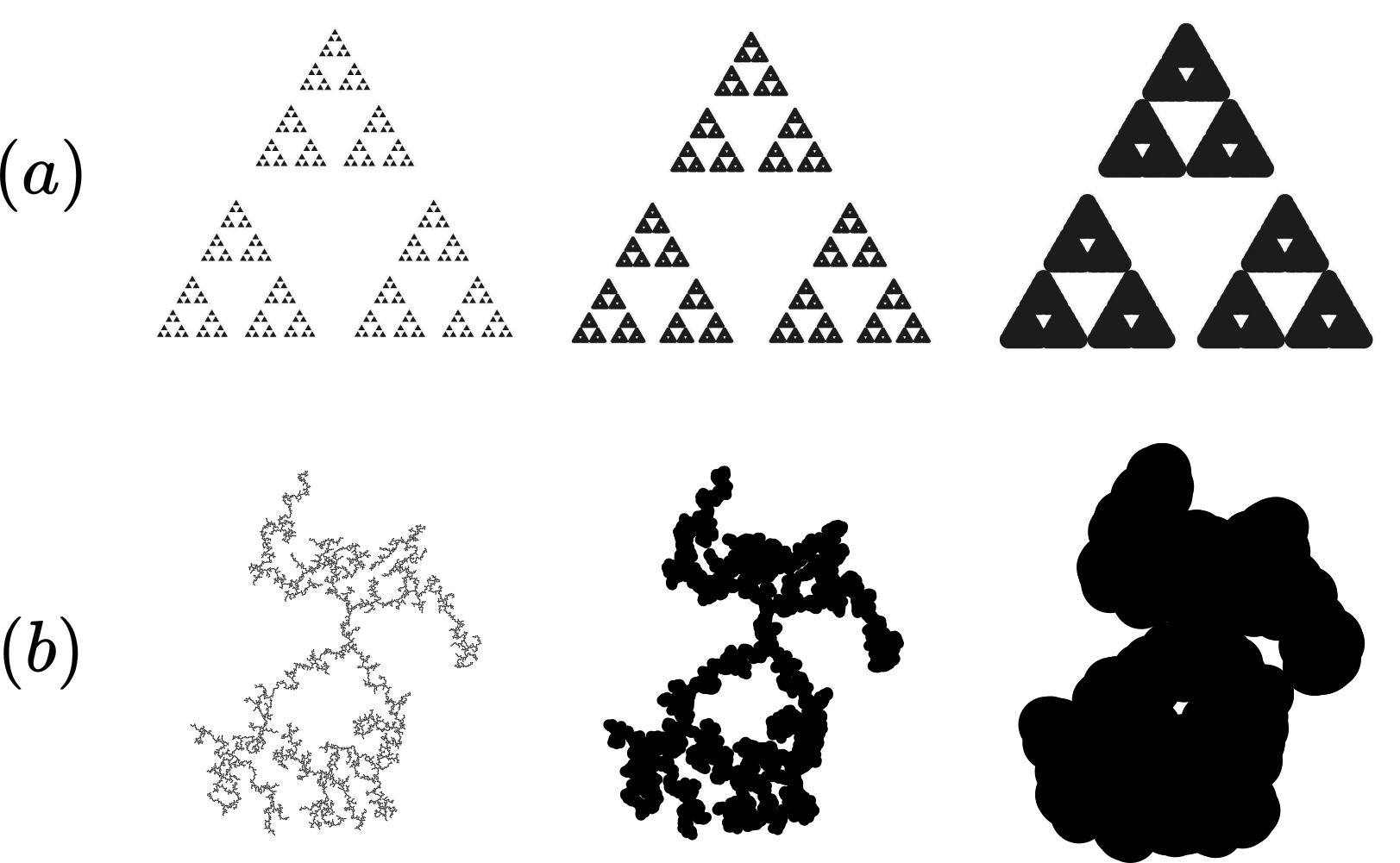}
\caption{\label{fig:epsilon_neighborhood} Two sets of fractional dimension, and their $\epsilon$-neighborhoods: (a) a modified Sierpi\'{n}ski triangle and (b) a branched polymer. Their complex geometry is reflected by their persistent homology.}
\label{fig:triangles}
\end{figure}

A relationship between persistent homology and fractal dimension has been observed in several experimental studies. In 1991, Weygaert, Jones, and Martinez~\cite{1992weygaert} proposed using the asymptotics of $E_\alpha^0\paren{x_1,\ldots,x_n}$ to estimate the generalized Hausdorff dimensions of chaotic attractors. The PhD thesis of Robins, which was arguably one of the first publications in the field of topological data analysis, studied the scaling of Betti numbers of fractals and proved results for the $0$-dimensional persistent homology of disconnected sets~\cite{2000robins}. In joint work with Robert MacPherson, we proposed a dimension for probability distributions of geometric objects based on persistent homology in 2012~\cite{2012macpherson}. Note that the quantities studied in that paper and in the thesis of Robins measure the complexity of a shape rather than the fractional dimension. Most recently, Adams et al.~\cite{2019adams} defined a persistent homology dimension for measures in terms of the asymptotics of $E_i^1\paren{x_1,\ldots,x_n}$. Their computational experiments helped to inspire this work. We study a modified version of their dimension here (Definition~\ref{defn_phdim}), and find hypotheses under which it agrees with the Ahlfors dimension (Corollary~\ref{thm_dimension}). 

In the extremal setting, Kozma, Lotker and Stupp~\cite{2006kozma} defined a minimum spanning tree dimension for a metric space $M$ in terms of the behavior of $E_\alpha^0\paren{Y}$ as $Y$ ranges over all subsets of $M,$ and proved that it equals the upper box dimension. In 2018, we generalized this concept to higher dimensional persistent homology and established hypotheses under which it agrees with the upper box dimension~\cite{2018schweinhart}. In the course of this work, we investigated extremal questions about the number of persistent homology intervals of a set of $n$ points; these questions are also important in the probabilistic context, as we describe below.

\subsection{Our Results for Minimum Spanning Trees}
Our main result is :
\begin{theorem}
\label{thm_mst}
Let $\mu$ be a $d$-Ahlfors regular measure on a metric space, and let $\set{x_n}_{n\in\N}$ be i.i.d.~samples from $\mu.$ If $0<\alpha<d,$ then
\[E^0_\alpha\paren{x_1,\ldots,x_n}\approx n^{\frac{d-\alpha}{d}}\]
with high probability as $n\rightarrow \infty,$ where the symbol $\approx$ denotes that the ratio of the two quantities is bounded between positive constants that do not depend on $n.$
\end{theorem}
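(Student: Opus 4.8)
The plan is to reduce everything to the number of connected components of the neighborhood graph at each scale. For $r>0$ let $G_r$ be the graph on $\set{x_1,\ldots,x_n}$ that joins two points whenever their distance is less than $r$, and let $N(r)$ be its number of connected components. Running Kruskal's algorithm shows that the number of minimum spanning tree edges of length at least $r$ is exactly $N(r)-1$: after all edges shorter than $r$ have been inserted, the components of the partial forest coincide with those of $G_r$, and $N(r)-1$ further (longer) edges are needed to finish the tree. Writing $\abs{e}^\alpha=\int_0^{\abs{e}}\alpha r^{\alpha-1}\,dr$ and interchanging sum and integral (Tonelli) then gives the exact identity
\[E^0_\alpha\paren{x_1,\ldots,x_n}=\int_0^\infty \alpha r^{\alpha-1}\paren{N(r)-1}\,dr\,.\]
Both bounds follow from matching estimates on $N(r)$, and the exponent $n^{(d-\alpha)/d}$ emerges from the crossover of the two regimes of $N$ at the typical interpoint scale $r\approx n^{-1/d}$.

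For the upper bound I would argue deterministically. Ahlfors regularity implies that $\operatorname{supp}\mu$ is totally bounded (hence of finite diameter $D$) and that, for every $r<2\delta_0$, it is covered by at most $Cr^{-d}$ balls of radius $r/2$: a maximal $\paren{r/2}$-separated set has disjoint $\paren{r/4}$-balls, each of measure at least $c^{-1}\paren{r/4}^d$, so it has at most $Cr^{-d}$ elements. Points in a common ball of radius $r/2$ are adjacent in $G_r$, so $N(r)\leq\min\paren{n,\,Cr^{-d}}$ for all $r<2\delta_0$, while $N(r)$ is a fixed constant on $\paren{2\delta_0,D}$ and equals $1$ for $r>D$. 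Substituting into the identity and splitting the integral at $r=n^{-1/d}$ gives, using $0<\alpha<d$,
\[E^0_\alpha\leq \int_0^{n^{-1/d}}\!\alpha r^{\alpha-1} n\,dr+\int_{n^{-1/d}}^{2\delta_0}\!\alpha C r^{\alpha-1-d}\,dr+O(1)\lesssim n^{(d-\alpha)/d}\,,\]
which holds surely (given that the sample lies in $\operatorname{supp}\mu$), with $C_2$ depending only on $c,d,\alpha,\delta_0,D$.

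For the lower bound it suffices to show $N$ is large at a single small scale. Fix $\rho=\epsilon\,n^{-1/d}$; since $N$ is nonincreasing, $N(r)\geq N(\rho)$ for $r\leq\rho$, and the identity yields
\[E^0_\alpha\geq \int_0^{\rho}\alpha r^{\alpha-1}\paren{N(r)-1}\,dr\geq \paren{N(\rho)-1}\rho^\alpha\,.\]
Hence I only need $N(\rho)\geq cn$ with high probability, which gives $E^0_\alpha\gtrsim n\cdot\paren{\epsilon n^{-1/d}}^\alpha=C_1 n^{(d-\alpha)/d}$. Using the elementary bound $N(\rho)\geq n-\abs{E\paren{G_\rho}}$ (each edge lowers the component count by at most one), it is enough to control the number of short pairs. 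By Ahlfors regularity $\mathbb{E}\,\abs{E\paren{G_\rho}}=\binom{n}{2}\int_X\mu\paren{B_\rho(x)}\,d\mu(x)\leq\tfrac12 c\,\epsilon^d n$, so choosing $\epsilon$ small forces the mean below $n/4$; and since $\abs{E\paren{G_\rho}}=\#\set{(i,j):i<j,\ x_j\in B_\rho(x_i)}$ is a bounded $U$-statistic, a routine second-moment computation using $\mu\paren{B_\rho(x)}\leq c\rho^d$ gives $\mathrm{Var}\,\abs{E\paren{G_\rho}}=O(n)$, so Chebyshev's inequality yields $\abs{E\paren{G_\rho}}\leq n/2$ and thus $N(\rho)\geq n/2$ with probability tending to one.

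The main obstacle is precisely this concentration step. The identity and the covering upper bound are robust and deterministic, and the real content is ruling out the possibility that the random sample clusters enough to collapse many components at scale $\rho$. Ahlfors regularity is exactly what prevents this, by capping the expected number of near-neighbor pairs, and the second-moment estimate (equivalently, a lower bound on the count of scale-$\rho$ isolated points) converts that expectation bound into a high-probability statement. I expect the only genuinely technical points to be the bookkeeping in the variance estimate and the uniform treatment of the scales $r\in\paren{2\delta_0,D}$ in the upper bound.
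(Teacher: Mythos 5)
Your proof is correct, and it splits cleanly into a half that coincides with the paper and a half that does not. The upper bound is essentially the paper's own argument: your identity ``number of MST edges of length at least $r$ equals the number of components of $G_r$ minus one'' is Lemma~\ref{MST_lemma}, your covering bound $N(r)\leq \min\paren{n,Cr^{-d}}$ is Lemma~\ref{edge_counting_lemma} combined with the ball-counting Lemma~\ref{lemma_ballcount}, and the integral representation with the crossover at $r\approx n^{-1/d}$ is Lemma~\ref{integral_lemma} and Proposition~\ref{prop_upper_extremal_MST}. The lower bound is where you genuinely diverge. The paper takes a maximal family of disjoint balls of radius $2n^{-1/d}$ and defines occupancy indicators requiring the inner ball of radius $n^{-1/d}$ to be hit while the surrounding annulus is empty; each such event certifies an isolated component of the neighborhood graph, and a purpose-built weak law of large numbers for disjoint occupancy indicators (Lemmas~\ref{lemma_occupancy_0} and~\ref{lemma_occupancy_2}) yields $p\paren{\textbf{x}_n,n^{-1/d}}\geq \gamma n$ with high probability. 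You instead use the elementary inequality $N(\rho)\geq n-\abs{E\paren{G_\rho}}$ and control the number of $\rho$-close pairs by a first- and second-moment computation; this is sound, since only the $O(n^3)$ index-sharing pairs contribute to the variance, each with covariance $O\paren{n^{-2}}$ by the bound $\mu\paren{B_\rho(x)}\leq c\rho^d$, so $\mathrm{Var}=O(n)$ and Chebyshev applies. Your route is more elementary and, notably, uses only the \emph{upper} Ahlfors bound for the lower bound on $E^0_\alpha$, whereas the paper's occupancy events also invoke the lower bound to guarantee the inner ball is occupied with the right probability. What the paper's heavier machinery buys is reusability: the same occupancy-indicator scheme, with the ball-and-annulus configuration replaced by arrangements of sub-cubes carrying stable $\PH_i$ classes, drives the lower bounds for higher-dimensional persistent homology in Lemma~\ref{lemma_lower_2}, where there is no analogue of bounding components by subtracting an edge count.
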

We provide a proof of this result using the language of minimum spanning trees (rather than persistent homology) in Section~\ref{sec_MST}. The special case where $\mu$ is a measure on Euclidean space is also a consequence of either Theorems~\ref{thm_extremal} or~\ref{thm_probabalistic} below. 

The hypotheses we require to prove Theorem~\ref{thm_mst} and our other results below are somewhat weaker than Ahlfors regularity. In particular, the proofs of our upper bounds only require that $M_{\delta}\paren{\mu}=O\paren{\delta^{-d}},$ where $M_{\delta}\paren{\mu}$ is the maximal number of disjoint balls of radius $\delta$ centered at points of $\text{supp}\;\mu.$ Also, the proofs of our lower bounds require that the uniform bounds in Equation~\ref{ahlfors_equation} are satisfied on a set of positive measure, but not necessarily at every point in the support of $\mu.$ However, a regularity hypothesis on the underlying measure is necessary. Some definitions of fractals include the chaotic attractors studied in Section~4 of our computational paper~\cite{2019jaquette}. Our computations suggest that for several examples and each $\alpha>0$ there is a different value of $d_\alpha$ so that $E^0_\alpha\paren{y_1,\ldots,y_n}\approx n^{\frac{d_\alpha-\alpha}{d_\alpha}}$ (i.e. that the measure is ``multifractal''). In particular, we could not replace $d$ in the previous theorem with, say, the upper box or Hausdorff dimension of the support. 

\begin{figure}
\centering
\includegraphics[width=\textwidth]{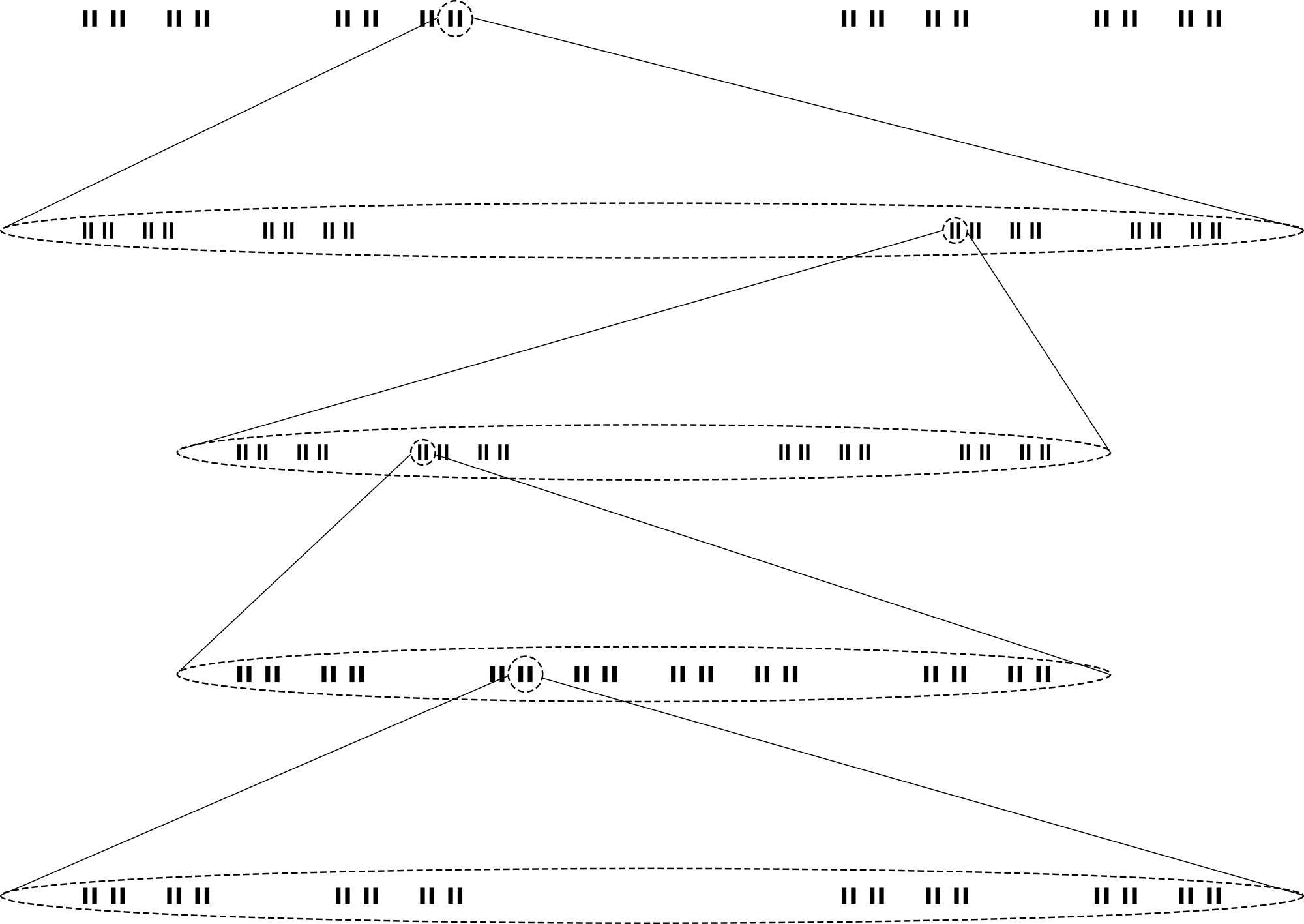}
\caption{A rough depiction of the construction of Counterexample~\ref{prop:noLimit}, where the size of each row is shown relative to the standard middle thirds Cantor set $C$ at the same scale. At larger scales, it resembles $C$ (first row). At a smaller scale, we shrink the pieces of the set by $\frac{5}{7}$ relative to $C$ (second row) resulting in structure that resembles $C$ rescaled by $\frac{5}{7}$ when zooming in further (third row). Even smaller, we re-expand pieces of the structure by $\frac{7}{5}$ relative to $C$ (fourth row) so that at finer scales it resembles $C$ again (fifth row). We repeat this process, resulting in a structure that alternates between resembling $C$ and $\frac{5}{7}C$ at different length-scales.}
\label{fig:CantorConstruction}
\end{figure}

Our next result shows that a sharper law of large numbers as in Theorem~\ref{theorem_steele} is false in general for Ahlfors regular measures. 
\begin{counterexample}
\label{prop:noLimit}
Let $d=\log\paren{2}/\log\paren{3}$ and $0<\alpha<d.$ There exists a $d$-Ahlfors regular measure $\mu$ on $\brac{0,1}$ so that $\lim_{n\rightarrow\infty}n^{-\frac{d-\alpha}{d}} E_\alpha^0\paren{x_1,\ldots,x_n}$ does not converge in probability.
\end{counterexample}
In particular, we construct an example of a $d$-Ahlfors regular measure where \newline $n^{-\frac{d-\alpha}{d}} E_\alpha^0\paren{x_1,\ldots,x_n}$ oscillates between two positive real numbers with high probability. We provide a brief description of the counterexample here, and a complete proof in \ref{sec:noLimit}. The construction can easily be modified to produce a counterexample of of dimension $d$ for any $d\in\paren{0,1}$ as described at the end of \ref{sec:noLimit}, but we concentrate on the case $d=\log\paren{2}/\log\paren{3}$ for clarity.

Recall that the standard middle-thirds Cantor set $C$ is constructed as the intersection of a nested sequence of closed sets $T_1\supset T_2 \supset\ldots,$ where $T_k$ consists of $2^k$ disjoint intervals of length $\paren{\frac{1}{3}}^k$. Our counterexample will resemble $C$ at some scales, and $C$ rescaled by $\frac{5}{7}$ at others (see Figure~\ref{fig:CantorConstruction}). It will be supported on another nested sequence of of closed sets $S_1\supset S_2 \ldots.$ To construct the counterexample, we suppose that $\lim_{n\rightarrow\infty}n^{-\frac{d-\alpha}{d}} E_\alpha^0\paren{y_1,\ldots,y_n}\coloneqq c$ exists in probability, where $\set{y_j}_{j\in\mathbb{N}}$ are i.i.d. samples from the natural measure on $C.$ We set $S_k=T_k$ for all sufficiently small $k$ which ensures that, to a certain point, $n^{-\frac{d-\alpha}{d}} E_\alpha^0\paren{x_1,\ldots,x_n}$ also approaches $c.$ At smaller length-scales $S_k$ instead consists of $2^k$ intervals of length $\frac{5}{7} \paren{\frac{1}{3}}^k,$ and we will show that  $n^{-\frac{d-\alpha}{d}} E_\alpha^0\paren{x_1,\ldots,x_n}$ will dip down toward $\paren{\frac{5}{7}}^\alpha c.$ We repeat this inductively, resulting in quantities that oscillate between $c$ and $\paren{\frac{5}{7}}^\alpha c.$

\subsection{Our Results for Higher Dimensional Persistent Homology}

As we noted in our earlier paper~\cite{2018schweinhart}, proving asymptotic results for higher dimensional persistent homology is challenging due to extremal questions about the number of persistent homology intervals of a finite point set. While a minimum spanning tree on $n$ points always has $n-1$ edges, a set of $n$ points may have trivial $\PH_i$ for all $i>0,$ and there exist families of finite metric spaces for which the number of persistent homology intervals grows faster than linearly in the number of points.

To prove upper bounds for the asymptotics of $E_\alpha^i$ for $i>0,$ we require either extremal or probabilistic control of the number of persistent homology intervals of a set of $n$ points. Families of point sets in Euclidean space with more than a linear number of persistent homology intervals exist~\cite{2011goff,2018schweinhart}, but are considered somewhat pathological. As far as we know, the Upper Bound Theorem~\cite{1975stanley} on the number of faces of a neighborly polytope provides the best extremal upper bound for the number of persistent homology intervals of the \v{C}ech complex of a finite subset of $\R^m$:
\[\abs{\PH_i\paren{x_1,\ldots,x_n}} =\begin{cases}
O\paren{n^{i+1}} & i<\floor{\frac{m}{2}}\\
O\paren{n^{\floor{\frac{m+1}{2}}}} & i\geq \floor{\frac{m}{2}}
\end{cases} \]
For the Vietoris--Rips complex of points in Euclidean space, we~\cite{2018schweinhart} showed that 
\[\abs{\PH_1\paren{x_1,\ldots,x_n}}= O\paren{n}\]
by modifying an argument of Goff~\cite{2011goff}.
A different extremal question arises in the process of proving lower bounds for $E_{\alpha}^{i}.$ In particular, a subset $\R^m$ must have dimension above a certain non-triviality constant $\gamma_i^m$ (defined in Section~\ref{sec_nontriviality}) to guarantee the existence of subsets with non-trivial $i$-dimensional persistent homology. Note that $\gamma_i^m$ may depend on whether persistent homology is taken with respect to the \v{C}ech complex or Vietoris--Rips complex. Unless otherwise noted, that dependence is left implicit. We showed that  $\gamma_1^m<m-1/2$ for the \v{C}ech complex in our previous paper~\cite{2018schweinhart}.

The proofs of the upper bounds in the next two theorems work for Ahlfors regular measures on arbitrary metric spaces, but the lower bound requires that the measure is defined on a subset of Euclidean space.
\begin{theorem}
\label{thm_extremal}
Let $\mu$ be a $d$-Ahlfors regular measure on $\mathbb{R}^m$ with $d>\gamma_i^m$, and let $\set{x_n}_{n\in\N}$ be i.i.d.~samples from $\mu.$  If there are positive real numbers $D$ and $a$ so that 
\[\abs{\PH_i\paren{x_1,\ldots,x_n}}<D n^a\]
for all finite subsets of $\text{supp}\;\mu,$ and $0<\alpha<ad,$ then there are real numbers $0<\zeta<Z$ so that

\[\zeta\, n^{\frac{d-\alpha}{d}} \leq E_{\alpha}^{i}\paren{x_1,\ldots,x_n}\leq Z\,n^{\frac{a d-\alpha}{d}}\]
with high probability, as $n\rightarrow \infty.$ In fact, the upper bound holds with probability one. 
\end{theorem}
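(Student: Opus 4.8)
The plan is to prove the two inequalities by completely different means: the upper bound is a deterministic consequence of the covering properties of $\text{supp}\,\mu$, the stability of persistent homology, and the extremal hypothesis $\abs{\PH_i}<Dn^a$, whereas the lower bound is genuinely probabilistic and is where the non-triviality constant $\gamma_i^m$ and the Euclidean hypothesis enter.

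For the upper bound I would count long intervals by a sparsification argument. Fix a length scale $\ell>0$ and let $Y_\ell$ be a maximal $\ell/2$-separated subset of $\set{x_1,\dots,x_n}$, so that $Y_\ell$ is an $\ell/2$-net and $Y_\ell\subseteq\text{supp}\,\mu$. Since $\text{supp}\,\mu$ can be covered by $O\paren{\ell^{-d}}$ balls of radius $\ell/2$ and separation forces each such ball to contain $O\paren{1}$ points of $Y_\ell$, together with $\abs{Y_\ell}\le n$ this gives $\abs{Y_\ell}=O\paren{\min\paren{\ell^{-d},n}}$. By the stability of persistence diagrams, replacing $\set{x_1,\dots,x_n}$ by $Y_\ell$ moves every endpoint by at most $O\paren{\ell}$, so each interval of length at least $\ell$ must be matched to (hence injects into) an interval of $\PH_i\paren{Y_\ell}$; the extremal hypothesis applied to the finite set $Y_\ell$ then yields
\[
\#\set{I : \abs{I}\ge\ell}\;\lesssim\;\abs{\PH_i\paren{Y_\ell}}\;<\;D\,\abs{Y_\ell}^a\;=\;O\paren{\min\paren{\ell^{-d},n}^a}\,.
\]
Writing $E_\alpha^i=\alpha\int_0^\infty \ell^{\alpha-1}\,\#\set{I:\abs{I}\ge\ell}\,d\ell$ and splitting the integral at $\ell=n^{-1/d}$, the hypothesis $\alpha<ad$ makes both pieces converge and each evaluates to $O\paren{n^{\paren{ad-\alpha}/d}}$. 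As every step is a deterministic property of the configuration $\set{x_1,\dots,x_n}\subseteq\text{supp}\,\mu$, this bound holds with probability one.

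For the lower bound it suffices to exhibit $\gtrsim n$ intervals of length $\gtrsim n^{-1/d}$, since then $E_\alpha^i\gtrsim n\cdot\paren{n^{-1/d}}^\alpha=n^{\paren{d-\alpha}/d}$. I would first pass to a positive-measure subset $A$ on which the two-sided bound of Equation~\ref{ahlfors_equation} holds uniformly, set $r=\paren{\kappa/n}^{1/d}$, and choose centers $c_j\in A$ pairwise separated by at least $6r$, so that the balls $B_{3r}\paren{c_j}$ are pairwise disjoint; Ahlfors regularity guarantees $\Theta\paren{r^{-d}}=\Theta\paren{n}$ of them. Because $d>\gamma_i^m$, the rescaled restriction of $\mu$ to each $B_r\paren{c_j}$ is approximately a $d$-Ahlfors regular measure on $\R^m$ above the non-triviality threshold, so there is a constant $p_0>0$, independent of $n$, bounding below the probability of the event $A_j$ that the samples inside $B_r\paren{c_j}$ realize a configuration with a non-trivial $\PH_i$ interval of length $\gtrsim r$ while the surrounding annulus $B_{3r}\paren{c_j}\setminus B_r\paren{c_j}$ is empty of sample points. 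The empty annulus isolates the feature (no edges below scale $2r$ join $B_r\paren{c_j}$ to any other point), so at small scales the \v{C}ech or Vietoris--Rips complex splits as a disjoint union over the balls and each local interval genuinely appears in the global diagram $\PH_i\paren{x_1,\dots,x_n}$. Letting $W=\sum_j \mathbf{1}_{A_j}$, linearity gives $\mathbb{E}\brac{W}\ge p_0\,\Theta\paren{n}$, and a bounded-differences (McDiarmid) inequality — a single sample point lies in at most one disjoint region $B_{3r}\paren{c_j}$, so moving it changes $W$ by $O\paren{1}$ — forces $W\gtrsim n$ with high probability, completing the lower bound.

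The covering estimate, the stability/matching step, and the concentration inequality are routine. The main obstacle is the probabilistic realization step in the lower bound: turning the bare inequality $d>\gamma_i^m$ into a quantitative, scale-invariant statement that within a ball of radius $r$ the restricted measure places, with probability bounded away from $0$ uniformly in $n$, finitely many points in the prescribed relative positions needed to create a \emph{robust} $i$-dimensional cycle whose interval has length proportional to $r$. This is precisely the non-locality and non-triviality difficulty flagged in the introduction, and it is what forces both the Euclidean hypothesis and the appearance of $\gamma_i^m$; I expect it to rest on the finite-subset existence result underlying the definition of $\gamma_i^m$ together with the self-similarity of the Ahlfors condition under rescaling.
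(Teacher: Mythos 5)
Your upper bound is essentially the paper's own argument: a maximal $\ell/2$-separated net, the ball-counting bound $M_{\ell/2}=O\paren{\ell^{-d}}$ from Ahlfors regularity, bottleneck stability to inject the intervals of length $\geq\ell$ into $\PH_i$ of the net, the extremal hypothesis applied to the net, and then the Cohen--Steiner integration identity split at $\ell=n^{-1/d}$ (this is Lemmas~\ref{interval_counting_lemma} and~\ref{integral_lemma} and Proposition~\ref{prop_upper_extremal}). That part is complete and correct, and you are right that it is deterministic and hence holds with probability one.

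The lower bound, however, has a genuine gap, and it is exactly the step you flag yourself: you assert that because $d>\gamma_i^m$, each ball $B_r\paren{c_j}$ supports an event $A_j$ of probability $p_0>0$, uniform in $n$, forcing a $\PH_i$ interval of length $\gtrsim r$ --- but you give no mechanism for producing such an event, and the heuristic you offer (``the rescaled restriction of $\mu$ to $B_r\paren{c_j}$ is approximately a $d$-Ahlfors regular measure above the non-triviality threshold'') does not connect to the definition of $\gamma_i^m$, which is a discrete extremal quantity about subsets of the integer lattice $\set{1,\ldots,N}^m$, not about measures. The paper closes this in two steps that your sketch is missing. First, the notion of a \emph{stable} $\PH_i$ class (Definition~\ref{defn_nontrivial}): a designated family of sub-cubes such that \emph{any} choice of one point per sub-cube yields an interval of length bounded below; this is what converts ``realize a robust cycle'' into the finite intersection of events ``sub-cube $B$ is occupied,'' each of measure $\geq p/n$, so the occupancy event really does have probability bounded away from zero (Lemma~\ref{lemma_occupancy_2}). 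Second, one must show such a family of sub-cubes \emph{exists} inside the region: subdividing each cube of width $n^{-1/d}$ into $k^m$ sub-cubes, Lemma~\ref{lemma_lower_1} uses a global counting argument to show a positive fraction of the cubes contain more than $k^\beta>\xi_i^m\paren{k}$ sub-cubes of measure $\geq\hat{c}\paren{\delta/k}^d$, and only then does $d>\gamma_i^m$ (via Definition~\ref{defn_gamma}) guarantee a stable class among them. Note this is \emph{not} automatic for every ball centered at a support point --- the measure inside $B_r\paren{c_j}$ could a priori concentrate on too few sub-cubes --- which is why the paper settles for a positive fraction of cubes rather than all of them. Your isolation-by-empty-annulus step and the bounded-differences concentration for $W=\sum_j\mathbf{1}_{A_j}$ are fine (the latter is arguably cleaner than the paper's Chebyshev computation in Lemma~\ref{lemma_occupancy_0}), but without the stable-class construction the expectation bound $\mathbb{E}\brac{W}\geq p_0\,\Theta\paren{n}$ is unsupported, and that is the heart of the theorem.
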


The upper bound is shown in Proposition~\ref{prop_upper_extremal}, and the lower bound in Proposition~\ref{prop_lower}. The following is a corollary, using our previous results on $\gamma_1^2$ and the fact that the Alpha complex of a set of $n$ points in $\mathbb{R}^2$ has $O\paren{n}$ faces.

\begin{corollary}
\label{corollary_R2}
Let $\mu$ be a $d$-Ahlfors regular measure on $\mathbb{R}^2$, and let $\set{x_n}_{n\in\N}$ be i.i.d.~samples from $\mu.$ If $0<\alpha<d.$ If $d>1.5,$ $0<\alpha<d,$ and persistent homology is taken of the \v{C}ech complex, then 
\[ E_{\alpha}^{1}\paren{x_1,\ldots,x_n} \approx n^{\frac{d-\alpha}{d}}\]
with high probability as $n\rightarrow \infty.$ In fact, the upper bound holds with probability one. 
\end{corollary}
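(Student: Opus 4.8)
The plan is to derive Corollary~\ref{corollary_R2} as a special case of Theorem~\ref{thm_extremal} by verifying its hypotheses for $i=1$, $m=2$, and then checking that the upper and lower exponents coincide. The key observation is that for a $d$-Ahlfors regular measure on $\R^2$ with $d>1.5$, we have $d>\gamma_1^2$ by the cited result $\gamma_1^2<m-1/2=3/2$ for the \v{C}ech complex, so the non-triviality requirement of Theorem~\ref{thm_extremal} is satisfied. This is what forces the hypothesis $d>1.5$.

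The heart of the argument is to establish the extremal bound $\abs{\PH_1\paren{x_1,\ldots,x_n}}<Dn^a$ with the linear exponent $a=1$. Here I would invoke the fact that for points in $\R^2$ the \v{C}ech complex (at each scale) is homotopy equivalent to the corresponding Alpha complex, which is a subcomplex of the Delaunay triangulation and hence has only $O\paren{n}$ faces. Since the number of $\PH_1$ intervals is bounded by the total number of simplices appearing across the filtration, and each simplex of the Alpha complex enters exactly once, this yields $\abs{\PH_1\paren{x_1,\ldots,x_n}}=O\paren{n}$, so we may take $a=1$ and some constant $D$ uniformly over all finite subsets of $\text{supp}\;\mu$. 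I would need to be slightly careful that the relevant homotopy equivalence and the face-count bound are stated for the \v{C}ech complex rather than Vietoris--Rips, which is why the corollary specifies the \v{C}ech complex.

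With $a=1$ in hand, the condition $0<\alpha<ad$ from Theorem~\ref{thm_extremal} becomes exactly $0<\alpha<d$, matching the corollary's hypothesis. Plugging $a=1$ into the conclusion of the theorem, the lower bound is $\zeta\,n^{\frac{d-\alpha}{d}}$ and the upper bound is $Z\,n^{\frac{ad-\alpha}{d}}=Z\,n^{\frac{d-\alpha}{d}}$, so the two exponents collapse to the single exponent $\frac{d-\alpha}{d}$. This gives $E_\alpha^1\paren{x_1,\ldots,x_n}\approx n^{\frac{d-\alpha}{d}}$ with high probability, and the upper bound inherits the ``probability one'' strengthening directly from Theorem~\ref{thm_extremal}.

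The main obstacle I anticipate is not the probabilistic estimate (which is entirely delegated to Theorem~\ref{thm_extremal}) but rather pinning down the linear extremal bound $\abs{\PH_1}=O\paren{n}$ cleanly for the \v{C}ech complex in $\R^2$. One must ensure that passing through the Alpha complex preserves the persistence module in dimension $1$ and that the $O\paren{n}$ face count controls the interval count with a constant independent of the point configuration; this is the one place where genuine geometric input (planarity of the Delaunay triangulation, Euler's formula) enters, and where an incautious argument could accidentally rely on general position or on Vietoris--Rips rather than \v{C}ech. Everything else is bookkeeping that matches the two displayed exponents.
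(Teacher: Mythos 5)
Your proposal matches the paper's own justification, which is exactly the one-line remark that Corollary~\ref{corollary_R2} follows from Theorem~\ref{thm_extremal} with $a=1$, using $\gamma_1^2\leq 3/2$ for the \v{C}ech complex and the $O\paren{n}$ face count of the planar Alpha/Delaunay complex to bound $\abs{\PH_1}$. The only difference is that you spell out the bookkeeping (the collapse of the exponents and the \v{C}ech--Alpha equivalence) that the paper leaves implicit; there is no substantive divergence.
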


Another corollary, based on our results on the Rips complex~\cite{2018schweinhart}, is
\begin{corollary}
\label{corollary_rips}
Let $\mu$ be a $d$-Ahlfors regular measure on $\mathbb{R}^m$, and let $\set{x_n}_{n\in\N}$ be i.i.d.~samples from $\mu.$ If persistent homology is taken of the Rips complex, $d>\gamma_1^m,$ and $0<\alpha<d,$ then 
\[ E_{\alpha}^{1}\paren{x_1,\ldots,x_n} \approx n^{\frac{d-\alpha}{d}}\]
with high probability as $n\rightarrow \infty.$ In fact, the upper bound holds with probability one. 
\end{corollary}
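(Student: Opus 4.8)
The plan is to deduce this directly from Theorem~\ref{thm_extremal} by specializing to $i=1$ with the exponent $a=1$. The only ingredient needed beyond that theorem is an extremal bound on the number of one-dimensional persistent homology intervals of the Vietoris--Rips complex of a finite point set in Euclidean space. By the result of~\cite{2018schweinhart} (obtained by modifying Goff's argument~\cite{2011goff}), there is a constant $D$ so that $\abs{\PH_1\paren{y_1,\ldots,y_k}}<D\,k$ for \emph{every} finite subset $\set{y_1,\ldots,y_k}\subset\R^m$; in particular this holds for every finite subset of $\text{supp}\,\mu$. Thus the defining hypothesis of Theorem~\ref{thm_extremal} is met with $i=1$, $a=1$, and this value of $D$.

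First I would verify that the remaining hypotheses of Theorem~\ref{thm_extremal} align with those of the corollary. The assumption $d>\gamma_1^m$ is common to both (with $\gamma_1^m$ interpreted as the Vietoris--Rips non-triviality constant, consistently with the complex being used here), and with $a=1$ the constraint $0<\alpha<a d$ becomes exactly $0<\alpha<d$. Feeding $a=1$ into the conclusion of Theorem~\ref{thm_extremal}, the two exponents coincide, $\frac{a d-\alpha}{d}=\frac{d-\alpha}{d}$, so the double inequality collapses to
\[\zeta\, n^{\frac{d-\alpha}{d}} \leq E_{\alpha}^{1}\paren{x_1,\ldots,x_n} \leq Z\, n^{\frac{d-\alpha}{d}}\]
with high probability, which is precisely the assertion $E_\alpha^1\approx n^{\paren{d-\alpha}/d}$. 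The ``probability one'' refinement for the upper bound transfers verbatim from Theorem~\ref{thm_extremal}.

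Since this is a pure specialization, I do not expect a genuine analytic obstacle: the substantive work has already been carried out in establishing Theorem~\ref{thm_extremal} together with the linear extremal bound. The one point that requires care is precisely that the count $\abs{\PH_1}$ must be controlled \emph{deterministically} for all finite subsets of $\text{supp}\,\mu$, not merely for typical samples. This is why the worst-case extremal estimate of~\cite{2018schweinhart}, rather than a probabilistic one, is the correct input, and it is also what allows the upper bound to be upgraded to hold with probability one.
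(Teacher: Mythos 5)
Your proposal is correct and is exactly the paper's intended argument: the corollary is stated as an immediate specialization of Theorem~\ref{thm_extremal} with $i=1$ and $a=1$, using the linear extremal bound $\abs{\PH_1}=O(n)$ for the Vietoris--Rips complex in Euclidean space from~\cite{2018schweinhart}. Your observation that the bound must be deterministic over all finite subsets of $\text{supp}\,\mu$ (which the worst-case extremal estimate supplies) is precisely the point that makes the upper bound hold with probability one.
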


For $i>0$ and $m>2,$ we show better upper bounds for $d$-Ahlfors regular measures for which the expectation and variance of $\abs{\PH_i\paren{x_1,\ldots,x_n}}$ scale linearly and sub-quadratically, respectively. These quantities can be measured in practice, allowing one to determine whether higher dimensional persistent homology would be suitable for dimension estimation in applications.

\begin{theorem}
\label{thm_probabalistic}

Let $\mu$ be a $d$-Ahlfors regular measure on $\mathbb{R}^m$ so that $d>\gamma_i^m,$ and let $\set{x_n}_{n\in\N}$ be i.i.d.~samples from $\mu.$ If

\[\mathbb{E}\paren{\abs{\PH_i\paren{x_1,\ldots,x_n}}}=O\paren{n} \quad \text{and}\quad \textit{Var}\paren{\abs{\PH_i\paren{x_1,\ldots,x_n}}}/n^2\rightarrow 0\]
and $0<\alpha<d,$ then there are real numbers $0<\lambda<\Lambda$ so that
\[\lambda\,n^{\frac{d-\alpha}{d}} \leq E_{\alpha}^{i}\paren{x_1,\ldots,x_n}\leq \Lambda \, n^{\frac{d-\alpha}{d}}\log\paren{n}^{\frac{\alpha}{d}}\]
with high probability, as $n\rightarrow \infty.$ 
\end{theorem}

The upper and lower bounds are shown in Propositions~\ref{prop_upper_prob} and~\ref{prop_lower}, respectively. Many of our other results can be viewed as special cases of this theorem, including Corollaries~\ref{corollary_R2} and~\ref{corollary_rips} and the particular case of Theorem~\ref{thm_mst} where the measure is supported on Euclidean space. More generally, although there are few rigorous results on the scaling of the number of persistent homology intervals in higher dimensions, computational results indicate that these hypotheses hold broadly --- see the Appendix. Also, Stemeseder~\cite{2014stemeseder} showed that any positive, continuous probability density on the $m$-dimensional Euclidean sphere satisfies the hypothesis on the expected number of intervals, and the uniform measure on the sphere satisfies the hypothesis on the variance. However, we think these are interesting hypotheses not because they are easy to prove but because they can be estimated in data analysis.

\subsection{Dimension Estimation}
\label{sec:dimension}
As we noted earlier, several authors have proposed to use persistent homology for dimension estimation.  Here, we provide the first proof that these methods recover a classical fractal dimension, under certain hypotheses. 

We define a family of $\PH_i$ dimensions of a measure, one for each real number $\alpha>0$ and $i\in\N:$

\begin{Definition}
\label{defn_phdim}
\[\text{dim}_{\PH_i^\alpha}\paren{\mu}=\frac{\alpha}{1-\beta}\,,\]
where
\[\beta=\limsup_{n\rightarrow\infty} \frac{\log\paren{\mathbb{E}\paren{E_{\alpha}^{i}\paren{x_1,\ldots,x_n}}}}{\log\paren{n}}\,.\]
\end{Definition}
That is, $\text{dim}_{\PH_i^\alpha}\paren{\mu}$ is the unique real number $d$ so that 
\[\limsup_{n\rightarrow\infty}\mathbb{E}\paren{E_{\alpha}^{i}\paren{x_1,\ldots,x_n}} n^{-\frac{k-\alpha}{k}}\]
equals $\infty$ for all $k<d,$ and is bounded for $k>d.$ The case $\alpha=1$ is very closely related to the dimension studied by Adams et al.~\cite{2019adams}, and agrees with it if defined.

Theorem~\ref{theorem_steele}~\cite{1988steele} implies that if $\mu$ is a compactly supported, non-singular probability measure on $\R^m,$ then $\text{dim}_{\PH_0^\alpha}\paren{\mu}=m$ for $0<\alpha<m.$ Similarly, the results of Divol and Polonik~\cite{2018divol} show that if $\mu$ is a bounded probability measure on the cube in $\mathbb{R}^m,$ then $\text{dim}_{\PH_i^\alpha}\paren{\mu}=m$ for $0<\alpha<m$ and $0\leq i < m.$ 

The following is a corollary of our theorems on the asymptotic behavior of $E_{\alpha}^{i}$:
\begin{corollary}
\label{thm_dimension}
If $\mu$ is a $d$-Ahlfors regular measure on a metric space and $0<\alpha<d$ then
\[\text{dim}_{\PH_0^\alpha}=d\,.\]
Furthermore, if $\mu$ is defined on $\R^m,$ $d> \gamma_i^m,$ and 
\[\mathbb{E}\paren{\abs{\PH_i\paren{x_1,\ldots,x_n}}}=O\paren{n} \quad \text{and}\quad \textit{Var}\paren{\abs{\PH_i\paren{x_1,\ldots,x_n}}}/n^2\rightarrow 0\,,\]
then 
\[\text{dim}_{\PH_i^\alpha}=d\,.\]
\end{corollary}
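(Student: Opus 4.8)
The plan is to reduce the entire corollary to the single computation $\beta=\paren{d-\alpha}/d$. Indeed, since $\text{dim}_{\PH_i^\alpha}\paren{\mu}=\alpha/\paren{1-\beta}$, the equality $\text{dim}_{\PH_i^\alpha}=d$ holds if and only if $1-\beta=\alpha/d$, i.e.\ if and only if
\[\beta=\limsup_{n\rightarrow\infty}\frac{\log\paren{\mathbb{E}\paren{E_\alpha^i\paren{x_1,\ldots,x_n}}}}{\log\paren{n}}=\frac{d-\alpha}{d}\,.\]
Thus the task is to upgrade the high-probability bounds of Theorems~\ref{thm_mst} and~\ref{thm_probabalistic} to matching bounds on the expectation $\mathbb{E}\paren{E_\alpha^i}$. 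I first record that $\text{supp}\,\mu$ is bounded: a maximal $\delta_0$-separated subset of $\text{supp}\,\mu$ must be finite, because the radius-$\delta_0/2$ balls about its points are disjoint and each has measure at least $\paren{2^dc}^{-1}\delta_0^d$ while $\mu$ is a probability measure; hence $\Delta\coloneqq\text{diam}\paren{\text{supp}\,\mu}<\infty$, and every $\PH_i$ interval has length at most $\Delta$.

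The inequality $\beta\geq\paren{d-\alpha}/d$ is the easy half and is uniform in $i$. Theorem~\ref{thm_mst} (for $i=0$) and Proposition~\ref{prop_lower} (for $i>0$, where the hypothesis $d>\gamma_i^m$ supplies the necessary non-triviality) each produce a constant $\zeta>0$ and events $G_n$ with $\mathbb{P}\paren{G_n}\rightarrow1$ on which $E_\alpha^i\geq\zeta\,n^{\paren{d-\alpha}/d}$. Because $E_\alpha^i\geq0$ surely,
\[\mathbb{E}\paren{E_\alpha^i}\geq\zeta\,n^{\paren{d-\alpha}/d}\,\mathbb{P}\paren{G_n}\geq\tfrac{\zeta}{2}\,n^{\paren{d-\alpha}/d}\]
for all large $n$, so taking $\log\paren{\cdot}/\log\paren{n}$ and letting $n\rightarrow\infty$ gives $\beta\geq\paren{d-\alpha}/d$.

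The reverse inequality is more delicate, because the definition of $\beta$ involves the expectation rather than a high-probability event. For the $\PH_0$ statement I would use the upper bound of Theorem~\ref{thm_mst}, which holds almost surely --- in fact deterministically, via a covering argument analogous to Proposition~\ref{prop_upper_extremal} with $a=1$, since a spanning tree has exactly $n-1$ edges. An almost-sure bound $E_\alpha^0\leq C\,n^{\paren{d-\alpha}/d}$ passes to the expectation, $\mathbb{E}\paren{E_\alpha^0}=\mathbb{E}\paren{E_\alpha^0\mathbf{1}_{\{E_\alpha^0\leq Cn^{\paren{d-\alpha}/d}\}}}\leq C\,n^{\paren{d-\alpha}/d}$, giving $\beta\leq\paren{d-\alpha}/d$ and the first assertion. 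For $\PH_i$ with $i>0$ the upper bound of Theorem~\ref{thm_probabalistic} holds only on a good event $H_n$ with $\mathbb{P}\paren{H_n}\rightarrow1$, so I would split
\[\mathbb{E}\paren{E_\alpha^i}=\mathbb{E}\paren{E_\alpha^i\mathbf{1}_{H_n}}+\mathbb{E}\paren{E_\alpha^i\mathbf{1}_{H_n^c}}\,.\]
On $H_n$ we have $E_\alpha^i\leq\Lambda\,n^{\paren{d-\alpha}/d}\log\paren{n}^{\alpha/d}$, whose logarithm divided by $\log\paren{n}$ tends to $\paren{d-\alpha}/d$ since $\log\paren{n}^{\alpha/d}$ is subpolynomial; thus the good-event term contributes exactly $\paren{d-\alpha}/d$ to $\beta$. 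For the complementary term I would bound $E_\alpha^i\leq\Delta^\alpha\abs{\PH_i}$ and apply Cauchy--Schwarz,
\[\mathbb{E}\paren{E_\alpha^i\mathbf{1}_{H_n^c}}\leq\Delta^\alpha\sqrt{\mathbb{E}\paren{\abs{\PH_i}^2}}\,\sqrt{\mathbb{P}\paren{H_n^c}}=O\paren{n\sqrt{\mathbb{P}\paren{H_n^c}}}\,,\]
using that the variance hypothesis gives $\mathbb{E}\paren{\abs{\PH_i}^2}=\textit{Var}\paren{\abs{\PH_i}}+\mathbb{E}\paren{\abs{\PH_i}}^2=O\paren{n^2}$.

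The main obstacle is the control of this rare-event term. To conclude $\beta\leq\paren{d-\alpha}/d$ it suffices that $n\sqrt{\mathbb{P}\paren{H_n^c}}=n^{\paren{d-\alpha}/d+o\paren{1}}$, which amounts to the \emph{quantitative} decay $\mathbb{P}\paren{H_n^c}=O\paren{n^{-2\alpha/d+o\paren{1}}}$ rather than the mere convergence $\mathbb{P}\paren{H_n^c}\rightarrow0$. I would therefore extract a polynomial tail bound on $H_n^c$ from the concentration argument behind Proposition~\ref{prop_upper_prob}: Chebyshev's inequality applied with the variance hypothesis already yields a decaying failure probability, and the point is to sharpen this to a polynomial rate (or, equivalently, to replace Cauchy--Schwarz by a higher-moment estimate for $\abs{\PH_i}$). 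Once the bad-event contribution is shown to be $n^{\paren{d-\alpha}/d+o\paren{1}}$ it is dominated by the good-event term, so $\mathbb{E}\paren{E_\alpha^i}=n^{\paren{d-\alpha}/d+o\paren{1}}$, hence $\beta=\paren{d-\alpha}/d$ and $\text{dim}_{\PH_i^\alpha}=d$ in both cases.
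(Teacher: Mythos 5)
Your reduction to showing $\beta=\paren{d-\alpha}/d$ is the right frame, and the $\PH_0$ half is complete: the upper bound of Theorem~\ref{thm_mst} is deterministic (Proposition~\ref{prop_upper_extremal_MST} holds for \emph{every} $n$-point subset of the support), so it passes to the expectation immediately, and your lower bound $\mathbb{E}\paren{E_\alpha^0}\geq \zeta\, n^{\paren{d-\alpha}/d}\mathbb{P}\paren{G_n}$ using nonnegativity is exactly what is needed. The boundedness of the support and the observation that only the expectation enters Definition~\ref{defn_phdim} are also correctly handled.

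The $i>0$ upper bound, however, has a genuine gap, which you flag but cannot close by the route you propose. Your good event $H_n$ includes $\set{\abs{\PH_i\paren{\textbf{x}_n}}\leq Dn}$, and the only control the hypotheses give on its complement is Chebyshev with $\textit{Var}\paren{\abs{\PH_i}}/n^2\rightarrow 0$ --- convergence to zero with \emph{no rate}. So the requirement $\mathbb{P}\paren{H_n^c}=O\paren{n^{-2\alpha/d+o\paren{1}}}$ is not available, and no higher moments of $\abs{\PH_i}$ are assumed. The fix is to not condition on $\set{\abs{\PH_i}\leq Dn}$ at all: in the decomposition $E_\alpha^i=F_\alpha^i\paren{\textbf{x}_n,\epsilon\paren{n}}+G_\alpha^i\paren{\textbf{x}_n,\epsilon\paren{n}}$ of Proposition~\ref{prop_upper_prob}, bound the noise term in expectation directly by $\mathbb{E}\paren{G_\alpha^i\paren{\textbf{x}_n,\epsilon\paren{n}}}\leq \epsilon\paren{n}^\alpha\,\mathbb{E}\paren{\abs{\PH_i\paren{\textbf{x}_n}}}=O\paren{n^{\frac{d-\alpha}{d}}\log\paren{n}^{\alpha/d}}$, which uses only the expectation hypothesis and needs no event. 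The only rare event then left is the Hausdorff-distance failure $d_H\paren{\textbf{x}_n,X}\geq\epsilon\paren{n}/4$, and Lemma~\ref{lemma_hausdorff_probability} gives its probability as $O\paren{\epsilon\paren{n}^{-d}e^{-A_0\epsilon\paren{n}^d n/4^d}}$, which for $\epsilon\paren{n}=Cn^{-1/d}\log\paren{n}^{1/d}$ is $O\paren{n^{-K}}$ with $K$ as large as you like by choosing $C$; there your Cauchy--Schwarz step with $\mathbb{E}\paren{\abs{\PH_i}^2}=O\paren{n^2}$ does work and the bad-event contribution is $O\paren{n^{1-K/2}}$, hence negligible. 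With that substitution your argument goes through and yields $\beta=\paren{d-\alpha}/d$ in both cases.
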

This result is weaker than our main theorems, and it can be proven with weaker hypotheses. For example, the upper bound $\text{dim}_{\PH_0^\alpha}\paren{\mu}\leq d$ holds if the hypothesis of $d$-Ahlfors regularity is replaced by the requirement that the upper box dimension of the support of $\mu$ is equal to $d.$

\begin{Proposition}
\label{prop:box}
Let $\mu$ be a measure on a bounded metric space $X,$ and let $\text{dim}_{\text{box}}\paren{X}$ be the upper box dimension of $X$ (defined below). If $\alpha<\text{dim}_{\text{box}}\paren{X}$ then
\[\text{dim}_{\PH_0^\alpha}\paren{\mu}\leq \text{dim}_{\text{box}}\paren{X}\,.\]
\end{Proposition}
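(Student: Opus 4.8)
The plan is to translate the claimed inequality into a statement about the growth exponent $\beta$ of Definition~\ref{defn_phdim}, and then to control that exponent by a purely deterministic extremal estimate on the $\alpha$-weight of a minimum spanning tree. Writing $b=\text{dim}_{\text{box}}\paren{X}$, Definition~\ref{defn_phdim} gives $\text{dim}_{\PH_0^\alpha}\paren{\mu}=\alpha/\paren{1-\beta}$, so it suffices to prove
\[\beta=\limsup_{n\rightarrow\infty}\frac{\log\paren{\mathbb{E}\paren{E^0_\alpha\paren{x_1,\ldots,x_n}}}}{\log\paren{n}}\leq \frac{b-\alpha}{b}\,.\]
This bound is strictly less than $1$ since $\alpha>0$, whence $1-\beta\geq \alpha/b>0$ and $\alpha/\paren{1-\beta}\leq \alpha/\paren{\alpha/b}=b$, giving the conclusion. (If $b=\infty$ there is nothing to prove, so I assume $b<\infty$, which makes $X$ totally bounded.)

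The key reduction is that $E^0_\alpha\paren{x_1,\ldots,x_n}$ is the $\alpha$-weight of the minimum spanning tree, a deterministic function of the point set; the samples enter only through the fact that they lie in $X$. Hence $\mathbb{E}\paren{E^0_\alpha\paren{x_1,\ldots,x_n}}\leq \sup_{Y\subseteq X,\,\abs{Y}=n} E^0_\alpha\paren{Y}$, and it is enough to prove the extremal bound that for every $b'>b$ there is a constant $C$ with $E^0_\alpha\paren{Y}\leq C\,n^{\paren{b'-\alpha}/b'}$ for all $n$-point subsets $Y\subseteq X$ and all large $n$; taking the limsup of $\paren{b'-\alpha}/b'$ and letting $b'\downarrow b$ then yields the bound on $\beta$. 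This extremal estimate is essentially the upper-bound half of the Kozma--Lotker--Stupp identification of the minimum-spanning-tree dimension with the upper box dimension~\cite{2006kozma}, but I would give a self-contained argument.

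To prove the extremal bound, let $T$ be a minimum spanning tree on $Y$ and let $N_\delta\paren{X}$ denote the minimal number of balls of radius $\delta$ needed to cover $X$, so that $N_\delta\paren{X}=O\paren{\delta^{-b'}}$ as $\delta\rightarrow 0$ by the definition of upper box dimension. The main geometric input is
\[\#\set{e\in T : \abs{e}>s}\leq N_{s/2}\paren{X}\,,\]
which follows from the standard Kruskal-algorithm fact that the number of edges of $T$ of length exceeding $s$ equals the number of connected components of the threshold graph on $Y$ joining pairs at distance $<s$, minus one: any two points in a common ball of radius $s/2$ are adjacent in that graph, so the number of components is at most the number of nonempty covering balls. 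Combining this with the trivial bound $\#\set{e\in T:\abs{e}>s}\leq n-1$ and the layer-cake identity
\[E^0_\alpha\paren{Y}=\sum_{e\in T}\abs{e}^\alpha=\alpha\int_0^\infty s^{\alpha-1}\,\#\set{e\in T:\abs{e}>s}\,ds\,,\]
I would split the integral at the critical scale $s_0\approx n^{-1/b'}$ (where $n\approx N_{s_0/2}\paren{X}$): for $s\leq s_0$ use the bound $n-1$, and for $s_0<s\leq \text{diam}\paren{X}$ use $N_{s/2}\paren{X}=O\paren{s^{-b'}}$. Since $\alpha<b\leq b'$, both pieces integrate to $O\paren{n\,s_0^\alpha}=O\paren{n^{\paren{b'-\alpha}/b'}}$, as desired.

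The main obstacle is the geometric lemma bounding the number of long tree edges by the covering number $N_{s/2}\paren{X}$; once that clustering estimate is in hand, the rest is the routine dyadic/layer-cake bookkeeping above together with the elementary manipulation of $\beta$. A secondary point to handle carefully is that the covering bound $N_\delta\paren{X}=O\paren{\delta^{-b'}}$ holds only for $\delta$ below some threshold, so one restricts the integral to small scales; the contribution from $s$ of order $\text{diam}\paren{X}$ involves a bounded number of edges of bounded length and is absorbed into the constant.
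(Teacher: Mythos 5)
Your argument is correct and follows essentially the same route as the paper, which deduces the proposition from the deterministic extremal bound $E^0_\alpha\paren{\textbf{x}_n}\leq D_\alpha n^{\paren{d_0-\alpha}/d_0}$ (Proposition~\ref{prop_upper_extremal_MST}, itself proved by bounding the number of MST edges longer than $s$ by a ball-counting quantity at scale $s/2$ via Kruskal's algorithm, and then integrating with the split at the critical scale $\approx n^{-1/d_0}$), followed by the same elementary manipulation of $\beta$ and the limit $d_0\downarrow \text{dim}_{\text{box}}\paren{X}$. The only cosmetic difference is that you phrase the clustering estimate in terms of covering numbers $N_{s/2}\paren{X}$ while the paper uses the packing numbers $M_{s/2}\paren{X}$ of Definition~\ref{defn:upperbox}; these are interchangeable here.
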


In separate experimental work (joint with J. Jaquette), we implement an algorithm to compute the persistent homology dimensions and compare its practical performance  below to classical techniques for estimating fractal dimension, such as box--counting and the estimation of the correlation dimension. The persistent homology dimension (for $i=0$) performs about as well as the correlation dimension, both in terms of the convergence rate and speed of computation, 
and significantly better than the box dimension.~\cite{2019jaquette} Our results here imply that the computational estimates in~\cite{2019jaquette} will converge with high probability as the number of samples goes to infinity for several of the examples considered. These include the $\PH_0$ dimension of the Cantor dust, Cantor set cross an interval, Sierpi\'{n}ski triangle, and Menger sponge, and the $\PH_1$ dimensions of the Cantor set cross an interval and the Sierpi\'{n}ski triangle.

\subsection{A Conjecture}
We conjecture that if the persistent homology of the support of an Ahlfors regular measure is trivial, then the Lebesgue measure can be replaced with the $d$-dimensional Hausdorff measure $\mathcal{H}^d$ in Theorem~\ref{theorem_steele}. Note that this would exclude Counterexample~\ref{prop:noLimit}.
\begin{conj}
Let $\mu$ be a $d$-Ahlfors regular measure on a metric space $M$ and let $\set{x_n}_{n\in\N}$ be i.i.d.~samples from $\mu.$ If $\PH_0\paren{\text{supp}\;\mu }$ is trivial and $0<\alpha<d,$ then
\[\lim_{n\rightarrow\infty} n^{-\frac{d-\alpha}{d}} E_\alpha^0\paren{x_1,\ldots,x_n} \rightarrow c_0\paren{\alpha,d}\int_{M}f\paren{x}^{\paren{d-\alpha}/d}\;dx\]
with probability one, where $f\paren{x}$ is the probability density of the absolutely continuous part of $\mu$ with respect to the $d$-dimensional Hausdorff measure $\mathcal{H}^d$ and  $c_0\paren{\alpha,d}$ is a continuous function of $\alpha$ and $d.$

Furthermore, if $\mu$ is supported on $\mathbb{R}^m,$ $d>\gamma_i^m,$ and  $\PH_i\paren{\text{supp}\;\mu }$ is trivial then 
\[\lim_{n\rightarrow\infty} n^{-\frac{d-\alpha}{d}} E_\alpha^i\paren{x_1,\ldots,x_n} \rightarrow c_i\paren{\alpha,d}\int_{M}f\paren{x}^{\paren{d-\alpha}/d}\;dx\]
with probability one, where $f\paren{x}$ is the probability density of the absolutely continuous part of $\mu$ with respect to the $d$-dimensional Hausdorff measure $\mathcal{H}^d$ and  $c_i\paren{\alpha,d}$ is a continuous function of $\alpha$ and $d.$

\end{conj}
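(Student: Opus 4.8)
The plan is to treat the conjecture as an upgrade of Theorem~\ref{thm_mst} from two-sided boundedness to genuine convergence, isolating the role played by the triviality of the persistent homology of the support. Theorem~\ref{thm_mst} already supplies tightness: the sequence $n^{-\frac{d-\alpha}{d}}E_\alpha^0$ is almost surely bounded between positive constants, so the only remaining issues are (i) ruling out oscillation between distinct subsequential limits --- exactly the phenomenon exploited in Counterexample~\ref{prop:noLimit} --- and (ii) identifying the limit with $c_0(\alpha,d)\int f^{(d-\alpha)/d}\,d\mathcal H^d$. First I would reduce the almost-sure statement to convergence of the expectation: since inserting or deleting a sample changes $E_\alpha^0$ by a uniformly bounded amount of order $\operatorname{diam}\paren{\text{supp}\,\mu}^\alpha$, a martingale/bounded-differences argument yields concentration, and a Borel--Cantelli argument upgrades convergence in mean to convergence with probability one, exactly as in Steele~\cite{1988steele}. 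It then suffices to show $n^{-\frac{d-\alpha}{d}}\,\mathbb E\paren{E_\alpha^0}\to c_0(\alpha,d)\int f^{(d-\alpha)/d}\,d\mathcal H^d$.

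For the existence and identification of this limit I would adapt the subadditive Euclidean functional / objective-method machinery~\cite{2000yukich} to the Ahlfors regular setting, with the $d$-dimensional Hausdorff measure $\mathcal H^d$ playing the role of Lebesgue measure. One introduces a boundary-rooted version $\tilde E_\alpha^0$ of the MST weight and establishes smoothness together with matching subadditive and superadditive estimates across a partition of $\text{supp}\,\mu$ into pieces of small diameter. Here the hypothesis that $\PH_0\paren{\text{supp}\,\mu}$ is trivial enters decisively: connectedness of the support forces the edges that splice the pieces together to be short --- their lengths being controlled by the Kozma--Lotker--Stupp longest-edge asymptotics~\cite{2010kozma} --- so the cross-terms are of lower order and the functional is genuinely near-additive. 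This is precisely what fails for the disconnected support of Counterexample~\ref{prop:noLimit}, where long connecting edges carry a non-negligible, scale-dependent weight that drives the oscillation.

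The value of the limit should emerge from a blow-up (tangent-measure) analysis. At $\mathcal H^d$-almost every point $x$ of the support, the empirical measure rescaled by the natural factor $\paren{nf(x)}^{1/d}$ locally resembles a unit-intensity point process on a tangent structure; by the homogeneity of $E_\alpha^0$ of order $\alpha$, the contribution of a neighborhood of $x$ scales like $f(x)^{-\alpha/d}$ per point, i.e. like $f(x)^{(d-\alpha)/d}$ per unit of $\mathcal H^d$-mass. Summing over a fine partition and passing to the limit by dominated convergence --- using the uniform tightness furnished by Theorem~\ref{thm_mst} --- yields the integral $\int f^{(d-\alpha)/d}\,d\mathcal H^d$, with the prefactor $c_0(\alpha,d)$ being the limiting weight-per-point of the rescaled process.

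The hard part will be the universality of $c_0(\alpha,d)$: the conjecture asserts it depends only on $\alpha$ and $d$, not on the particular fractal. In Steele's Euclidean setting this is automatic because every tangent space is $\R^m$, but a general $d$-Ahlfors regular space admits a variety of tangent measures, and the objective method's usual foundation --- convergence to a single translation-invariant Poisson limit --- is unavailable. Proving that the per-point weight is nonetheless insensitive to the microscopic geometry would require either a self-averaging argument showing that the connectivity functional depends on a local patch only through its $\mathcal H^d$-mass, or a two-step strategy: establish the limit first for self-similar sets (where exact self-similarity furnishes a renewal / subadditive-ergodic structure that pins down $c_0$), show the constant agrees across self-similar sets of equal dimension, and then sandwich a general Ahlfors regular measure between self-similar approximants. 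For the higher-dimensional statement the analogue of connectedness is the triviality of $\PH_i\paren{\text{supp}\,\mu}$, which should guarantee that every persistent interval is local --- no interval has length comparable to the diameter --- so that a Penrose--Yukich stabilization argument can localize $E_\alpha^i$ and the Divol--Polonik analysis~\cite{2018divol} can be transported to the fractal setting. Making this stabilization rigorous, together with establishing the universality of $c_i(\alpha,d)$, is the crux and the reason the statement remains conjectural.
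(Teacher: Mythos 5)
The statement you are addressing is presented in the paper as a \emph{conjecture}: the author offers no proof, and your own closing sentence concedes that the crux ``remains conjectural.'' So what you have written is a research program, not a proof, and it should be judged as such. As a program it correctly identifies the two genuine obstacles --- upgrading the two-sided bounds of Theorem~\ref{thm_mst} to a limit, and pinning down a constant $c_0\paren{\alpha,d}$ that depends only on $\alpha$ and $d$ --- and it correctly diagnoses why Counterexample~\ref{prop:noLimit} is excluded by the triviality hypothesis (disconnected support is what permits scale-dependent splice edges of non-negligible weight). But several steps as stated would not go through. First, the concentration step: bounded differences of constant order give fluctuations of order $\sqrt{n}$ by Azuma, and $\sqrt{n}=o\bigl(n^{\paren{d-\alpha}/d}\bigr)$ only when $\alpha<d/2$; for $d/2\leq \alpha<d$ the reduction of almost-sure convergence to convergence of expectations fails as written and needs the finer variance estimates that Steele and Yukich develop in the Euclidean case. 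Second, the subadditive-functional machinery you invoke is built on partitioning a cube into congruent subcubes and exploiting translation invariance and scaling of Lebesgue measure; a general $d$-Ahlfors regular metric space has no such partition, no boundary along which to root the functional, and no group of symmetries, so ``near-additivity'' cannot simply be transported --- this is not a detail but the reason the problem is open.

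Third, and most seriously, the universality of $c_0\paren{\alpha,d}$ is asserted but not argued. In the Euclidean setting every blow-up limit is $\R^m$ with a homogeneous Poisson process, which is what forces a single constant; a $d$-Ahlfors regular space can have wildly different tangent structures at different points and across different examples of the same dimension, and your proposed remedy (prove it for self-similar sets, show the constant agrees across self-similar sets of equal dimension, then approximate) contains no mechanism for the middle step --- there is no a priori reason the per-point MST weight of the Cantor set and of a self-similar set of the same dimension in $\R^2$ should coincide. Until that is addressed, the argument does not establish even the existence of the limit with a well-defined constant, let alone its claimed form. The higher-dimensional half inherits all of these issues plus the stabilization question you flag. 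In short: a sensible and honest roadmap, consistent with the fact that the paper leaves the statement open, but not a proof, and the gaps you acknowledge are exactly where the mathematical content would have to live.
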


\section{Preliminaries}
We introduce notation and lemmas that will be used throughout the paper. Lemma~\ref{lemma_ballcount} controls the asymptotics of the maximal number of disjoint balls centered at points in the support of an Ahlfors regular measure, and will be applied in many of our arguments. In Section~\ref{sec:occupancy}, we define occupancy indicators in terms of collections of subsets of a metric space, and prove a weak law of large numbers for them. Later in the paper, we will use these occupancy indicators to define events implying the existence of a minimum spanning tree edge or persistent homology interval of a certain length.

\subsection{Notation}
In the following, $X$ will denote a metric space and $\textbf{x}$ will denote a finite point set with an unspecified number of elements. Furthermore, $\textbf{x}_n$ will be shorthand for a finite point set $\set{x_1,\ldots,x_n}\subset X$ containing $n$ points. If the measure $\mu$ is obvious from the context, $\set{x_j}_{j\in\N}$ will be a collection of independent random variables with common distribution $\mu.$ Finally, we will use symbols with the ``mathcal'' font (i.e. $\mathcal{A},\mathcal{B},\ldots$) for collections of sets. 

\subsection{Ahlfors Regularity and Ball-counting}

Let $X$ be a metric space, and let $M_{\delta}\paren{X}$ be the maximal number of disjoint open balls of radius $\delta$ centered at points of $X.$ The upper box dimension is defined in terms of  the asymptotic properties of $M_\delta\paren{X}.$
\begin{Definition}
\label{defn:upperbox}
Let $X$ be a bounded metric space. The upper box dimension of $X$ is
\s{\text{dim}_{box}\paren{X}=\limsup_{\delta\rightarrow 0}{\frac{\log{\paren{M_\delta\paren{X}}}}{-\log\paren{\delta}}}.}
\end{Definition}

If $X$ admits a $d$-Ahlfors regular measure, we can control the behavior of $M_\delta\paren{X}.$ 

\begin{Lemma}[Ball-counting Lemma]
\label{lemma_ballcount}
If $\mu$ is a is $d$-Ahlfors regular measure supported on a metric space $X$ then
\[\frac{1}{c} 2^{-d} \, \delta^{-d} \leq M_{\delta}\paren{X}\leq c \, \delta^{-d}\]
for all $\delta<\delta_0,$ where $c$ and $\delta_0$ are the constants given in Definition~\ref{defn:ahlfors}.
\end{Lemma}
\begin{proof}
Let $\set{x_j}_{j=1}^{M_\delta\paren{X}}$ be the centers of a maximal set of disjoint balls of radius $\delta$ centered at points of $X.$
\begin{align*}
1=&\;\;\mu\paren{X}\\
\geq & \;\; \sum_{j=1}^{M_\delta\paren{\mu}} \mu\paren{B_{\delta}\paren{x_j}}&&\text{by disjointness}\\
\geq & \;\; \frac{1}{c} \delta^d M_\delta\paren{\mu}&&\text{by Ahlfors regularity}\\
\implies & \;\; M_\delta\paren{\mu}\leq c\delta^{-d}\,.
\end{align*}

The maximality of $\set{B_\delta\paren{x_i}}_{i=1}^{M_\delta\paren{\mu}}$ implies that the balls of radius $2\delta$ centered at the points $\set{x_i}$ cover $X.$ It follows that 
\begin{align*}
1=&\;\;\mu\paren{X}\\
&\;\; \leq \sum_{j=1}^{M_\delta\paren{X}} \mu\paren{B_{2\delta}\paren{x_j}}\\
\leq & \;\; c 2^d \delta^d M_\delta\paren{X} &&\text{by Ahlfors regularity}\\
\implies & \;\; M_\delta\paren{X}\geq \frac{1}{c}2^{-d} \delta^{-d}\,,
\end{align*}
as desired. 
\end{proof}

\subsection{Occupancy Indicators}
\label{sec:occupancy}
Our strategy for proving lower bounds for the asymptotic behavior of $E_\alpha^i(x_1,\ldots,x_n)$ will be to define certain \textbf{occupancy indicators} that imply the existence of a persistent homology interval (or minimum spanning tree edge) whose length is bounded away from zero.

If $A$ and $B$ are sets define
\[\delta\paren{A,B}=\begin{cases}
0 & A\cap B=\varnothing\\ 
1 & A\cap B \neq \varnothing\\
\end{cases}
\,.\]

Also, If $A$ is a set and $\mathcal{B}$ is a collection of sets define the occupancy indicator
\[\Xi\paren{\textbf{x},A,\mathcal{B}}=\begin{cases}
 1 & \delta\paren{A,\textbf{x}}=0 \quad \quad \text{and}\quad \quad \delta\paren{B,\textbf{x}}=1\quad \forall\, B\in\mathcal{B}\\
 0 & \text{otherwise}
\end{cases}\,.\]
All occupancy indicators considered in this paper will satisfy $A\cap B=\varnothing$ for all $B\in \mathcal{B},$ and $B_1\cap B_2=\varnothing$ for all $B_1,B_2\in\mathcal{B}$ so that $B_1\neq B_2.$ We say that two occupancy indicators  $\Xi\paren{\textbf{x},A_1,\mathcal{B}}$ and $\Xi\paren{\textbf{x},A_2,\mathcal{C}}$ (where $\textbf{x}$ is the same sample for each) are \textbf{disjoint} if
\[\paren{A_1 \cup \bigcup_{B\in\mathcal{B}} B}\cap \paren{A_2 \cup \bigcup_{C\in\mathcal{C}} C}=\varnothing\,.\]

An \textbf{$n,p,q,r$-bounded occupancy indicator} is a random variable of the form 
\[\Xi\paren{\textbf{x}_n,A,\mathcal{B}}\,,\]
where $\mathcal{B}$ is a collection of at least $r$ sets, and $\textbf{x}_n$ is a collection of $n$ independent random variables with common distribution $\nu$ satisfying 
\[\nu\paren{A}\leq q/n\quad\text{and}\quad\nu\paren{B}\geq p/n\;\;\forall\;B\in\mathcal{B}\,.\]
If the above conditions on $\nu$ and the number of sets in $\mathcal{B}$ hold with equality, we say that $\Xi\paren{\textbf{x}_n,A,\mathcal{B}}$ is a \textbf{$n,p,q,r$-uniform occupancy indicator}.

Disjoint $n,p,q,r$-uniform occupancy indicators satisfy a weak law of large numbers as $n\rightarrow\infty.$ 
\begin{Lemma}
\label{lemma_occupancy_0}
Let $r,a>0,$ and $0<p,q<1.$ Also, for each $n\in\mathbb{N}$ let $X^n_1,\ldots,X^n_{\floor{a\,n}}$ be disjoint $n,p,q,r$-uniform occupancy indicators.  If $Y_n=\frac{1}{n}\sum_{j=1}^{\floor{a\,n}} X^n_j,$ then
\[\lim_{n\rightarrow\infty} Y_n = \gamma\]
in probability, where $\gamma=a e^{-q}\paren{1-e^{-p}}^r.$ 
\end{Lemma}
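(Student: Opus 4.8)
The plan is to prove convergence in probability by the second moment method: I will show that $\mathbb{E}(Y_n)\to\gamma$ and $\mathrm{Var}(Y_n)\to 0$, and then invoke Chebyshev's inequality.

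First I would compute the expectation of a single indicator. Fix $X_j^n=\Xi(\mathbf{x}_n,A,\mathcal{B})$ with $\mathcal{B}=\{B_1,\ldots,B_r\}$. Because the sets $A,B_1,\ldots,B_r$ are pairwise disjoint (by the standing hypothesis on occupancy indicators), the numbers of sample points falling in each region are governed by a single multinomial law, and $\{X_j^n=1\}$ is precisely the event that $A$ receives no points while each $B_i$ receives at least one. Applying inclusion--exclusion over the subsets $S\subseteq\{1,\ldots,r\}$ of $B_i$'s required to be empty, and using that the chance no point lands in a region of $\nu$-measure $m/n$ is $(1-m/n)^n$, I obtain the exact formula
\[
\mathbb{E}(X_j^n)=\sum_{k=0}^r \binom{r}{k}(-1)^k\Big(1-\tfrac{q+kp}{n}\Big)^n.
\]
Since $(1-x/n)^n\to e^{-x}$, this converges to $e^{-q}\sum_{k=0}^r\binom{r}{k}(-e^{-p})^k=e^{-q}(1-e^{-p})^r$. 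As every indicator is $n,p,q,r$-uniform, this limit is common to all $j$, so $\mathbb{E}(Y_n)=\frac{\lfloor an\rfloor}{n}\,e^{-q}(1-e^{-p})^r\to\gamma$.

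For the variance I would write $\mathrm{Var}(Y_n)=n^{-2}\big(\sum_j \mathrm{Var}(X_j^n)+\sum_{j\neq j'}\mathrm{Cov}(X_j^n,X_{j'}^n)\big)$. The diagonal contributes $O(n^{-1})$, since each $\mathrm{Var}(X_j^n)\leq \tfrac14$ and there are only $\lfloor an\rfloor$ terms. The crux is the off-diagonal sum, which has $O(n^2)$ terms and therefore requires each covariance to be $O(n^{-1})$ uniformly. Here disjointness is essential: since the $2(r+1)$ sets attached to a pair $(X_j^n,X_{j'}^n)$ are all pairwise disjoint, the same inclusion--exclusion yields
\[
\mathbb{E}(X_j^n X_{j'}^n)=\sum_{k=0}^r\sum_{l=0}^r\binom{r}{k}\binom{r}{l}(-1)^{k+l}\Big(1-\tfrac{2q+(k+l)p}{n}\Big)^n.
\]
The quantitative estimate $(1-x/n)^n=e^{-x}+O(n^{-1})$, valid uniformly for $x$ in the bounded range $[0,2q+2rp]$, then gives $\mathbb{E}(X_j^n X_{j'}^n)=e^{-2q}(1-e^{-p})^{2r}+O(n^{-1})$ and likewise $\mathbb{E}(X_j^n)\mathbb{E}(X_{j'}^n)=e^{-2q}(1-e^{-p})^{2r}+O(n^{-1})$, so that each covariance is $O(n^{-1})$ with a constant depending only on $p,q,r$. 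Summing, the off-diagonal term is $n^{-2}\cdot O(n^2)\cdot O(n^{-1})=O(n^{-1})$, whence $\mathrm{Var}(Y_n)\to 0$.

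I expect the main obstacle to be making this covariance bound both rigorous and \emph{uniform} over all pairs: the cancellation in $\mathrm{Cov}=\mathbb{E}(X_j^n X_{j'}^n)-\mathbb{E}(X_j^n)\mathbb{E}(X_{j'}^n)$ relies on the leading $e^{-2q}(1-e^{-p})^{2r}$ terms agreeing exactly, so I must track the $O(n^{-1})$ remainder in the expansion of $(1-x/n)^n$ and confirm that its constant is controlled uniformly over the finite set of bounded arguments that appear (independently of the particular sets $A_j,\mathcal{B}_j$). Once the variance is shown to vanish, Chebyshev's inequality yields $Y_n\to\gamma$ in probability, completing the proof.
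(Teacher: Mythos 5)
Your proposal is correct and follows essentially the same route as the paper: inclusion--exclusion over the empty/occupied regions (using pairwise disjointness to reduce to a multinomial law) for the first and second moments, vanishing covariances, and Chebyshev's inequality. The only cosmetic difference is that you track an explicit $O(n^{-1})$ rate on each covariance, whereas the paper merely notes that for \emph{uniform} indicators all pairwise covariances coincide and their common value tends to $0$, which already suffices since the off-diagonal sum carries a bounded prefactor $\sim a^2$ after dividing by $n^2$.
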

\begin{proof}
First, we compute the limiting expectation of the events $X_j^n$ as $n\rightarrow\infty$:

\[\mathbb{E}\paren{X_j^n}=\mathbb{P}\paren{X_j^n=1}=\paren{1-\frac{q}{n}}^n\sum_{j=0}^{r}\paren{-1}^{j}{r\choose j}\paren{1-j\frac{p/n}{1-q/n}}^n\]
by inclusion-exclusion. Therefore
\[\lim_{n\rightarrow\infty} \mathbb{E}\paren{X_j^n}=e^{-q}\paren{\sum_{k=0}^{r}\paren{-1}^{k}{r\choose k}e^{-k p}}=e^{-q}\paren{1-e^{-p}}^r\]
where we factored the second term in the middle equation using the binomial theorem. Thus $\lim_{n\rightarrow\infty} \mathbb{E}\paren{Y_n}=\gamma$ by linearity of expectation.

A similar computation shows that if $j\neq k,$ 
\[\lim_{n\rightarrow\infty} \mathbb{E}\paren{X_j^n X_k^n}=e^{-2q}\paren{1-e^{-p}}^{2r}\,.\]

It follows that
\[\lim_{n\rightarrow\infty}\text{Cov}\paren{X_j^n,X_k^n}=\;\; \lim_{n\rightarrow\infty}\paren{\mathbb{E}\paren{X_j^n X_k^n}-\mathbb{E}\paren{X_j^n}\mathbb{E}\paren{X_k^n}}=0\,.\]

Therefore
\begin{align*}
\text{Var}\paren{Y_n}=&\;\; \frac{1}{n^2}\paren{\sum_{j=1}^{\floor{a\,n}}\text{Var}\paren{X_j}+2\sum_{j=1}^{\floor{a\,n}}\sum_{i=1}^{j-1}\text{Cov}\paren{X^n_j,X^n_k}} \\
\sim&\;\; \frac{a}{n}\text{Var}\paren{X^n_1}+a\frac{n^2-n}{n^2}\text{Cov}\paren{X^n_1,X^n_2}\\
\leq & \;\; \frac{a}{n} + a \paren{1-\frac{1}{n}}\text{Cov}\paren{X^n_1,X^n_2}
\end{align*}
also converges to $0$ as $n$ goes to $\infty.$ 

Let $\epsilon>0$ and $0<\rho<1.$ Choose $N$ sufficiently large so that
\[\abs{\mathbb{E}\paren{Y_n}-\gamma}<\epsilon/2\quad \quad \text{and}\quad \quad \text{Var}\paren{Y_n}<\frac{\epsilon^2 \rho}{4}\]
for all $n>N.$ If $n>N,$
\begin{align*}
\mathbb{P}\paren{\abs{Y_n -\gamma} > \epsilon} \leq & \;\; \mathbb{P}\paren{\abs{Y_n-\mathbb{E}\paren{Y_n}} > \epsilon/2}\\
\leq  & \;\;  \mathbb{P}\paren{\abs{Y_n -\mathbb{E}\paren{Y_n}} > \frac{1}{\sqrt{\rho}}\sqrt{\text{Var}\paren{Y_n}}}\\
\leq & \;\; \rho
\end{align*}
by Chebyshev's Inequality.
\end{proof}

The occupancy indicators we define below will not be uniform, but we can use the previous lemma to bound them. To do so, we require a standard lemma on non-atomic measures~\cite{overflow_nonatomic,1958sikorski}.

\begin{Lemma}
If $\mu$ is a non-atomic measure on a metric space $Y,$ and $0<a<\mu\paren{Y}$ then there exists $Y_0\subset Y$ so that $\mu\paren{Y_0}=a.$
\end{Lemma}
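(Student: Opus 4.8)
This is the classical theorem of Sierpi\'{n}ski asserting that a non-atomic measure takes every value between $0$ and its total mass, and the plan is to prove it in two stages. Recall that $\mu$ being non-atomic means that every measurable set of positive measure contains a measurable subset of strictly smaller positive measure. The first stage is a \emph{small-set lemma}: for every measurable $A$ with $\mu(A)>0$ and every $\epsilon>0$ there is a measurable $B\subseteq A$ with $0<\mu(B)<\epsilon$. I would prove this by iterated bisection. Non-atomicity gives a subset $B\subseteq A$ with $0<\mu(B)<\mu(A)$; since $\mu(B)+\mu(A\setminus B)=\mu(A)$, at least one of $B$ and $A\setminus B$ has positive measure at most $\mu(A)/2$. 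Repeating this on the smaller piece produces a nested sequence of positive-measure subsets of $A$ whose measures are bounded by $\mu(A)/2^k\to 0$, and choosing $k$ large enough makes the measure smaller than $\epsilon$.

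The second stage is a greedy exhaustion that produces a set of measure exactly $a$. I would build an increasing sequence $\varnothing = B_0\subseteq B_1\subseteq\cdots$ with $\mu(B_n)\le a$ as follows. Given $B_n$, set the deficit $r_n=a-\mu(B_n)$ and consider the leftover set $W_n=Y\setminus B_n$. Because $0<a<\mu(Y)$, we have $\mu(W_n)=\mu(Y)-\mu(B_n)\ge \mu(Y)-a>0$, so the small-set lemma guarantees subsets of $W_n$ of arbitrarily small positive measure below $r_n$; hence $\sigma_n=\sup\{\mu(C):C\subseteq W_n,\ \mu(C)\le r_n\}$ is strictly positive whenever $r_n>0$. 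I then choose $C_n\subseteq W_n$ with $\sigma_n/2<\mu(C_n)\le r_n$ and set $B_{n+1}=B_n\cup C_n$, which keeps $\mu(B_{n+1})=\mu(B_n)+\mu(C_n)\le a$. Letting $Y_0=\bigcup_n B_n$, continuity from below gives $\mu(Y_0)=\lim_n\mu(B_n)=:L\le a$.

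The crux, and the step I expect to be the main obstacle, is upgrading this approximation to the \emph{exact} value $a$: finite unions of admissible sets may overshoot $a$, so one cannot simply take a maximizing sequence, which is precisely why the nested greedy construction is needed. To finish, I would argue by contradiction. Suppose $L<a$. Then $W=Y\setminus Y_0$ has $\mu(W)=\mu(Y)-L>0$, so the small-set lemma yields $F\subseteq W$ with $0<\mu(F)<a-L$. Since $F\subseteq W\subseteq W_n$ and $\mu(F)<a-L\le r_n$ for every $n$, the set $F$ is admissible in each supremum, whence $\sigma_n\ge\mu(F)$ and $\mu(C_n)>\mu(F)/2>0$ for all $n$. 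But then $L=\mu(Y_0)=\sum_n\mu(C_n)$ would diverge because its terms are bounded below by the fixed positive constant $\mu(F)/2$, contradicting $L\le a<\infty$. Therefore $L=a$ and $Y_0$ is the desired set. Note that only the finiteness of the target value $a$ is used in this last step, not finiteness of the total mass $\mu(Y)$.
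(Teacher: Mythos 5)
The paper does not actually prove this lemma --- it is stated as a standard fact with citations (Sikorski and a MathOverflow thread) --- so there is no in-paper argument to compare against. Your proof is a correct, self-contained rendition of the classical Sierpi\'{n}ski argument: the bisection step yielding subsets of arbitrarily small positive measure is sound, the greedy construction with the half-supremum choice of $C_n$ is the standard device for making the exhaustion terminate at exactly $a$, and the concluding contradiction (the $C_n$ are pairwise disjoint with $\mu(C_n)>\mu(F)/2$ for a fixed positive $\mu(F)$, forcing $\sum_n\mu(C_n)$ to diverge past $a<\infty$) is exactly right. The only point worth tightening is the degenerate case $r_n=0$: there $\sigma_n=0$ and no $C_n$ with $\sigma_n/2<\mu(C_n)\leq r_n$ exists, but in that case $B_n$ already has measure $a$ and one simply stops (or takes $C_m=\varnothing$ for all $m\geq n$). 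That is a cosmetic omission, not a gap; in the paper's application $\mu$ is a probability measure restricted to a subset, so all the finiteness caveats you flag are automatic there.
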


\begin{Lemma}
\label{lemma_occupancy_2}
Let $r,a>0,$ $0<p,q<1,$ and $s_n\geq \floor{a n}$ for all $n\in \N.$ Also, for each $n\in\mathbb{N}$ let $X^n_1,\ldots,X^n_{s_n}$ be disjoint $n,p,q,r$-bounded occupancy indicators. Under these hypotheses, there is a $\gamma>0$ so that
\[\lim_{n\rightarrow\infty} \frac{1}{n}\sum_{j=1}^{s_n} X^n_j\geq \gamma\]
with high probability.
\end{Lemma}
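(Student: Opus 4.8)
The plan is to deduce the lower bound from Lemma~\ref{lemma_occupancy_0} by dominating each bounded indicator $X^n_j=\Xi\paren{\textbf{x}_n,A_j,\mathcal{B}_j}$ from below by a \emph{uniform} one with a common number of sets. The tempting idea of replacing $\mathcal{B}_j$ by a sub-collection of exactly $r$ sets fails: requiring fewer sets to be occupied only makes the indicator \emph{larger}, so it cannot bound $X^n_j$ from below. Instead I would keep \emph{all} of the sets of each $\mathcal{B}_j$, enlarge $A_j$ and shrink the $B$'s to the extremal measures $q/n$ and $p/n$, and pad the collection up to a fixed cardinality $K$ with fresh ``dummy'' sets. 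For this to yield indicators governed by a single parameter $K$, I must first rule out the indices whose collections are too large.

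First I would bound the number of sets. Write $k_j=\abs{\mathcal{B}_j}$. Since the indicators are disjoint and each $\mathcal{B}_j$ consists of disjoint sets, all of the sets in $\bigcup_j\mathcal{B}_j$ are pairwise disjoint, so
\[\frac{p}{n}\sum_{j=1}^{s_n}k_j\;\leq\;\sum_{j=1}^{s_n}\sum_{B\in\mathcal{B}_j}\nu\paren{B}\;\leq\;\nu\paren{X}=1\,,\]
and hence $\sum_j k_j\leq n/p$. A Markov-type count then shows that, for $K$ chosen large enough (say $K>2/(ap)$), at most $n/(Kp)<an/2$ indices have $k_j>K$; thus at least $\lfloor an\rfloor-an/2$ of the indices are \emph{good}, meaning $r\leq k_j\leq K$. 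The same inequality gives $s_n\leq n/(rp)$, so $a$ is necessarily bounded and the discarded indices vacate a definite amount of measure.

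Next I would construct the dominating uniform indicators. Fix a constant $a'$ with $0<a'<a/2$, small enough that $a'(q+Kp)<1$, and let $J$ be a set of $\lfloor a'n\rfloor$ good indices (which exists for large $n$ by the count above). For each $j\in J$ I would use the preceding lemma on non-atomic measures to enlarge $A_j$ to $\widehat{A}_j\supseteq A_j$ with $\nu\paren{\widehat{A}_j}=q/n$, to shrink each $B\in\mathcal{B}_j$ to $B'\subseteq B$ with $\nu\paren{B'}=p/n$, and to adjoin $K-k_j$ disjoint dummy sets of measure exactly $p/n$; the extra mass required for the enlargements and dummies, which totals at most $a'(q+Kp)<1$, can be carved out disjointly from the territory vacated by the discarded indices (and the global complement). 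Writing $\widehat{U}^n_j$ for the resulting uniform $n,p,q,K$-indicator, the inclusions $\widehat{A}_j\supseteq A_j$ and $B'\subseteq B$ force $\{\widehat{U}^n_j=1\}\subseteq\{X^n_j=1\}$, so $\widehat{U}^n_j\leq X^n_j$ pointwise, and the $\widehat{U}^n_j$ are mutually disjoint by construction. This bookkeeping --- reconciling the variable cardinalities $k_j$ with the single parameter required by Lemma~\ref{lemma_occupancy_0}, while carving out the enlarged, shrunken, and dummy sets with exact measures and keeping everything disjoint --- is the crux of the argument, and it is precisely what forces both the counting step and the choice of a small $a'$.

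Finally, applying Lemma~\ref{lemma_occupancy_0} to the disjoint uniform $n,p,q,K$-indicators $\widehat{U}^n_j$, $j\in J$, yields
\[\frac{1}{n}\sum_{j\in J}\widehat{U}^n_j\;\longrightarrow\;a'\,e^{-q}\paren{1-e^{-p}}^{K}\]
in probability, a positive limit which I call $\gamma$. Since $X^n_j\geq\widehat{U}^n_j\geq 0$ for $j\in J$,
\[\frac{1}{n}\sum_{j=1}^{s_n}X^n_j\;\geq\;\frac{1}{n}\sum_{j\in J}\widehat{U}^n_j\,,\]
so $\frac{1}{n}\sum_{j=1}^{s_n}X^n_j\geq\gamma$ with high probability, as claimed.
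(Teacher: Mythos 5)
Your proposal is correct, and its overall strategy is the same as the paper's: use non-atomicity to enlarge each $A_j$ to measure exactly $q/n$ and shrink each $B\in\mathcal{B}_j$ to measure exactly $p/n$, carving the extra mass disjointly from leftover territory, so that the resulting uniform indicators sit below the original ones, and then invoke Lemma~\ref{lemma_occupancy_0}. (Minor note: the paper's proof states the implication as $X_j^n=1\implies \hat{X}_j^n=1$, which is backwards; you correctly have $\{\widehat{U}^n_j=1\}\subseteq\{X^n_j=1\}$, which is what the stochastic domination actually requires.) Where you genuinely diverge is in handling the cardinalities of the collections $\mathcal{B}_j$. The paper simply shrinks every set of every $\mathcal{B}_j^n$ and applies Lemma~\ref{lemma_occupancy_0} to the result, but that lemma is stated and proved only for uniform indicators whose collections have \emph{exactly} $r$ sets (the inclusion--exclusion computation of $\mathbb{E}(X_j^n)$ uses $\abs{\mathcal{B}}=r$), whereas the shrunken collections have $\abs{\mathcal{B}_j^n}\geq r$ sets of varying and a priori unbounded size. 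Your Markov-type count (using $\sum_j k_j\leq n/p$ from disjointness) to discard the indices with $k_j>K$, followed by padding the survivors up to a common cardinality $K$ with dummy sets of measure $p/n$ --- which only decreases the indicators and so preserves the lower bound --- is precisely the bookkeeping needed to make the appeal to Lemma~\ref{lemma_occupancy_0} literal. In that sense your argument is not just correct but more careful than the paper's at the one point where its proof is loose; the only cosmetic remark is that convergence in probability to $a'e^{-q}(1-e^{-p})^K$ gives the bound $\gamma-\epsilon$ with high probability, so you should take, say, half that limit as the final constant $\gamma$.
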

\begin{proof}
Let $a_0=\min\paren{a,1/\paren{p+q}},$ $1\leq j \leq \floor{a_0 n},$ and
\[X_j^n=\Xi\paren{\textbf{x}_n,A_j^n,\mathcal{B}_j^n}\,.\]

$\nu$ is non-atomic, so by the previous lemma we can find a subset $\hat{B}$ of each set $B\in\mathcal{B}_j^n$ so that $\nu\paren{\hat{B}}=p/n$. Let 
\[\hat{\mathcal{B}}_j^n=\set{\hat{B}:B\in\mathcal{B}_j^n}\quad\quad\text{and}\quad\quad D_n=\bigcup_{j=1}^{\floor{a_0 n}}\bigcup_{\hat{B}\in\hat{\mathcal{B}}_
n}\hat{B}\,.\]

We will show that there are disjoint sets $\hat{A}_1^n,\ldots, \hat{A}_{\floor{a_0 n}}^n$ so that $A_j^n\subseteq \hat{A}_j^n \subset D_n^c$ and $\nu\paren{\hat{A}_j^n}=q/n$ for $j=1,\ldots,\floor{a_0 n}.$ Let $\hat{D}_n=D_n^c\setminus  \cup_{j} A_j^n.$ The maximum index of $j$ is $\floor{a_0 n}$  and  $a_0\paren{p+q}\leq 1$ so
\[\nu\paren{\hat{D}_0}\geq \sum_{j=1}^{\floor{a_0 n}}\paren{\frac{q}{n}-\nu\paren{A_j^n}}\,.\]
Applying the previous lemma to $\nu\mid_{\hat{D}_n}$ gives a $C_1\subset \hat{D}_n$ with $\nu\paren{C_1}=q/n-\nu\paren{A_1^n},$ so if $\hat{A}_1^n=C_1\cup A_1^n$ then $A_1^n \subseteq \hat{A}_1^n \subset \hat{D}_n$ and $\nu\paren{\hat{A}_1^n}=q/n.$ Assuming we have found $\hat{A}_1^n,\ldots,\hat{A}_k^n$  we can apply the same argument to $\nu\mid_{\hat{D}_n\setminus \cup_{j=1}^k \hat{A}_j^n}$ to find $ \hat{A}_{k+1}^n.$

Let
\[\hat{X}_j^n=\Xi\paren{\textbf{x}_n,\hat{A}_j^n,\hat{\mathcal{B}}_j^n}\,.\]
By construction, $X_j^n=1\implies \hat{X}_j^n=1$ so $\frac{1}{n}\sum_{j=1}^{\floor{s_n}} X^n_j$ stochastically dominates $\frac{1}{n}\sum_{j=1}^{\floor{a_0\,n}} \hat{X}^n_j.$ Applying the Lemma~\ref{lemma_occupancy_0} to the latter sum implies the desired result.
\end{proof}

\section{The Proofs for Minimum Spanning Trees}
\label{sec_MST}
We prove the upper and lower bounds in Theorem~\ref{thm_mst} in Sections~\ref{sec_MST_upper} and ~\ref{sec_MST_lower} below. First, we sketch both proofs. If $\textbf{x}$ is a finite metric space, let $T(\textbf{x})$ denote the minimum spanning tree on $\textbf{x},$ and let $p\paren{\textbf{x},\delta}$ be the number of edges of $T\paren{\textbf{x}}$ of length greater than $\delta.$

To prove the upper bound, we begin by controlling $p\paren{\textbf{x},\delta}$ in terms of the maximal number of disjoint balls of radius $\delta/2$ centered at points of $\textbf{x}_n$ (Lemma~\ref{edge_counting_lemma}). Combining this with the asymptotics we found in the ball-counting lemma above (Lemma~\ref{lemma_ballcount}) gives that $p\paren{\textbf{x}_n,\delta}\leq C\delta^{-d}$ for some constant $C>0$ and all $\textbf{x}_n\subset X.$ We convert this into a bound on $E_i^\alpha\paren{\textbf{x}_n}$ by integrating (Lemma~\ref{integral_lemma}) and using that a minimum spanning tree on $n$ points has $n-1$ edges, yielding the upper bound in Theorem~\ref{thm_mst} (Proposition~\ref{prop_upper_extremal_MST}). 

For the lower bound, we define an occupancy indicator that implies the existence of a minimum spanning tree edge of length at least $\delta,$ by requiring that a ball of radius $\delta$ is occupied and its annulus of radii $\paren{\delta,2\delta}$ is not (Lemma~\ref{lemma_ball_MST} and the preceding text).   Taking a collection of these indicators for a maximal set of disjoint balls of radius $2n^{-1/d}$ and applying Lemma~\ref{lemma_occupancy_2} gives that 
\[p\paren{\textbf{x}_n,n^{-1/d}}\geq \gamma n \]
with high probability as $n\rightarrow\infty$ for some $\gamma>0$ (Lemma~\ref{lemma_dominated_mst}). Summing over edges of length exceeding $n^{-/d}$ proves the lower bound in Theorem~\ref{thm_mst} (Proposition~\ref{prop_lower_MST}).

We use the next lemma in our proofs of both the upper and lower bounds. Let $G_{\textbf{x},\epsilon}$ be graph the with vertex set $\textbf{x}$ so that $x_1$ and $x_2$ are connected by an edge if and only if $d\paren{x_1,x_2}<\epsilon$ (this is the one-skeleton of the Vietoris-Rips complex on $\textbf{x}$). The following is a corollary of Kruskal's algorithm.
\begin{Lemma}
\label{MST_lemma}
\[p\paren{\textbf{x},\epsilon}=\beta_0\paren{G_{\textbf{x},\epsilon}}-1\]
where $\beta_0\paren{G_{\textbf{x},\epsilon}}$ is the number of connected components of $G_{\textbf{x},\epsilon}.$ 
\end{Lemma}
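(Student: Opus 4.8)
The plan is to run Kruskal's algorithm on the complete weighted graph on $\textbf{x}$, whose edge weights are the pairwise distances, and to track how its connected components evolve as the algorithm processes edges in order of increasing length. Recall that Kruskal's algorithm examines edges from shortest to longest, adjoining an edge to the forest precisely when its two endpoints lie in distinct current components and discarding it otherwise; the forest it produces is a minimum spanning tree $T\paren{\textbf{x}}$, and its components only ever merge, never split, as the algorithm proceeds.

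The key step is to show that once the algorithm has processed every edge of length strictly less than $\epsilon$, the components of the partially-built Kruskal forest coincide exactly with the connected components of $G_{\textbf{x},\epsilon}$. One inclusion is immediate: every edge adjoined up to this stage has length less than $\epsilon$ and is therefore an edge of $G_{\textbf{x},\epsilon}$. For the reverse inclusion I would argue along a path. If $x$ and $y$ are joined in $G_{\textbf{x},\epsilon}$ by a path whose consecutive vertices are at distance less than $\epsilon$, then each such edge was examined before the algorithm reached edges of length $\geq\epsilon$, and whether it was adjoined or discarded, its two endpoints lie in a common component immediately afterward (adjoining merges them, while discarding happens only because they were already merged). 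By the monotonicity of the components and induction along the path, $x$ and $y$ end up in the same Kruskal component.

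Granting this identification, the edges of $T\paren{\textbf{x}}$ of length less than $\epsilon$ form a spanning forest of $\textbf{x}$ whose components are exactly those of $G_{\textbf{x},\epsilon}$, so there are $\beta_0\paren{G_{\textbf{x},\epsilon}}$ components and hence $\abs{\textbf{x}}-\beta_0\paren{G_{\textbf{x},\epsilon}}$ such edges. Since $T\paren{\textbf{x}}$ is a tree on $\abs{\textbf{x}}$ vertices it has $\abs{\textbf{x}}-1$ edges in all, and subtracting shows that the number of edges of length at least $\epsilon$ equals $\beta_0\paren{G_{\textbf{x},\epsilon}}-1$, which is the desired formula.

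The one delicate point, and the step I would flag as the main obstacle to a fully rigorous statement, is the mismatch between the strict inequality ``length $>\epsilon$'' in the definition of $p\paren{\textbf{x},\epsilon}$ and the strict inequality ``length $<\epsilon$'' defining $G_{\textbf{x},\epsilon}$, together with the possibility that ties among pairwise distances render $T\paren{\textbf{x}}$ non-unique. Non-uniqueness is harmless, because the count of minimum-spanning-tree edges below a fixed threshold is determined by the component structure of the threshold graph and is therefore independent of which minimum spanning tree is chosen. The boundary case is resolved by the observation that in every application the points are i.i.d.\ samples from a non-atomic measure, so almost surely all pairwise distances are distinct and no edge has length exactly $\epsilon$; equivalently, the quantities in which this lemma is used are later integrated in $\delta$, so the measure-zero set of thresholds at which some edge length is attained may be ignored.
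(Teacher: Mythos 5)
Your proof is correct and takes the same route as the paper, which simply asserts the lemma as a corollary of Kruskal's algorithm without writing out the details; you have supplied exactly the component-tracking argument that assertion presupposes. Your handling of the two side issues (non-uniqueness of the minimum spanning tree and the strict-versus-non-strict boundary at length exactly $\epsilon$) is also sound and consistent with how the lemma is used later in the paper.
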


\subsection{Proof of the Upper Bound in Theorem~\ref{thm_mst}}
\label{sec_MST_upper}

\begin{Lemma}
\label{edge_counting_lemma}
Let $X$ be a metric space and suppose that there are positive real numbers $D$ and $d$ so that
\begin{equation}
\label{edge_counting_1}
M_\delta\paren{X}\leq D \, \delta^{-d}
\end{equation}
for all $\delta>0,$ where $M_\delta(X)$ is be the maximal number of disjoint open
balls of radius $\delta$ centered at points of $X$ (as defined in the previous section). Then
\[p\paren{\textbf{x},\delta}<D 2^{-d} \, \delta^{-d}\]
for all finite subsets $\textbf{x}$ of $X$ and all $\delta>0.$ 
\end{Lemma}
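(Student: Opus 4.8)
The plan is to reduce the claim to a packing estimate via Lemma~\ref{MST_lemma}. By that lemma, $p\paren{\textbf{x},\delta}=\beta_0\paren{G_{\textbf{x},\delta}}-1$, so it suffices to bound the number of connected components of the graph $G_{\textbf{x},\delta}$. The entire geometric content will be that distinct components are $\delta$-separated: if $x$ and $y$ lie in different components of $G_{\textbf{x},\delta}$, then $d\paren{x,y}\geq\delta$, since $d\paren{x,y}<\delta$ would place an edge between them and hence put them in the same component.

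Given this, I would first choose one representative point from each component; the resulting finite subset of $\textbf{x}$ is $\delta$-separated, meaning its points are pairwise at distance at least $\delta$. Next, around each representative I place the open ball of radius $\delta/2$. These balls are pairwise disjoint, since a point common to two of them would force their centers within distance $\delta/2+\delta/2=\delta$, contradicting the separation. Because the balls are centered at points of $\textbf{x}\subseteq X$, their number is at most $M_{\delta/2}\paren{\textbf{x}}\leq M_{\delta/2}\paren{X}$, and hypothesis~\eqref{edge_counting_1} gives $M_{\delta/2}\paren{X}\leq D\paren{\delta/2}^{-d}$. Combining these, $\beta_0\paren{G_{\textbf{x},\delta}}\leq D\paren{\delta/2}^{-d}$, and subtracting $1$ yields a bound of the asserted form $p\paren{\textbf{x},\delta}<C\,\delta^{-d}$ with an explicit constant.

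I do not expect a genuine obstacle: the argument is essentially a one-line observation (component separation) followed by its translation into a disjoint-ball packing controlled by the hypothesis. The only points requiring care are verifying that \emph{open} balls of radius $\delta/2$ about a $\delta$-separated set are disjoint (so one cannot use a larger radius), checking that passing from $\textbf{x}$ to the ambient space preserves the packing inequality, and bookkeeping the explicit constant together with the strict-versus-weak inequality introduced by the $-1$ in Lemma~\ref{MST_lemma}.
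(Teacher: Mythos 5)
Your proposal is correct and follows essentially the same route as the paper: both invoke Lemma~\ref{MST_lemma} to reduce the claim to counting components of $G_{\textbf{x},\delta}$, and both bound that count by a packing of disjoint open balls of radius $\delta/2$, invoking the hypothesis at scale $\delta/2$. The only cosmetic difference is that you build the packing from one representative per component (using $\delta$-separation of components), whereas the paper starts from a maximal $\delta/2$-packing of $\textbf{x}$ and uses maximality to show every component contains a center; the scale, the key lemma, and the resulting constant are the same.
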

\begin{proof}
Let $\textbf{x} \subset X$ and $\delta>0.$ Also, let $\textbf{y}$ consist of the centers of a maximal set of disjoint balls of radius $\delta/2$ centered at points of $\textbf{x}.$ The maximality of $\textbf{y}$ implies that for every $x\in\textbf{x}$ there exists a $y\in\textbf{y}$ so that $d\paren{x,y}<\delta.$ In particular, every connected component of $G_{\textbf{x},\delta}$ has a vertex that is an element of $\textbf{y}.$ Therefore,

\begin{align*}
p\paren{\textbf{x},\delta}=&\;\; \beta_0\paren{G_{\textbf{x},\delta}}-1 &&\text{by Lemma~\ref{MST_lemma}}\\
\leq & \;\; \abs{\textbf{y}}-1\\
\leq & \;\; D \paren{\delta/2}^{-d} &&\text{by Eqn.~\ref{edge_counting_1}}\\
=& \;\; 2^{-d} D \delta^{-d}\,.
\end{align*}
\end{proof}

The previous lemma controls of the number of MST edges of length greater than $\epsilon.$ We can use this to prove an upper bound for $E_0^\alpha\paren{\textbf{x}_n}$ via the following lemma of Cohen-Steiner et al.~\cite{2010cohensteiner}.
\begin{Lemma}
\label{integral_lemma}
Let $J\subset \R^+$ be a bounded set of positive real numbers and let 
\[J_{\epsilon}=\set{j\in J:j>\epsilon}\,.\]
If 
\[\abs{J_{\epsilon}} \leq f\paren{\epsilon}< \infty\]
for all $\epsilon>0$ then
\[\sum_{j\in J_\epsilon}j^\alpha \leq \epsilon^\alpha f\paren{\epsilon}+\alpha \int_{\delta=\epsilon}^{\max J} f\paren{\delta}\delta^{\alpha-1}\;d\delta\]
for all $\alpha>0.$ Furthermore, if $\abs{J}\leq f\paren{0}<\infty$ then 
\begin{equation}
\label{eqn_integral_lemma}
\sum_{j\in J}j^\alpha \leq \alpha \int_{\delta=0}^{\max J} f\paren{\delta}\delta^{\alpha-1}\;d\delta\,.
\end{equation}

\end{Lemma}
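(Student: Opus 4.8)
The plan is to express each term $j^\alpha$ as an integral via the fundamental theorem of calculus, interchange the (finite) sum with the resulting integral, and then recognize the inner counting function as $\abs{J_\delta}$, to which the hypothesis $\abs{J_\delta}\leq f\paren{\delta}$ applies. The governing identity, valid for every $j>\epsilon$, is
\[j^\alpha = \epsilon^\alpha + \alpha \int_{\delta=\epsilon}^{j} \delta^{\alpha-1}\,d\delta = \epsilon^\alpha + \alpha \int_{\delta=\epsilon}^{\max J} \delta^{\alpha-1}\,\mathbf{1}\left[\delta<j\right]\,d\delta\,,\]
and for the second part the analogous layer-cake identity $j^\alpha = \alpha \int_{\delta=0}^{\max J} \delta^{\alpha-1}\,\mathbf{1}\left[\delta<j\right]\,d\delta$ (both using $\alpha>0$).

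First I would prove the first inequality. Summing the identity above over $j\in J_\epsilon$, which is a finite set since $\abs{J_\epsilon}\leq f\paren{\epsilon}<\infty$, and moving the finite sum inside the integral gives
\[\sum_{j\in J_\epsilon} j^\alpha = \abs{J_\epsilon}\,\epsilon^\alpha + \alpha \int_{\delta=\epsilon}^{\max J} \delta^{\alpha-1}\paren{\sum_{j\in J_\epsilon}\mathbf{1}\left[\delta<j\right]}\,d\delta\,.\]
The inner sum counts the elements of $J_\epsilon$ exceeding $\delta$; for $\delta\geq\epsilon$ this count is exactly $\abs{J_\delta}$, which is bounded by $f\paren{\delta}$ by hypothesis. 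Combining this with $\abs{J_\epsilon}\leq f\paren{\epsilon}$ yields the claimed bound $\sum_{j\in J_\epsilon} j^\alpha \leq \epsilon^\alpha f\paren{\epsilon}+\alpha \int_{\epsilon}^{\max J} f\paren{\delta}\delta^{\alpha-1}\,d\delta$.

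For the second inequality I would note that $\abs{J}\leq f\paren{0}<\infty$ forces $J$ to be finite, so the same finite interchange applied to the layer-cake identity gives $\sum_{j\in J} j^\alpha = \alpha \int_{0}^{\max J} \delta^{\alpha-1}\abs{J_\delta}\,d\delta$, and bounding $\abs{J_\delta}\leq f\paren{\delta}$ finishes. (Alternatively one could let $\epsilon\to 0$ in the first bound once $\epsilon<\min J$, but the direct computation is cleaner since it avoids having to control the term $\epsilon^\alpha f\paren{\epsilon}$, which need not vanish.)

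The only point requiring attention — and it is not really an obstacle — is the interchange of summation and integration. Because $J_\epsilon$ and $J$ are finite under the respective hypotheses, each sum has finitely many terms and the exchange is merely linearity of the integral; no appeal to Tonelli's theorem or dominated convergence is needed. If one wished to drop the finiteness assumptions, the nonnegativity of the integrand $\delta^{\alpha-1}\mathbf{1}\left[\delta<j\right]$ would make Tonelli's theorem directly applicable, but the stated hypotheses render this superfluous.
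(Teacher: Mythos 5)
Your proposal is correct. It reaches the same exact identity as the paper,
\[\sum_{j\in J_\epsilon}j^\alpha=\epsilon^{\alpha}\abs{J_{\epsilon}}+\alpha\int_{\delta=\epsilon}^{\max J}\abs{J_{\delta}}\,\delta^{\alpha-1}\,d\delta\,,\]
before invoking the bound $\abs{J_\delta}\leq f\paren{\delta}$, but it derives that identity by a different mechanism: the paper (following Cohen-Steiner et al.) writes the sum as a Stieltjes integral against the distributional derivative of the counting function $\delta\mapsto\abs{J_\delta}$ and integrates by parts, whereas you expand each $j^\alpha$ by the fundamental theorem of calculus and swap the finite sum with the integral (a layer-cake/Fubini computation). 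The two are dual to one another, so neither buys extra generality here, but your version is the more elementary and self-contained: it avoids distributional derivatives and the boundary-term bookkeeping at $\delta=\infty$ entirely, the interchange is justified by mere linearity since $J_\epsilon$ (resp.\ $J$) is finite under the stated hypotheses, and your direct treatment of the second inequality cleanly sidesteps the question of whether $\epsilon^\alpha f\paren{\epsilon}$ vanishes as $\epsilon\to 0$. Your observation that for $\delta\geq\epsilon$ the inner count $\#\set{j\in J_\epsilon:j>\delta}$ equals $\abs{J_\delta}$ is exactly the point that makes the identity come out right, and you handle it correctly.
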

\begin{proof}
For completeness, we reproduce the proof in~\cite{2010cohensteiner}. $\sum_{j\in J_\epsilon}j^\alpha$ can be expressed as an integral involving the distributional derivative of $\abs{J_{\epsilon}}.$ Applying integration by parts yields:
\begin{align*}
\sum_{j\in J_\epsilon}j^\alpha=&\;\; \int_{\delta=\epsilon}^{\infty} - \frac{\partial \abs{J_{\delta}}}{\partial \delta}\delta^\alpha\; d\delta\\
=&\;\; \Big[-\abs{J_{\delta}}\delta^\alpha\Big]_{\delta=\epsilon}^{\infty}+\alpha \int_{\delta=\epsilon}^{\infty}\abs{J_{\delta}}\delta^{\alpha-1}\; d\delta\\
=&\;\; \epsilon^{\alpha}\abs{J_{\epsilon}} +\alpha \int_{\delta=\epsilon}^{\sup J}\abs{J_{\delta}}\delta^{\alpha-1}\; d\delta\\
\leq & \;\; \epsilon^\alpha f\paren{\epsilon}+\alpha \int_{\delta=\epsilon}^{\sup J} f\paren{\delta}\delta^{\alpha-1} \; d\delta\,.
\end{align*}
\end{proof}

Combining the previous two lemmas gives an an extremal upper bound for $E_\alpha^0\paren{\textbf{x}_n}$ that, when combined with Lemma~\ref{lemma_ballcount}, implies the upper bound for Theorem~\ref{thm_mst}.

\begin{Proposition}
\label{prop_upper_extremal_MST}
Let $X$ be a metric space and suppose that there are positive real numbers $D$ and $d$ so that
\[M_\delta\paren{X}\leq D\,\delta^{-d}\]
for all $\delta>0.$ If $0<\alpha<d,$ then there exists a $D_\alpha>0$ so that 
\[E^0_\alpha\paren{\textbf{x}_n} \leq D_{\alpha} \, n^{\frac{d-\alpha}{d}}\]
for all $n$ and all collections $\textbf{x}_n$ of $n$ points in $X.$ Furthermore, there exists a $D_d>0$ so that 
\[E^0_d\paren{\textbf{x}_n} \leq D_d\,\log\paren{n}\]
for all $n$ and all collections $\textbf{x}_n$ of $n$ points in $X.$
\end{Proposition}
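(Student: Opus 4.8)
The plan is to combine the extremal edge-count bound of Lemma~\ref{edge_counting_lemma} with the integral estimate of Lemma~\ref{integral_lemma}, applied to the multiset $J$ of edge lengths of $T(\textbf{x}_n)$. Since $T(\textbf{x}_n)$ has exactly $n-1$ edges we have $|J|=n-1$, and by definition $|J_\delta|=p(\textbf{x}_n,\delta)$. The crucial observation is that there are two competing upper bounds on this cumulative count: the trivial bound $p(\textbf{x}_n,\delta)\leq n-1$, and the extremal bound $p(\textbf{x}_n,\delta)<D2^{-d}\delta^{-d}$ from Lemma~\ref{edge_counting_lemma}. I would therefore apply the second form of Lemma~\ref{integral_lemma} with $f(\delta)=\min(n-1,\,D2^{-d}\delta^{-d})$, which satisfies both $|J_\delta|\leq f(\delta)$ for all $\delta>0$ and $|J|=n-1=f(0)$.

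First I would record that the two branches of $f$ agree at the crossover scale $\delta^\ast=(D2^{-d}/(n-1))^{1/d}$, which is of order $n^{-1/d}$: for $\delta<\delta^\ast$ one has $f(\delta)=n-1$, and for $\delta>\delta^\ast$ one has $f(\delta)=D2^{-d}\delta^{-d}$. Splitting the integral $\alpha\int_0^{\max J} f(\delta)\delta^{\alpha-1}\,d\delta$ at $\delta^\ast$ then yields two pieces. The first, $\alpha(n-1)\int_0^{\delta^\ast}\delta^{\alpha-1}\,d\delta=(n-1)(\delta^\ast)^\alpha$, evaluates to a constant multiple of $(n-1)^{(d-\alpha)/d}$. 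For the second piece, since $\alpha<d$ the exponent $\alpha-1-d$ is less than $-1$, so the integrand is integrable at infinity and $\alpha D2^{-d}\int_{\delta^\ast}^{\infty}\delta^{\alpha-1-d}\,d\delta=\frac{\alpha D2^{-d}}{d-\alpha}(\delta^\ast)^{\alpha-d}$, which is again a constant multiple of $(n-1)^{(d-\alpha)/d}$. Adding the two and using $(n-1)^{(d-\alpha)/d}\leq n^{(d-\alpha)/d}$ gives the claimed bound, with $D_\alpha=\frac{d}{d-\alpha}(D2^{-d})^{\alpha/d}$.

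The main point to get right — and the reason the truncation at $n-1$ is not a mere convenience — is that the extremal bound alone is useless near $\delta=0$: the integral $\int_0^{\epsilon}\delta^{\alpha-1-d}\,d\delta$ diverges for every $\alpha<d$, so one cannot simply feed $D2^{-d}\delta^{-d}$ into Lemma~\ref{integral_lemma}. Capping $f$ at the number of edges is exactly what regularizes the small-$\delta$ behaviour, and it is this cap — through the location of the crossover $\delta^\ast\sim n^{-1/d}$ — that injects the correct power of $n$ into the final estimate. I expect this interplay between the two bounds to be the only genuinely substantive step; everything else is evaluation of elementary integrals.

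For the critical exponent $\alpha=d$ the same decomposition applies, but now the first piece contributes the constant $(n-1)(\delta^\ast)^d=D2^{-d}$, while the tail integral becomes $dD2^{-d}\int_{\delta^\ast}^{\max J}\delta^{-1}\,d\delta=dD2^{-d}\log(\max J/\delta^\ast)$. Since $\log(1/\delta^\ast)=\frac{1}{d}\log(n-1)+O(1)$, this is $D2^{-d}\log(n-1)+O(1)$, giving a bound of the form $D_d\log(n)$ for large $n$. The one extra ingredient needed here is that $\max J$ is finite, i.e.\ that $X$ has finite diameter; this follows from the hypothesis, since $M_\delta(X)<\infty$ for small $\delta$ forces $X$ to be totally bounded and hence bounded.
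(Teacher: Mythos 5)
Your proof is correct and follows essentially the same route as the paper: both combine Lemma~\ref{edge_counting_lemma} with the integral bound of Lemma~\ref{integral_lemma} applied to $f(\delta)=\min\paren{n-1,\,2^{-d}D\delta^{-d}}$, split the integral at the crossover scale of order $n^{-1/d}$, and evaluate the two elementary pieces. The only cosmetic differences are that you extend the tail integral to $\infty$ (harmless for $\alpha<d$) where the paper rescales $X$ to have diameter at most $1$, and you make explicit the boundedness of $X$ needed for the $\alpha=d$ case.
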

\begin{proof}
Rescale $X$ if necessary so that its diameter is less than $1,$ and let
\[\kappa=\frac{1}{2}\paren{\frac{D}{n-1}}^{1/d}\,.\]

The previous lemma implies that
\[p\paren{\set{\textbf{x}_n},\epsilon}\leq 2^{-d}D\epsilon^{-d}\,.\]
Furthermore, 
\[p\paren{\set{\textbf{x}_n},\epsilon}\leq n-1\]
because a minimum spanning tree on $n$ points has $n-1$ edges. Combining these yields that  $p\paren{\set{\textbf{x}_n},\epsilon}\leq f\paren{\epsilon}$ where
\begin{equation}
\label{eqn_puem_1}
f\paren{\epsilon}= \min\paren{n-1,2^{-d} D \epsilon^{-d}}=
\begin{cases}
n-1 & \epsilon \leq \kappa \\
2^{-d} D \epsilon ^{-d} & \epsilon \geq \kappa \\
\end{cases}\,.
\end{equation}

We have that 
Applying Lemma~\ref{integral_lemma} to the set of edge lengths of the minimum spanning tree on $\textbf{x}_n$ yields
\begin{align*}
E^0_\alpha\paren{\textbf{x}_n}=&\;\; \sum_{e\in T\paren{\textbf{x}_n}} \abs{e}^{\alpha}\\
\leq & \;\;\alpha \int_{\delta=0}^{1} f\paren{\delta}\delta^{\alpha-1}\;d\delta &&\text{by Eqn.~\ref{eqn_integral_lemma}}\\
=& \;\; \paren{n-1} \int_{\delta=0}^{\kappa}\alpha \delta^{\alpha-1}\;d\delta+\alpha 2^{-d} D \int_{\delta=\kappa}^{1}\delta^{\alpha-d-1}\;d\delta&&\text{by Eqn.~\ref{eqn_puem_1}}\\
=&\;\; \paren{n-1}\brac{\delta^\alpha}_{\delta=0}^{\kappa}-\frac{\alpha}{d-\alpha} 2^{-d} D \brac{\delta^{\alpha-d}}_{\delta=\kappa}^{1}\\
=&\;\; \paren{n-1}\kappa^\alpha+\frac{\alpha}{d-\alpha} 2^{-d} D\paren{\kappa^{\alpha-d}-1}\\
=&\;\; 2^{-\alpha}D^{\frac{\alpha}{d}}\paren{1+D \frac{\alpha}{d-\alpha}}\paren{n-1}^{\frac{d-\alpha}{d}}-\frac{\alpha}{d-\alpha}2^{-d}D\\
\leq & \;\; D_{\alpha} n^\frac{d-\alpha}{d}\,,
\end{align*}
where
\[D_{\alpha}=2^{-\alpha}D^{\frac{\alpha}{d}}\paren{1+D \frac{\alpha}{d-\alpha}}\,.\]

The result for $\alpha=d$ follows from a similar computation. 
\end{proof}

We now prove the upper bound in Theorem~\ref{thm_mst}.
\begin{proof}[Proof of the Upper Bound in Theorem~\ref{thm_mst}]
Let $\mu$ be a $d$-Ahlfors regular measure and $0<\alpha<d,$ and let $X$ be the support of $\mu.$ By Lemma~\ref{lemma_ballcount} there is a $c>0$ so that
\[ M_{\delta}\paren{X}\leq c \, \delta^{-d}\]
for all $\delta>0.$ Therefore, by the previous lemma there exists a $D_\alpha>0$ so that
\[E^0_\alpha\paren{\textbf{x}_n} \leq D_{\alpha} \, n^{\frac{d-\alpha}{d}}\]
for all collections of $n$ points $\textbf{x}_n\subset X.$
\end{proof}

Proposition~\ref{prop_upper_extremal_MST} also implies  Proposition~\ref{prop:box}, that the $\PH_0$ dimension of a measure is bounded above by the upper box dimension (Definition~\ref{defn:upperbox}) of its support, even if it is not Ahlfors regular. 

\begin{proof}[Proof of Proposition~\ref{prop:box}]
Let $\mu$ be a measure on a metric space $X,$ $d=\text{dim}_{\text{box}}\paren{X}$ be the upper box dimension of $X,$ and $\alpha<d<d_0.$ By Definition~\ref{defn:upperbox} there is a $D>0$ so that 
\[M_\delta\paren{X}\leq D\,\delta^{-d_0}\]
for all sufficiently small $\delta.$ Therefore, by Proposition~\ref{prop_upper_extremal_MST}, there is a $D_\alpha>0$ so that 
\[E^0_\alpha\paren{\textbf{x}_n} \leq D_{\alpha} \, n^{\frac{d_0-\alpha}{d_0}}\]
for all sets of $n$ points $\textbf{x}_n\subseteq X.$

Then
\[\beta\coloneqq \limsup_{n\rightarrow\infty}\frac{\log\paren{\mathbb{E}\paren{E_{\alpha}^{i}\paren{\textbf{x}_n}}}}{\log\paren{n}}\leq  \frac{\log\paren{ n^{\frac{d_0-\alpha}{d_0}}}}{\log\paren{n}}=\frac{d_0-\alpha}{d_0}\]
and, recalling Definition~\ref{defn_phdim},
\[\text{dim}_{\PH_i^\alpha}\paren{\mu}=\frac{\alpha}{1-\beta}\leq \frac{\alpha}{1-\paren{d_0-\alpha}/d_0}=d_0\,,\]
where we have used that $\frac{d_0-\alpha}{d_0}>0.$ This inequality holds for any $d_0>d,$ so  
\[\text{dim}_{\PH_i^\alpha}\paren{\mu}\leq d=\text{dim}_{\text{box}}\paren{X}\,,\]
as desired.
\end{proof}

\subsection{Proof of the Lower Bound in Theorem~\ref{thm_mst}}
\label{sec_MST_lower}
Our strategy to prove a lower bound for the asymptotics of $E^0_\alpha\paren{\textbf{x}_n}$ is to define random variables in terms of occupancy patterns of disjoint balls of radius $2r.$ This will in turn allow us to count the number of minimum spanning tree edges of lenght at least $r.$

Let $M$ be a metric space and let $\mu$ be a $d$-Ahlfors regular measure with support $M.$ If $B$ is a ball of radius $2 r$ centered at a point $y\in M$ and $\textbf{x}$ is a finite subset of $M,$ define
\[\omega\paren{B,\textbf{x}}=\Xi\paren{\textbf{x},B \setminus B_r\paren{y},\set{B_r\paren{y}}}\,.\]
That is, $\omega\paren{B,\textbf{x}}=1$ if  $\textbf{x}$ intersects $B_r\paren{y}$ but not the annulus centered at $y$ with radii $r$ and $2r.$

\begin{figure}
\centering
\includegraphics[width=.7\textwidth]{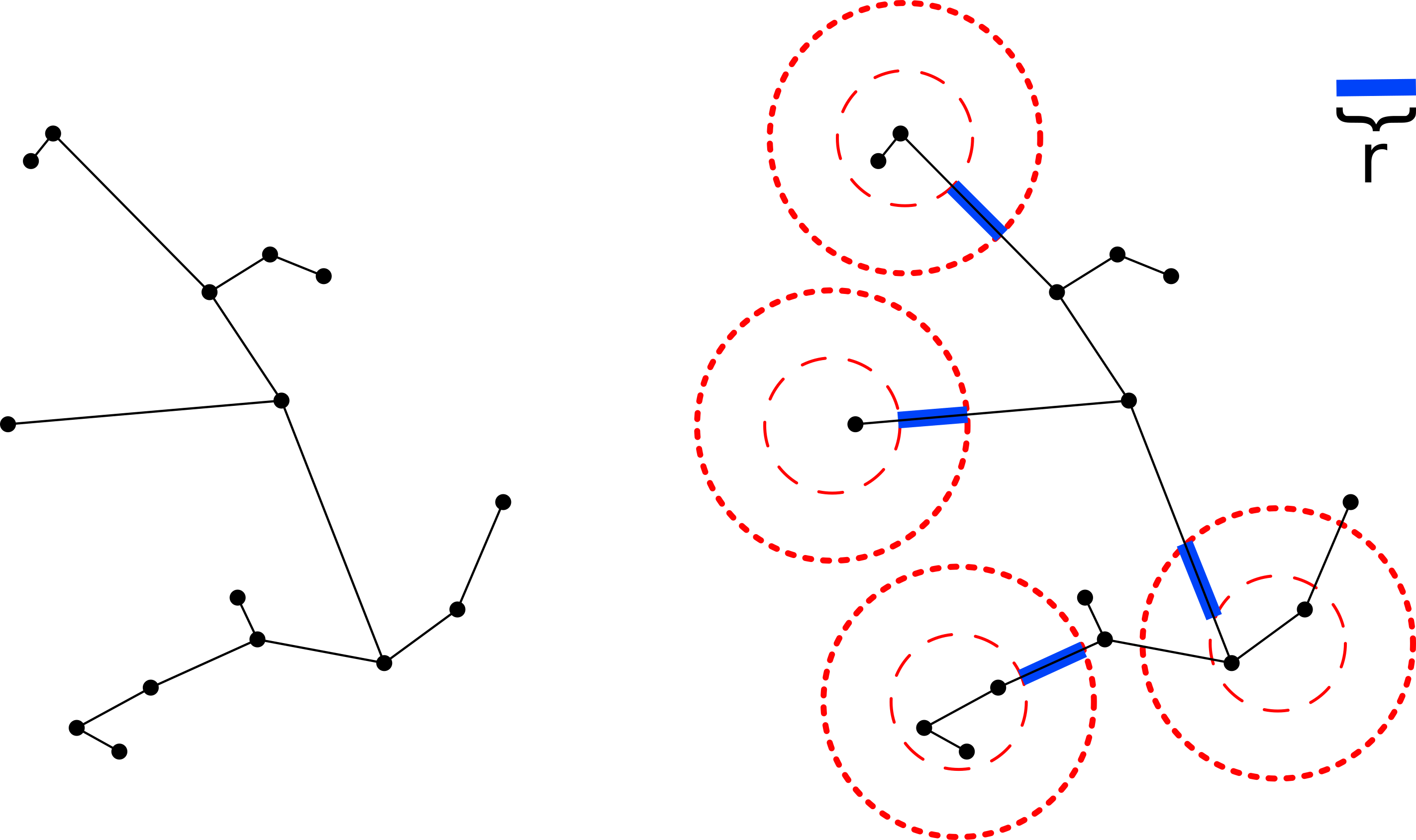}
\caption{The red balls on the right all satisfy $\omega\paren{B,\textbf{x}}=1,$ which guarantees that the minimum spanning tree on the left has at least three edges whose length exceeds $r.$}
\label{fig_mst_1}
\end{figure}

\begin{Lemma}
\label{lemma_ball_MST}
Let $\mathcal{B}$ be a set of disjoint balls of radius $2 r$ centered at points of $M,$ and let $\textbf{x}$ be a finite subset of $M.$ Then
\[p\paren{\textbf{x},r} \geq \sum_{B\in\mathcal{B}} \omega\paren{B,\textbf{x}}-1\,.\]
\end{Lemma}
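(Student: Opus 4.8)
The plan is to reduce the statement to a lower bound on the number of connected components of the proximity graph $G_{\mathbf{x},r}$ via Lemma~\ref{MST_lemma}, and then to show that each ball $B\in\mathcal{B}$ with $\omega(B,\mathbf{x})=1$ forces at least one \emph{distinct} connected component of $G_{\mathbf{x},r}$. Since Lemma~\ref{MST_lemma} gives $p(\mathbf{x},r)=\beta_0(G_{\mathbf{x},r})-1$, it suffices to prove
\[
\beta_0(G_{\mathbf{x},r})\geq \sum_{B\in\mathcal{B}}\omega(B,\mathbf{x})\,.
\]

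The crux is a confinement argument. Fix a ball $B\in\mathcal{B}$ centered at $y$ with $\omega(B,\mathbf{x})=1$. By the definition of the occupancy indicator there is a point $x_B\in\mathbf{x}\cap B_r(y)$, while $\mathbf{x}$ avoids the annulus $B\setminus B_r(y)=\set{z:r\leq d(z,y)<2r}$. I claim the connected component $C_B$ of $x_B$ in $G_{\mathbf{x},r}$ is contained entirely in $B_r(y)$. Indeed, if $x\in\mathbf{x}\cap B_r(y)$ and $x'$ is a neighbor of $x$ in $G_{\mathbf{x},r}$, so $d(x,x')<r$, then the triangle inequality gives $d(x',y)<d(x',x)+d(x,y)<r+r=2r$; since $\mathbf{x}$ contains no point of the annulus, in fact $d(x',y)<r$, i.e.\ $x'\in B_r(y)$. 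Propagating this along the edges of the component shows that every vertex reachable from $x_B$ lies in $B_r(y)$.

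Finally I would invoke disjointness. The balls in $\mathcal{B}$ have radius $2r$ and are pairwise disjoint, so a fortiori the inner balls $\set{B_r(y)}_{B\in\mathcal{B}}$ are pairwise disjoint. Hence for distinct balls $B,B'$ with $\omega=1$ the associated components $C_B\subseteq B_r(y)$ and $C_{B'}\subseteq B_r(y')$ are disjoint, and therefore are distinct connected components of $G_{\mathbf{x},r}$. This exhibits at least $\sum_{B}\omega(B,\mathbf{x})$ distinct components, which gives the displayed inequality and hence the lemma after subtracting $1$.

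The argument is elementary once the confinement step is in place; the only point requiring care is the interplay between the radius-$2r$ reach of a single $G_{\mathbf{x},r}$ edge emanating from the inner ball and the emptiness of the annulus, which together force neighbors back into $B_r(y)$ rather than merely into $B_{2r}(y)$. This is precisely where the choice of outer radius $2r$ (exactly twice the edge threshold) is used, and it is the one place where a careless estimate would break the proof.
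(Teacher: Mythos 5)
Your proof is correct and follows essentially the same route as the paper: the paper's proof simply declares the lemma an immediate consequence of Lemma~\ref{MST_lemma} (with a figure), and your confinement argument --- that the emptiness of the annulus traps each component of $G_{\mathbf{x},r}$ inside the inner ball $B_r(y)$, so disjoint balls yield distinct components --- is exactly the detail being elided there. Nothing is missing; you have merely written out what the paper leaves to the reader.
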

\begin{proof}
This is an immediate consequence of Lemma~\ref{MST_lemma}. See Figure~\ref{fig_mst_1}. 
\end{proof}

Next, we take $\mathcal{B}$ to be a maximal set of disjoint balls of radius $2n^{-1/d}$ and use Lemma~\ref{lemma_occupancy_2} to provide a lower bound for $p\paren{\textbf{x}_n,n^{-1/d}}.$

\begin{Lemma}
\label{lemma_dominated_mst}
There is a positive real number $\gamma>0$ so that 
\[p\paren{\textbf{x}_n,n^{-1/d}} \geq \gamma n\]
with high probability as $n\rightarrow\infty.$ 
\end{Lemma}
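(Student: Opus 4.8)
The plan is to express $p\paren{\textbf{x}_n,n^{-1/d}}$ as a sum of occupancy indicators $\omega\paren{B,\textbf{x}_n}$ ranging over a large family of disjoint balls, and then to apply the weak law of Lemma~\ref{lemma_occupancy_2}. Fix $r=n^{-1/d}$ and, for $n$ large enough that $2r<\delta_0,$ let $\mathcal{B}$ be a maximal collection of pairwise disjoint open balls of radius $2r$ centered at points of $M=\text{supp}\,\mu.$ First I would count these balls using the Ball-counting Lemma (Lemma~\ref{lemma_ballcount}):
\[\abs{\mathcal{B}}=M_{2r}\paren{M}\geq \frac{1}{c}2^{-d}\paren{2r}^{-d}=\frac{1}{c}2^{-2d}\,n\,,\]
so that $\abs{\mathcal{B}}\geq\floor{a\,n}$ with $a=\frac{1}{c}2^{-2d}$ for all large $n.$

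Next I would verify that $\set{\omega\paren{B,\textbf{x}_n}}_{B\in\mathcal{B}}$ is a family of disjoint, bounded occupancy indicators with a single required set, each having common distribution $\nu=\mu$ (which is non-atomic, as required by Lemma~\ref{lemma_occupancy_2}, since $\mu\paren{\set{x}}\leq c\,\delta^d$ for every $\delta<\delta_0$). Writing $y_B$ for the center of $B,$ recall that $\omega\paren{B,\textbf{x}_n}=\Xi\paren{\textbf{x}_n,B\setminus B_r\paren{y_B},\set{B_r\paren{y_B}}},$ so both the avoided set $B\setminus B_r\paren{y_B}$ and the single required set $B_r\paren{y_B}$ lie inside $B$; since the balls of $\mathcal{B}$ are pairwise disjoint, the associated indicators are disjoint as well. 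The needed measure bounds come directly from Ahlfors regularity: the required set satisfies $\mu\paren{B_r\paren{y_B}}\geq \frac{1}{c}r^d=\frac{1}{c}\,n^{-1},$ giving $p=1/c,$ while the avoided annulus satisfies $\mu\paren{B\setminus B_r\paren{y_B}}\leq \mu\paren{B_{2r}\paren{y_B}}\leq c\paren{2r}^d=c2^d\,n^{-1},$ giving $q=c2^d.$ Thus each $\omega\paren{B,\textbf{x}_n}$ is an $n,p,q,1$-bounded occupancy indicator.

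With the hypotheses checked, Lemma~\ref{lemma_occupancy_2} applied with these $a,p,q$ and a single required set produces a $\gamma'>0$ with
\[\frac{1}{n}\sum_{B\in\mathcal{B}}\omega\paren{B,\textbf{x}_n}\geq\gamma'\]
with high probability. Feeding this into Lemma~\ref{lemma_ball_MST} then gives
\[p\paren{\textbf{x}_n,n^{-1/d}}\geq \sum_{B\in\mathcal{B}}\omega\paren{B,\textbf{x}_n}-1\geq \gamma' n-1\geq \gamma\,n\]
with high probability, for any fixed $0<\gamma<\gamma'$ and all large $n,$ which is the claim.

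Each individual step is short, so the ``main obstacle'' is really bookkeeping: confirming that ball-counting supplies at least $\floor{a\,n}$ disjoint balls and that the induced indicators satisfy the hypotheses of the occupancy lemma. The one conceptual subtlety worth flagging is that $\omega\paren{B,\cdot}$ forces the \emph{entire} annulus $B_{2r}\paren{y_B}\setminus B_r\paren{y_B}$ to be empty, since this is exactly what isolates a component of $G_{\textbf{x}_n,r}$ in Lemma~\ref{lemma_ball_MST}; consequently the annulus has measure of order $1/n$ and the constant $q=c2^d$ cannot be made small by shrinking it. This does not impede the argument, because the relevant output of Lemma~\ref{lemma_occupancy_2} is the positivity of the limiting constant $\gamma'=a_0\,e^{-q}\paren{1-e^{-p}}$ with $a_0=\min\paren{a,1/\paren{p+q}},$ and this holds for any fixed positive $p$ and $q.$
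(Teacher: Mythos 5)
Your proof is correct and follows essentially the same route as the paper's: a maximal family of disjoint balls of radius $2n^{-1/d}$, the indicators $\omega\paren{B,\cdot}$ verified to be $n,p,q,1$-bounded occupancy indicators via Ahlfors regularity and Lemma~\ref{lemma_ballcount}, then Lemma~\ref{lemma_occupancy_2} followed by Lemma~\ref{lemma_ball_MST}. The only (immaterial) difference is that you bound the annulus by the full ball, taking $q=c2^d$ where the paper takes $q=2^dc-1/c$; both yield a positive limiting constant.
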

\begin{proof}
Fix $n\in\N$ and let $\epsilon=n^{-1/d}.$ Let $B^n_1,\ldots, B^n_{s_n}$ be a maximal collection of disjoint balls of radius $2\epsilon$ centered at points of $X,$ and let $y^n_j$ be the center of $B^n_j$ for $j=1,\ldots,s_n.$

Set
\[p=\frac{1}{c}\quad\text{and}\quad q=2^d c - \frac{1}{c}\,.\]
where $c$ is the constant appearing in the definition of Ahlfors regularity (Definition~\ref{defn:ahlfors}). By that definition,
\[\mu\paren{B_{\epsilon}\paren{y^n_j}}\geq p \epsilon^{d}=\frac{p}{n}\]
and
\[\mu\paren{B^n_j\setminus {B_{\epsilon}\paren{y^n_j}}}\leq c\paren{2\epsilon}^d-\frac{1}{c}\epsilon^d=\frac{q}{n}\,.\]
Also, Lemma~\ref{lemma_ballcount} implies that
\[s_n \geq \frac{1}{c} 2^{-d} {2\epsilon}^{-d}=\frac{1}{c} 2^{-2d}n\,.\]

Therefore, the occupancy indicators $\omega\paren{B^n_1,\textbf{x}_n} ,\ldots, \omega\paren{B^n_{s_n},\textbf{x}_n}$ satisfy the hypotheses of Lemma~\ref{lemma_occupancy_2} and

\[ \lim_{n\rightarrow\infty }\frac{1}{n}\sum_{j=1}^{s_n} \omega\paren{B^n_j,\textbf{x}_n} \geq \gamma\]
with high probability as $n\rightarrow\infty.$ Combining this with Lemma~\ref{lemma_ball_MST} gives the desired result.

\end{proof}

The lower bound in Theorem~\ref{thm_mst} follows quickly.
\begin{Proposition}
\label{prop_lower_MST}
Let $\mu$ be a $d$-Ahlfors regular measure on a metric space $M.$ If $\set{x_j}_{j\in\N}$ are i.i.d.~samples from $\mu,$ and $\gamma$ is as given in the previous lemma, then
\[\lim_{n\rightarrow\infty} n^{-\frac{d-\alpha}{d}} E^0_\alpha\paren{\textbf{x}_n}\geq  \gamma \]
with high probability.
\end{Proposition}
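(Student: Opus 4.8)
The plan is to derive this lower bound directly from Lemma~\ref{lemma_dominated_mst}, which supplies all of the probabilistic content; the remaining work is an essentially deterministic bookkeeping step that converts a lower bound on the number of long minimum spanning tree edges into a lower bound on the weighted sum $E^0_\alpha.$

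First I would recall that $p\paren{\textbf{x}_n,n^{-1/d}}$ counts the edges of the minimum spanning tree $T\paren{\textbf{x}_n}$ whose length exceeds $n^{-1/d},$ and that each such edge contributes at least $\paren{n^{-1/d}}^\alpha=n^{-\alpha/d}$ to $E^0_\alpha\paren{\textbf{x}_n}=\sum_{e\in T\paren{\textbf{x}_n}}\abs{e}^\alpha.$ Discarding the shorter edges only decreases the sum, since $\alpha>0$ and every term is nonnegative, so I obtain the pointwise bound
\[E^0_\alpha\paren{\textbf{x}_n}\geq p\paren{\textbf{x}_n,n^{-1/d}}\cdot n^{-\alpha/d}\,.\]
Invoking Lemma~\ref{lemma_dominated_mst}, with high probability as $n\rightarrow\infty$ we have $p\paren{\textbf{x}_n,n^{-1/d}}\geq \gamma n$; on this event
\[E^0_\alpha\paren{\textbf{x}_n}\geq \gamma n\cdot n^{-\alpha/d}=\gamma\, n^{\frac{d-\alpha}{d}}\,,\]
and hence $n^{-\frac{d-\alpha}{d}} E^0_\alpha\paren{\textbf{x}_n}\geq \gamma.$ Since the event $\set{p\paren{\textbf{x}_n,n^{-1/d}}\geq \gamma n}$ has probability tending to $1,$ so does the event $\set{n^{-\frac{d-\alpha}{d}} E^0_\alpha\paren{\textbf{x}_n}\geq \gamma},$ which is the claim.

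Because the hard probabilistic estimate has already been carried out inside Lemma~\ref{lemma_dominated_mst} --- namely the weak law of large numbers for the occupancy indicators $\omega\paren{B^n_j,\textbf{x}_n}$ supplied by Lemma~\ref{lemma_occupancy_2} --- I do not expect any substantial obstacle at this step. The only point requiring care is the reading of the limit: the conclusion should be interpreted as the statement that, for each fixed tolerance, the event $\set{n^{-\frac{d-\alpha}{d}} E^0_\alpha\paren{\textbf{x}_n}\geq \gamma}$ holds with probability approaching $1,$ i.e. that the liminf of $n^{-\frac{d-\alpha}{d}} E^0_\alpha\paren{\textbf{x}_n}$ is bounded below by $\gamma$ in probability, rather than as an almost-sure pathwise limit.
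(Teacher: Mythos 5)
Your proposal is correct and is essentially identical to the paper's proof: both drop the short edges to get the pointwise bound $E^0_\alpha\paren{\textbf{x}_n}\geq n^{-\alpha/d}\,p\paren{\textbf{x}_n,n^{-1/d}}$ and then apply Lemma~\ref{lemma_dominated_mst} to conclude. Your closing remark about how to read the ``limit with high probability'' is also the intended interpretation.
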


\begin{proof}
We have that
\begin{align*}
\lim_{n\rightarrow\infty}  n^{-\frac{d-\alpha}{d}} E_\alpha^0\paren{\textbf{x}_n}\geq& \;\; \lim_{n\rightarrow\infty} n^{-\frac{d-\alpha}{d}} \paren{n^{-1/d}}^\alpha p\paren{\textbf{x}_n,n^{-1/d}}\\
\geq & \;\; \lim_{n\rightarrow\infty} n^{-\frac{d-\alpha}{d}} n^{-\alpha/d} \paren{\gamma n} &&\text{by Lemma~\ref{lemma_dominated_mst}}\\
= & \;\; \gamma 
\end{align*}
with high probability as $n\rightarrow \infty.$ 
\end{proof}

\section{Persistent Homology}

We provide a brief introduction to the persistent homology~\cite{2002edelsbrunner} of a filtration, loosely following~\cite{2018schweinhart}. For a more in-depth survey refer to, e.g.,~\cite{2009carlsson,2016chazal,2008edelsbrunner,2013edelsbrunner,2008ghrist}. A \textbf{filtration} of topological spaces is a family $\set{X_\epsilon}_{\epsilon\in I}$ of topological spaces indexed by an ordered set $I$, with inclusion maps $X_{\epsilon_1}\hookrightarrow X_{\epsilon_2}$ for all pairs of indices $\epsilon_1<\epsilon_2.$ For example, if $X$ is a subset of a metric space $M$, the $\epsilon$--neighborhood filtration of $X,$ $\set{X_\epsilon}_{\epsilon\geq 0},$ is the family of $\epsilon$-neighborhoods of $X,$ where
\[X_{\epsilon}=\set{m\in M:d\paren{m,X}<\epsilon}\,,\]
together with inclusion maps $X_{\epsilon_1}\hookrightarrow X_{\epsilon_2}$ for $\epsilon_1<\epsilon_2.$ See Figure~\ref{fig:epsilon_neighborhood}. If $X$ is a subset of Euclidean space, this construction is homotopy equivalent to the \v{C}ech complex of $X.$ The \v{C}ech complex of a subset $X$ of a metric space is the simplicial complex defined by 
\[\paren{x_1,\ldots,x_n}\in C\paren{X,\epsilon}\quad\text{if}\quad \cap_{j=1}^n B_{\epsilon}\paren{x_j}\neq \varnothing\,.\]
Note that the \v{C}ech complex depends on the ambient metric space. For example if $p_1,p_2,p_3$ are the vertices of an acute triangle  $\mathbb{R}^m$ and the ambient space is $\mathbb{R}^m$ then the $2$-simplex $\paren{p_1,p_2,p_3}$ will enter  $C\paren{X,\epsilon}$ when $\epsilon$ equals the circumradius of the triangle. If the ambient space is $\set{p_1,p_2,p_3},$ the simplex $\paren{p_1,p_2,p_3}$ will enter the complex when $\epsilon$ equals the maximum pairwise distance between the three points. 

In Euclidean space, the Alpha complex of a finite set $\textbf{x}$ is filtration on the Deluanay triangulation on $\textbf{x}.$ We do not define the Alpha complex here; see~\cite{2002edelsbrunner} for a definition.

 Another common construction is the Vietoris--Rips complex: if $Y$ is a metric space, let $V\paren{Y,\epsilon}$ be the simplicial complex defined by
\[\paren{y_1,\ldots,y_n}\in V\paren{Y,\epsilon}\quad\text{if}\quad d\paren{y_j,y_k}<\epsilon\text{ for }j,k=1,\ldots,n\,.\]
The family $\set{V\paren{Y,\epsilon}}_{\epsilon>0}$ together with inclusion maps for $\epsilon_1<\epsilon_2$ is a filtration indexed by the positive real numbers. As noted earlier, all of our results apply to both the \v{C}ech and Vietoris--Rips complexes except for Corollaries~\ref{corollary_R2} and~\ref{corollary_rips}, though the constants may differ. We will suppress the dependence of persistent homology on the underlying filtration, unless otherwise noted.

The \textbf{persistent homology module} of a filtration is the product $\prod_{\epsilon\in I} H_i\paren{X_{\epsilon}},$ together with the homomorphisms $j_{\epsilon_0,\epsilon_1}:H_i\paren{X_{\epsilon_0}}\rightarrow H_i\paren{X_{\epsilon_1}}$ for $\epsilon_0<\epsilon_1,$ where $H_i\paren{X_\epsilon}$ denotes the reduced homology of $X_\epsilon$ with coefficients in a field. If the rank of $i_{\epsilon_0,\epsilon_1}$ is finite for all $\epsilon_0<\epsilon_1,$ --- a hypothesis satisfied by all filtrations considered in this paper~\cite{2014chazal,2016chazal} --- the persistent homology module decomposes canonically into a set of interval modules~\cite{2009chazal,2005zomorodian}. We denote the colletion of these intervals as  $\PH_i\paren{X};$  each interval $\paren{b,d}\in\PH_i\paren{X}$ corresponds to a homology generator that is ``born'' at $\epsilon=b$ and ``dies'' at $\epsilon=d.$

If $\textbf{x}$ is a finite metric space and persistent homology is taken with respect to the Vietoris--Rips complex, Kruskal's algorithm implies that there is a length-preserving bijection between intervals of $\PH_0\paren{\textbf{x}}$ and the edges of the minimum spanning tree on $\textbf{x}.$ The same is true if persistent homology is taken with respect to the \v{C}ech complex if the ambient space is $\mathbb{R}^m,$ except that an interval is matched with an edge of twice its length.

\subsection{Properties of Persistent Homology}
\label{sec_properties}
Let $X$ be a metric space. For each $\epsilon>0,$ let $\PH_i^\epsilon\paren{X}$ denote the set of intervals of $\PH_i\paren{X}$ of length greater than $\epsilon$: 
\[\PH_i^\epsilon\paren{X}=\set{I\in\PH_i\paren{X}:\abs{I}>\epsilon}\,.\]
Also, define
\begin{equation}
\label{eqn:pi}
p_i\paren{X,\epsilon}=\abs{\PH_i^\epsilon\paren{X}}\,.
\end{equation}

If $X,Y\subset X$, let $d_H\paren{X,Y}$ denote the Hausdorff distance between $X$ and $Y$:
\[d_H\paren{X,Y}=\inf\set{\epsilon\geq 0 :  Y\subseteq X_\epsilon \quad \text{and} \quad X\subset Y_\epsilon}\,.\]
Also, let $d\paren{X,Y}$ be the infimal distance between pairs of points, one in each set:
\[d\paren{X,Y}=\inf_{x\in X, y\in Y}d\paren{x,y}\,.\]

We use the following properties of persistent homology in our proofs:

\begin{enumerate}
\item \textbf{Stability}: If $d_H\paren{X,Y}<\epsilon,$ there is an injection
\[\eta:\PH_i^{2\epsilon}\paren{X}\rightarrow \PH_i\paren{Y}\]
so that if $\eta\left(\left(b_0,d_0\right)\right)=\left(b_1,d_1\right)$ then
\[\max\paren{\abs{b_0-b_1},\abs{d_0-d_1}}<\epsilon\,.\]
 In particular, 
\[p_i\paren{X,2\epsilon+\delta}\leq p_i\paren{Y,\delta}\]
for all $\delta\geq 0.$ ~\cite{2014chazal,2007cohensteiner}
\item \textbf{Additivity for well-separated sets}: If $X_1,\ldots,X_n\subset M$ and

\[d\paren{X_j,X_k} > \text{max}\paren{\text{diam}\,X_j,\text{diam}\,X_k}\paren{1-\delta_{j,k}}\quad \forall j,k\]
then 
\[p_i\paren{\cup_j X_j,\epsilon}\geq \sum_j p_i\paren{X_j,\epsilon}\,.\]
\item \textbf{Translation invariance}: $\PH_i\paren{X}=\PH_i\paren{X+t}$ for all $t\in \R^m.$ 
\item \textbf{Scaling}: For all $\rho>0,$ 
\[\PH_i\paren{\rho\,X}=\set{\paren{\rho b,\rho d}: \paren{b,d}\in \PH_i\paren{X}}\,.\]
\end{enumerate}

We use property (1) in our proofs of both the upper and lower bounds in Theorems~\ref{thm_extremal} and~\ref{thm_probabalistic}, and property (2) for our proof of the lower bound. For these results, we also require a non-triviality property (as in Definition~\ref{defn_nontrivial}) and an upper bound for the number of $i$-dimensional persistent homology intervals of a set of $n$ points.

\subsection{A Lemma}
\label{sec_truncated}
If $X$ is a metric space, let $F_\alpha^i\paren{X,\epsilon}$ denote the $\alpha$-weighted sum of the persistent homology intervals of $X$ of length greater than $\epsilon:$

\[F_\alpha^i\paren{X,\epsilon}=\sum_{I\in\PH_i^\epsilon\paren{X}}\abs{I}^\alpha\,.\]
We will use the following lemma in the next section.
\begin{Lemma}
\label{lemma_HB1}
If $d_H\paren{X,Y}<\epsilon/4$
then 
\[F_\alpha^i \paren{X,\epsilon}< 2^\alpha F_\alpha^i \paren{Y,\epsilon/2}\,.\]
\end{Lemma}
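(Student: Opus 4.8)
The plan is to invoke the stability property (property (1) of Section~\ref{sec_properties}) with slack parameter $\epsilon/4$ and then track how the length of a long interval of $X$ is distorted under the resulting injection. First I would note that $d_H\paren{X,Y}<\epsilon/4$, so stability supplies an injection $\eta:\PH_i^{\epsilon/2}\paren{X}\rightarrow\PH_i\paren{Y}$ (here $2\cdot\paren{\epsilon/4}=\epsilon/2$) with the property that if $\eta\paren{\paren{b_0,d_0}}=\paren{b_1,d_1}$ then $\max\paren{\abs{b_0-b_1},\abs{d_0-d_1}}<\epsilon/4$. Every interval $I\in\PH_i^{\epsilon}\paren{X}$ has $\abs{I}>\epsilon>\epsilon/2$, so $\PH_i^{\epsilon}\paren{X}\subseteq\PH_i^{\epsilon/2}\paren{X}$ and $\eta$ is defined on all such $I$.

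Next I would control the length of the image. Writing $I=\paren{b_0,d_0}$ and $\eta\paren{I}=\paren{b_1,d_1}$, the endpoint bounds give $\abs{\eta\paren{I}}=d_1-b_1>\paren{d_0-b_0}-\epsilon/2=\abs{I}-\epsilon/2$. Two consequences follow. Since $\abs{I}>\epsilon$, we get $\abs{\eta\paren{I}}>\epsilon/2$, so $\eta\paren{I}\in\PH_i^{\epsilon/2}\paren{Y}$ and $\eta$ restricts to an injection $\PH_i^{\epsilon}\paren{X}\hookrightarrow\PH_i^{\epsilon/2}\paren{Y}$. Moreover, again using $\abs{I}>\epsilon$ (hence $\epsilon/2<\abs{I}/2$), we obtain $\abs{\eta\paren{I}}>\abs{I}-\abs{I}/2=\abs{I}/2$, so $\abs{I}<2\abs{\eta\paren{I}}$ and, raising to the power $\alpha>0$, $\abs{I}^\alpha<2^\alpha\abs{\eta\paren{I}}^\alpha$.

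Finally I would sum over $I\in\PH_i^{\epsilon}\paren{X}$ and use injectivity:
\[F_\alpha^i\paren{X,\epsilon}=\sum_{I\in\PH_i^{\epsilon}\paren{X}}\abs{I}^\alpha<2^\alpha\sum_{I\in\PH_i^{\epsilon}\paren{X}}\abs{\eta\paren{I}}^\alpha\leq 2^\alpha\sum_{J\in\PH_i^{\epsilon/2}\paren{Y}}\abs{J}^\alpha=2^\alpha F_\alpha^i\paren{Y,\epsilon/2}\,,\]
the last inequality holding because $\eta$ maps $\PH_i^{\epsilon}\paren{X}$ injectively into $\PH_i^{\epsilon/2}\paren{Y}$ and all summands are nonnegative.

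The argument is essentially bookkeeping once the stability injection is in hand, so I do not anticipate a genuine obstacle. The one point requiring care is the origin of the constant $2^\alpha$: the raw estimate $\abs{\eta\paren{I}}>\abs{I}-\epsilon/2$ is only an additive bound, and it is the hypothesis $\abs{I}>\epsilon$ that upgrades it to the multiplicative bound $\abs{\eta\paren{I}}>\abs{I}/2$. This is precisely why the truncation threshold on $X$ must be $\epsilon$ while that on $Y$ is relaxed to $\epsilon/2$, and why the slack $\epsilon/4$ is the right choice. I would also record that the strict inequality holds whenever $\PH_i^{\epsilon}\paren{X}$ is nonempty, since a single strictly-dominated term forces strictness of the sum; if $\PH_i^{\epsilon}\paren{X}$ is empty both sides vanish.
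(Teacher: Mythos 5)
Your proof is correct and follows essentially the same route as the paper's: invoke stability with slack $\epsilon/4$ to obtain the injection $\eta$, deduce $\abs{I}<\abs{\eta\paren{I}}+\epsilon/2\leq 2\abs{\eta\paren{I}}$ for $I\in\PH_i^\epsilon\paren{X}$, and sum. You merely spell out the bookkeeping (that the image lands in $\PH_i^{\epsilon/2}\paren{Y}$ and why $\abs{I}>\epsilon$ upgrades the additive bound to a multiplicative one) that the paper leaves implicit.
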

\begin{proof}
By stability, there is an injection
\[\eta:\PH_i^\epsilon\paren{X}\rightarrow \PH_i^{\epsilon/2}\paren{Y}\]
so that for all  $I\in \PH_i^\epsilon\paren{X}$ 
\[\abs{I}<\abs{\eta\paren{I}}+\epsilon/2\leq 2\abs{\eta\paren{I}}\,.\]
It follows that
\begin{align*}
F_\alpha^i\paren{X,\epsilon}=&\;\;\sum_{I \in \PH_i^\epsilon\paren{X}} \abs{I}^\alpha\\
< & \;\; \sum_{I \in \PH_i^\epsilon\paren{X}} 2^\alpha \abs{\eta\paren{I}}^\alpha\\
\leq &\;\; 2^\alpha \sum_{J \in \PH_i^{\epsilon/2}\paren{Y}}\abs{J}^\alpha\\
=&\;\; 2^\alpha F_\alpha^i \paren{Y,\epsilon/2}\,.
\end{align*}
\end{proof}

\section{Upper Bounds}

In this section, we prove the upper bounds for in Theorems~\ref{thm_extremal} and~\ref{thm_probabalistic}, where the former assumes extremal hypotheses on the number of intervals of $\PH_i\paren{\textbf{x}_n}$ and the latter assumes that the expectation and variance of the number of intervals behave nicely.

In the extremal case, we closely follow the approach of Section~\ref{sec_MST_upper}. Instead of using that a minimum spanning tree on $n$ points has $n-1$ edges, we assume that the number of intervals of $\PH_i\paren{\textbf{x}_n}$ is bounded above by $D n^a$ for some constant $D.$ We control $p_i\paren{\textbf{x}_n,\delta}$ by approximating $\textbf{x}_n$ by with the centers of a maximal set of disjoint balls of radius $\delta/2$ and applying stability (Lemma~\ref{interval_counting_lemma}). The asymptotics we found in the ball-counting lemma above (Lemma~\ref{lemma_ballcount}) then imply that $p_i\paren{\textbf{x}_n,\delta}\leq C\delta^{-ad}$ for some constant $C>0$ and all $\textbf{x}_n\subset X.$ We convert this into a bound on $E_i^\alpha\paren{\textbf{x}_n}$ by integrating (using Lemma~\ref{integral_lemma}) and again using our assumption on the number of intervals, yielding the upper bound in Theorem~\ref{thm_extremal} (Proposition~\ref{prop_upper_extremal}). 

While the extremal hypotheses  allow us to prove the desired upper bound in Corollary~\ref{corollary_R2}, they are inadequate to show a similar upper bound for subsets of higher dimensional Euclidean space. Instead, we show that we can obtain a better upper bound on the scaling of $E_i^\alpha\paren{\textbf{x}_n}$ by assuming that  
\begin{equation}
\label{eqn_expecation}
\mathbb{E}\paren{\abs{\PH_i\paren{\textbf{x}_n}}}=O\paren{n}
\end{equation}
and
\[\textit{Var}\paren{\abs{\PH_i\paren{\textbf{x}_n}}}/n^2\rightarrow 0\,,\]
which are quantities that can be estimated in practice during the course of data analysis. We use Equation~\ref{eqn_expecation} to control the persistent homology of the support of the measure ($X$)  by approximating $X$ by a point sample in Lemma~\ref{lemma_hausdorff_probability} and applying stability in Lemma~\ref{lemma_bootstrap}, resulting in Proposition~\ref{prop_selfHomology} on the asymptotics of truncated $\alpha$-weighted sums for $\PH_i\paren{X}.$ With that, we write  $\PH_i\paren{\textbf{x}_n}$ a sum of two terms, one which approximates $\PH_i\paren{X}$ and one which corresponds to ``$d$-dimensional noise'' at a certain scale. Controlling both terms gives a proof of the upper bound in Theorem~\ref{thm_probabalistic} (Proposition~\ref{prop_upper_prob}).

\subsection{Extremal Hypotheses}
\label{sec_extremal}

First, we prove the upper bound in Theorem~\ref{thm_extremal}, which implies the upper bound for our result on measures supported on a subset of $\mathbb{R}^2$ (Corollary~\ref{corollary_R2}). The next lemma uses bottleneck stability to convert an extremal bound on the number of persistent homology intervals of a set of $n$ points in a metric space $X$ into a bound on the number of intervals of length greater than $\epsilon$ for any $Y\subseteq X.$ It is the analogue of Lemma~\ref{edge_counting_lemma} for higher dimensional persistent homology.

\begin{Lemma}[Interval Counting Lemma]
\label{interval_counting_lemma}
If $X$ is a bounded metric space so that 
\[\abs{\PH_i\paren{x_1,\ldots,x_n}}<D n^a\,.\]
for some positive real numbers $a$ and $D$ and all finite subsets $\set{x_1,\ldots,x_n}$ of $X,$ then
\[p_i\paren{Y,\epsilon}<D' \epsilon^{-a d}\]
for some $D'>0,$ all $Y\subseteq X,$ and all $\epsilon>0.$
\end{Lemma}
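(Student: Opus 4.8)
The plan is to follow the proof of Lemma~\ref{edge_counting_lemma}, but to replace the connected-components argument (which is special to $\PH_0$) with an application of the bottleneck stability property (1) from Section~\ref{sec_properties}. The strategy is to approximate an arbitrary $Y\subseteq X$ by a finite net $\textbf{y}$ to which the extremal hypothesis $\abs{\PH_i\paren{\textbf{y}}}<D\abs{\textbf{y}}^a$ applies, to use stability to bound the number of long intervals of $Y$ by the total number of intervals of $\textbf{y}$, and then to control $\abs{\textbf{y}}$ with the ball-counting bound $M_\delta\paren{X}=O\paren{\delta^{-d}}$ of Lemma~\ref{lemma_ballcount}. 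The exponent $d$ in the conclusion is precisely the one supplied by this ball-counting bound; I take $X$ to satisfy $M_\delta\paren{X}\leq c\,\delta^{-d}$, as it does when $X$ is the support of a $d$-Ahlfors regular measure (Lemma~\ref{lemma_ballcount}).

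Concretely, I would fix $Y\subseteq X$ and $\epsilon>0$ and let $\textbf{y}$ be the centers of a maximal collection of disjoint open balls of radius $\epsilon/4$ centered at points of $Y$, so that $\abs{\textbf{y}}=M_{\epsilon/4}\paren{Y}$. Maximality forces the balls of radius $\epsilon/2$ about the points of $\textbf{y}$ to cover $Y$, whence $d_H\paren{Y,\textbf{y}}\leq \epsilon/2$. Stability then yields, for every $\epsilon'>\epsilon/2$, an injection $\PH_i^{2\epsilon'}\paren{Y}\hookrightarrow \PH_i\paren{\textbf{y}}$ and hence $p_i\paren{Y,2\epsilon'}\leq \abs{\PH_i\paren{\textbf{y}}}$; letting $\epsilon'\downarrow \epsilon/2$ and using that $p_i\paren{Y,2\epsilon'}$ increases to $p_i\paren{Y,\epsilon}$ gives $p_i\paren{Y,\epsilon}\leq \abs{\PH_i\paren{\textbf{y}}}$. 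Since $\textbf{y}$ is a finite subset of $X$, the extremal hypothesis gives $\abs{\PH_i\paren{\textbf{y}}}<D\abs{\textbf{y}}^a$, and since $Y\subseteq X$ we have $\abs{\textbf{y}}=M_{\epsilon/4}\paren{Y}\leq M_{\epsilon/4}\paren{X}\leq c\paren{\epsilon/4}^{-d}$. Chaining these inequalities produces $p_i\paren{Y,\epsilon}<D\paren{c\,4^{d}}^{a}\epsilon^{-ad}=D'\epsilon^{-ad}$ with $D'=D\,c^{a}4^{ad}$, as desired.

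The step I expect to require the most care is the passage through stability: the stability statement needs the strict inequality $d_H\paren{Y,\textbf{y}}<\epsilon'$, whereas the covering argument only gives $d_H\paren{Y,\textbf{y}}\leq \epsilon/2$, so I route around this with the limiting argument above (taking $\epsilon'>\epsilon/2$ and sending $\epsilon'\downarrow\epsilon/2$), which is legitimate because $p_i\paren{Y,\cdot}$ is monotone and integer-valued. The only other point to verify is that the extremal hypothesis is genuinely being applied to the finite net $\textbf{y}$ (a finite subset of $X$) rather than to the possibly infinite set $Y$; everything else is bookkeeping of constants.
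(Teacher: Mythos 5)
Your proposal is correct and follows essentially the same route as the paper: approximate $Y$ by the centers of a maximal set of disjoint $\epsilon/4$-balls, apply bottleneck stability to bound $p_i\paren{Y,\epsilon}$ by the total interval count of the net, invoke the extremal hypothesis on that finite net, and control its cardinality via the ball-counting bound $M_{\epsilon/4}\paren{X}\leq c\paren{\epsilon/4}^{-d}$. Your extra care with the strict-versus-non-strict Hausdorff inequality, and your explicit observation that the exponent $d$ enters through the (implicit) assumption that $X$ supports a $d$-Ahlfors regular measure, are both sound refinements of what the paper does tacitly.
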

\begin{proof}
Recall from Equation~\ref{eqn:pi} that $p_i\paren{Y,\epsilon}$ is the number of intervals of $\PH_i\paren{Y}$ of length greater than $\epsilon.$

Let $Y\subseteq X,$ $\epsilon>0,$ and $\set{y_j}$ be the centers of a maximal set of disjoint balls of radius $\epsilon/4$ centered at points of $Y.$ The balls of radius $\epsilon/2$ centered at the points $\set{y_j}$ cover $Y$ so
\[d_H\paren{\set{y_i},Y}<\epsilon/2\]

It follows that
\begin{align*}
p_i\paren{Y,\epsilon}\leq &\;\;p_i\paren{\set{y_i},0} &&\text{by stability} \\
\leq & \;\; D \abs{y_i}^a &&\text{by hypothesis}\\
\leq & \;\; D M_{\epsilon/4}\paren{X}^a \\
\leq &  \;\; D c^a 4^{-a/d} \epsilon^{-ad} &&\text{by Lemma~\ref{lemma_ballcount}}\\
\end{align*}
as desired.
\end{proof}

The next proposition is the analogue of Proposition~\ref{prop_upper_extremal_MST}. The proof is nearly identical, and we do not repeat it here. 

\begin{Proposition}
\label{prop_upper_extremal}
If $X$ satisfies the hypotheses of the previous lemma and $\alpha < ad,$ then there exists a $D_\alpha>0$ so that
\[E_\alpha^i\paren{x_1,\ldots,x_n} \leq D_\alpha n^{\frac{a d-\alpha}{d}}\]
for all finite subsets $\set{x_1,\ldots,x_n}\subset X$ and all $n\in \N.$ Furthermore there exists a $D_d>0$ so that 
\[E_{ad}^{i}\paren{x_1,\ldots,x_n}\leq D_d \log\paren{n}\]
for all finite subsets $\set{x_1,\ldots,x_n}\subset X$ and all $n\in \N.$

\end{Proposition}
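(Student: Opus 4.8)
The plan is to mirror the proof of Proposition~\ref{prop_upper_extremal_MST} exactly, substituting the Interval Counting Lemma (Lemma~\ref{interval_counting_lemma}) for Lemma~\ref{edge_counting_lemma} and replacing the trivial ``$n-1$ edges'' bound on the number of minimum spanning tree edges with the extremal hypothesis $\abs{\PH_i\paren{\textbf{x}_n}}<Dn^a.$ Concretely, I would combine the two available bounds on $p_i\paren{\textbf{x}_n,\delta}$: Lemma~\ref{interval_counting_lemma} gives $p_i\paren{\textbf{x}_n,\delta}<D'\delta^{-ad}$ for all $\delta>0,$ while the hypothesis on the total number of intervals gives $p_i\paren{\textbf{x}_n,\delta}\leq Dn^a.$ Taking the minimum of these two and locating the crossover scale $\kappa$ at which they agree, namely $\kappa\approx\paren{D'/(Dn^a)}^{1/(ad)},$ produces a counting function $f\paren{\delta}$ that equals the constant $Dn^a$ for $\delta\leq\kappa$ and equals $D'\delta^{-ad}$ for $\delta\geq\kappa.$

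The second step is to feed this $f$ into the integral estimate of Lemma~\ref{integral_lemma}, applied to the multiset of persistent homology interval lengths of $\textbf{x}_n$ (which is bounded in size by $Dn^a<\infty,$ so the hypothesis $\abs{J}\leq f\paren{0}$ is satisfied after noting $f$ is constant near $0$). This yields
\[
E_\alpha^i\paren{\textbf{x}_n}\leq\alpha\int_{0}^{\mathrm{diam}\,X}f\paren{\delta}\delta^{\alpha-1}\,d\delta
=Dn^a\int_{0}^{\kappa}\alpha\delta^{\alpha-1}\,d\delta+\alpha D'\int_{\kappa}^{1}\delta^{\alpha-ad-1}\,d\delta,
\]
after rescaling so that $\mathrm{diam}\,X<1.$ The first integral evaluates to $Dn^a\kappa^\alpha,$ and the second integral converges precisely because the hypothesis $\alpha<ad$ makes the exponent $\alpha-ad-1<-1,$ giving a term proportional to $\kappa^{\alpha-ad}$ plus a bounded constant. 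Substituting $\kappa\approx n^{-1/d}$ (up to constants) into $n^a\kappa^\alpha$ and $\kappa^{\alpha-ad}$ both produce the scaling $n^{(ad-\alpha)/d}=n^a\cdot n^{-\alpha/d},$ so collecting constants gives the claimed $E_\alpha^i\paren{\textbf{x}_n}\leq D_\alpha n^{\frac{ad-\alpha}{d}}.$ The boundary case $\alpha=ad$ follows identically except that the second integrand becomes $\delta^{-1},$ whose integral from $\kappa$ to $1$ is $-\log\kappa\approx\frac{1}{d}\log n,$ producing the logarithmic bound $E_{ad}^i\paren{\textbf{x}_n}\leq D_d\log\paren{n}.$

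Since the paper explicitly states the proof is ``nearly identical'' to that of Proposition~\ref{prop_upper_extremal_MST}, there is no genuine obstacle here; the only points requiring care are bookkeeping ones. I would verify that the crossover scale $\kappa$ is chosen consistently (equating $Dn^a=D'\kappa^{-ad}$), that the integral convergence genuinely uses $\alpha<ad$ rather than $\alpha<d,$ and that the extremal hypothesis is invoked at both places it is needed—once to bound $p_i$ below the crossover (the flat part of $f$) and once to justify applying the integral lemma to a finite multiset. The substantive mathematical content was already discharged in establishing Lemma~\ref{interval_counting_lemma} and Lemma~\ref{integral_lemma}; this proposition is purely their combination.
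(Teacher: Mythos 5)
Your proposal is correct and follows exactly the route the paper intends: the paper declines to write out this proof, stating only that it is ``nearly identical'' to that of Proposition~\ref{prop_upper_extremal_MST}, and your argument is precisely that adaptation, with the crossover scale $\kappa\propto n^{-1/d}$ obtained by equating $Dn^a$ with $D'\kappa^{-ad}$, both resulting terms scaling as $n^{(ad-\alpha)/d}$, and the logarithm appearing at $\alpha=ad$. The only nitpick is phrasing: the integral $\int_\kappa^1\delta^{\alpha-ad-1}\,d\delta$ always converges since $\kappa>0$; what the hypothesis $\alpha<ad$ actually controls is that its value scales like $\kappa^{\alpha-ad}\propto n^{(ad-\alpha)/d}$ rather than being bounded, which is how you in fact use it.
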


We now prove the upper bound in Theorem~\ref{thm_extremal}.
\begin{proof}[Proof of the Upper Bound in Theorem~\ref{thm_extremal}]
Let $\mu$ be a $d$-Ahlfors regular measure and let $X$ be the support of $\mu.$   Assume that there are positive real numbers $D$ and $a$ so that 
\[\abs{\PH_i\paren{\textbf{x}_n}}<D n^a\]
for all finite subsets of $X,$ an let $0<\alpha<ad.$

By Lemma~\ref{lemma_ballcount} there is a $c>0$ so that
\[ M_{\delta}\paren{X}\leq c \, \delta^{-d}\]
for all $\delta>0.$ Therefore, by the previous lemma there exists a $D_\alpha>0$ so that
\[E^0_\alpha\paren{\textbf{x}_n} \leq D_{\alpha} \, n^{\frac{ad-\alpha}{d}}\]
for all collections of $n$ points $\textbf{x}_n\subset X.$
\end{proof}

\subsection{Probabilistic Hypotheses}
While the extremal hypotheses of the previous section allow us to prove the desired upper bound in Corollary~\ref{corollary_R2}, they are inadequate to show a similar upper bound for subsets of higher dimensional Euclidean space.  Here, we show that hypotheses on the the expectation and variance of the number of $\PH_i$ intervals of a set of $n$ points imply better asymptotic upper bounds (the upper bound in Theorem~\ref{thm_probabalistic}). The idea of the proof is to control the behavior of the support of the measure ($\PH_i\left(X\right)$) in terms of the persistent homology of point samples from $X.$ With that, we write  $\PH_i\paren{\textbf{x}_n}$ a sum of two terms, one which approximates $\PH_i\paren{X}$ and one which corresponds to ``$d$-dimensional noise'' at a certain scale.

 First, we require the following lemma, which follows from a standard argument using the union bound; see~\cite{2008niyogi} for a proof.
\begin{Lemma}
\label{lemma_union}
Let $\mu$ be a probability measure on a metric space $X,$ and $\set{B_j}_{j=1}^l\subset X$ be a collection of balls so that so that $\mu\paren{B_j}\geq a$ for all $j.$ Then 
\[\mathbb{P}\paren{\textbf{x}_n \cap B_j \neq \varnothing\quad\text{for}\quad j=1,\ldots,l}\geq 1- l e^{-a n}\,.\]
\end{Lemma}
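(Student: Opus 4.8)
The plan is to bound the complementary event --- that \emph{some} ball $B_j$ fails to be hit by any of the $n$ samples --- and then pass to the complement. First I would fix a single ball $B_j$ and estimate the probability that it is missed by all of $\textbf{x}_n$. Since the samples are i.i.d.~with common distribution $\mu$, each one independently lands outside $B_j$ with probability $1-\mu\paren{B_j}$, so the probability that the entire sample avoids $B_j$ is $\paren{1-\mu\paren{B_j}}^n$. Using the hypothesis $\mu\paren{B_j}\geq a$ together with the elementary inequality $1-t\leq e^{-t}$, this is bounded above by $\paren{1-a}^n\leq e^{-an}$.

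The second step is a union bound over the $l$ balls. The event that at least one ball is missed is the union of the $l$ events ``$B_j$ is missed,'' so subadditivity of probability gives that its probability is at most $\sum_{j=1}^l e^{-an}=l\,e^{-an}$. Taking complements yields
\[\mathbb{P}\paren{\textbf{x}_n\cap B_j\neq\varnothing\text{ for }j=1,\ldots,l}\geq 1-l\,e^{-an}\,,\]
which is exactly the claimed bound.

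There is no substantive obstacle here; the argument is entirely routine, and the only points deserving a moment's attention are organizational. The first is that it is precisely the independence of the $n$ samples that converts the single-trial miss probability $1-\mu\paren{B_j}$ into its $n$-th power. The second is that the balls are \emph{not} assumed disjoint, and indeed the union bound requires no such disjointness --- the events ``$B_j$ is missed'' may overlap arbitrarily without affecting the estimate --- so no additional hypothesis on the configuration $\set{B_j}_{j=1}^l$ is needed.
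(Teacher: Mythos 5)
Your proof is correct and is precisely the ``standard argument using the union bound'' that the paper invokes (deferring the details to Niyogi et al.): bound the miss probability of each ball by $\paren{1-a}^n\leq e^{-an}$ using independence, apply subadditivity over the $l$ balls, and take complements. Nothing further is needed.
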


Next, we apply the previous lemma to control the Hausdorff distance between $X$ and finite samples from an Ahlfors regular measure on $X.$

\begin{Lemma}
\label{lemma_hausdorff_probability} 
If $\mu$ is a $d$-Ahlfors regular measure with support $X$ then there exists a positive real number $A_0$ depending only on the constants $c$ and $d$ appearing in the definition of Ahlfors regularity so that
\begin{equation}
\label{eqn_lemma_hausdorff_probability}
\mathbb{P}\paren{d_H\paren{\set{\textbf{x}_n},X}<\epsilon}\geq 1- c \epsilon^{-d} e^{-A_0 \epsilon^{d}n}
\end{equation}
for all $\epsilon>0.$
\end{Lemma}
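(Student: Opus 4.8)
The plan is to recognize $d_H\paren{\set{\textbf{x}_n},X}<\epsilon$ as an $\epsilon$-density event and control it with the union bound of Lemma~\ref{lemma_union}. Since $\set{\textbf{x}_n}\subseteq X$, the containment $\set{\textbf{x}_n}\subseteq X_\epsilon$ is automatic, so the only thing to verify is $X\subseteq\paren{\set{\textbf{x}_n}}_\epsilon$, i.e.\ that every point of $X$ lies within $\epsilon$ of some sample point. I would reduce this to a finite hitting condition: if a suitable finite family of balls is each met by the sample, then the sample is $\epsilon$-dense in $X$.

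First I would fix a scale $\rho$ proportional to $\epsilon$ and let $\set{y_j}_{j=1}^{l}$ be the centers of a maximal collection of disjoint balls of radius $\rho$ centered at points of $X$. Maximality forces the balls $B_{2\rho}\paren{y_j}$ to cover $X$, while disjointness together with the Ball-counting Lemma (Lemma~\ref{lemma_ballcount}) bounds the number of centers, $l\leq M_\rho\paren{X}\leq c\,\rho^{-d}$. Ahlfors regularity (Definition~\ref{defn:ahlfors}) bounds each measure from below, $\mu\paren{B_\rho\paren{y_j}}\geq \tfrac{1}{c}\rho^d$. The geometric point is that if the sample meets every $B_\rho\paren{y_j}$, then any $z\in X$ has a center $y_j$ with $d\paren{z,y_j}<2\rho$ and a sample point $x\in B_\rho\paren{y_j}$, so $d\paren{z,x}<3\rho$; taking $\rho=\epsilon/3$ makes this event imply $d_H\paren{\set{\textbf{x}_n},X}<\epsilon$.

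With this reduction in hand, I would apply Lemma~\ref{lemma_union} to the collection $\set{B_\rho\paren{y_j}}_{j=1}^l$ with $a=\tfrac1c\rho^d$ and $l\leq c\rho^{-d}$, obtaining
\[\mathbb{P}\paren{d_H\paren{\set{\textbf{x}_n},X}<\epsilon}\geq 1-l\,e^{-an}\geq 1-c\,\rho^{-d}e^{-\tfrac1c\rho^d n}\,.\]
Substituting $\rho=\epsilon/3$ yields a bound of the form $1-C\,\epsilon^{-d}e^{-A_0\epsilon^d n}$ with $C$ and $A_0$ depending only on $c$ and $d$, which is the content of Equation~\ref{eqn_lemma_hausdorff_probability}; the precise leading coefficient is a matter of relabeling the constants absorbed into $A_0$.

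I do not expect a genuine obstacle: this is a standard net-plus-union-bound argument, and every ingredient (the cover, the packing count, and the measure lower bound) is supplied by the Ball-counting Lemma and Ahlfors regularity. The only care required is the bookkeeping of constants — choosing $\rho$ so that the covering radius $2\rho$ plus the hitting radius $\rho$ stays below $\epsilon$, and organizing the resulting factor to match the stated form — and verifying that Lemma~\ref{lemma_union} applies uniformly across all balls, which follows from the uniform lower measure bound $\tfrac1c\rho^d$.
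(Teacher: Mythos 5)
Your proposal is correct and follows essentially the same route as the paper: a maximal packing at scale $\epsilon/3$ whose doubled balls cover $X$, the measure lower bound from Ahlfors regularity, the count from the Ball-counting Lemma, and the union bound of Lemma~\ref{lemma_union}, with $A_0=3^{-d}/c$. The only discrepancy is the harmless leading constant ($3^d c$ versus $c$ in front of $\epsilon^{-d}$), an issue present in the paper's own computation as well.
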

\begin{proof}
Let  $\textbf{y}=\set{y_1,\ldots,y_{M_{\epsilon/3}\paren{X}}}$ be the centers of a maximal set of disjoint balls of radius $\epsilon/3$ centered at points of $X.$ By the definition of Ahlfors regularity,
\[\mu\paren{B_{\epsilon/3}\paren{y}}\geq A_0 \epsilon^d\]
for all $y\in\textbf{y},$ where $A_0=3^{-d}/c.$ 

 The balls of radius $2\epsilon/3$ centered at the points of $\textbf{y}$ cover $X$ so
\[d_H\paren{\textbf{y},X}<2\epsilon/3\,.\]

Therefore, if $\set{\textbf{x}_n}\cap B_{\epsilon/3}\paren{y}\neq \varnothing$ for all $y\in\textbf{y}$ then
\[d_H\paren{\set{\textbf{x}_n},X}<\epsilon/3+2\epsilon/3=\epsilon\,.\]
It follows that
\begin{align*}
\mathbb{P}\paren{d_H\paren{\set{\textbf{x}_n},X}<\epsilon}\geq &\;\;\mathbb{P}\paren{\set{\textbf{x}_n}\cap B_{\epsilon/3}\paren{y}\neq \varnothing \quad \text{for all }y\in\textbf{y}}\\
\geq & \;\; 1-M_{\epsilon/3}\paren{X} e^{-  A_0 \epsilon^d n} &&\text{by Lemma~\ref{lemma_union}}\\
\geq & \;\; 1- c \epsilon^{-d}  e^{-A_0\epsilon^{d}n} &&\text{by Lemma~\ref{lemma_ballcount}}\,.\\
\end{align*}
\end{proof}

In the next lemma, we show that if the expected number of persistent homology intervals of $\textbf{x}_n$ is $O(n),$ then we can control the number of ``long'' persistent homology intervals of $X$ itself.

\begin{Lemma}
\label{lemma_bootstrap}
Let $X$ be a bounded metric space that admits a $d$-Ahlfors regular measure $\mu$ satisfying
\[\mathbb{E}\paren{\abs{\PH_i\paren{\textbf{x}_n}}}=O\paren{n}\,.\]
Then there are positive real numbers $A_1$ and $\epsilon_0$ so that
\[p_i\paren{X,\epsilon} \leq A_1 \epsilon^{-d} \log\paren{1/\epsilon}\]
for all $\epsilon<\epsilon_0.$ 
\end{Lemma}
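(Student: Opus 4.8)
The plan is to control the deterministic quantity $p_i\paren{X,\epsilon}$ by comparing it with the \emph{expected} interval count of a random sample, via a first--moment argument anchored by the stability property and the Hausdorff--distance estimate of Lemma~\ref{lemma_hausdorff_probability}. The point is that $p_i\paren{X,\epsilon}$ does not depend on the sample, so it can be pulled out of an expectation once we restrict to a high--probability event on which stability transfers the long intervals of $X$ to intervals of the sample.

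First I would fix a constant $K>0$ with $\mathbb{E}\paren{\abs{\PH_i\paren{\textbf{x}_n}}}\leq K n$ for all $n$, which exists by hypothesis. For a fixed $\epsilon>0$, let $G_n$ denote the event $\set{d_H\paren{\set{\textbf{x}_n},X}<\epsilon/2}$. Applying the stability property (property (1) of Section~\ref{sec_properties}) with Hausdorff radius $\epsilon/2$ and $\delta=0$, on $G_n$ we have $p_i\paren{X,\epsilon}\leq p_i\paren{\set{\textbf{x}_n},0}=\abs{\PH_i\paren{\textbf{x}_n}}$.

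The key step is the first--moment inequality. Since $p_i\paren{X,\epsilon}$ is a fixed number,
\[\mathbb{E}\paren{\abs{\PH_i\paren{\textbf{x}_n}}}\geq \mathbb{E}\paren{\abs{\PH_i\paren{\textbf{x}_n}}\mathbf{1}_{G_n}}\geq p_i\paren{X,\epsilon}\,\mathbb{P}\paren{G_n}\,,\]
so that $p_i\paren{X,\epsilon}\leq Kn/\mathbb{P}\paren{G_n}$ whenever $\mathbb{P}\paren{G_n}>0$. It then remains to choose $n=n\paren{\epsilon}$ so that $\mathbb{P}\paren{G_n}$ stays bounded below --- say by $1/2$ --- while keeping $n$ as small as possible. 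By Lemma~\ref{lemma_hausdorff_probability}, applied with $\epsilon/2$ in place of $\epsilon$, we have $\mathbb{P}\paren{G_n}\geq 1-c\paren{\epsilon/2}^{-d}e^{-A_0\paren{\epsilon/2}^d n}$. Solving $c\paren{\epsilon/2}^{-d}e^{-A_0\paren{\epsilon/2}^d n}\leq 1/2$ for $n$ shows it suffices to take $n$ of order $\epsilon^{-d}\log\paren{1/\epsilon}$: writing $u=\paren{\epsilon/2}^d$, the choice $n=\ceil{\paren{\log\paren{2c}+\log\paren{1/u}}/\paren{A_0 u}}$ gives $\mathbb{P}\paren{G_n}\geq 1/2$, whence $p_i\paren{X,\epsilon}\leq 2Kn=O\paren{\epsilon^{-d}\log\paren{1/\epsilon}}$ for all sufficiently small $\epsilon$. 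This is the desired bound for suitable $A_1$ and $\epsilon_0$.

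The hard part is conceptual rather than computational: recognizing that the whole--space count $p_i\paren{X,\epsilon}$ can be dominated by the expected sample count by conditioning on the stability event $G_n$, so that a hypothesis on the \emph{average} number of intervals yields a \emph{deterministic} bound on $p_i\paren{X,\epsilon}$. The only genuine calibration is the sample size $n\asymp\epsilon^{-d}\log\paren{1/\epsilon}$, and the logarithmic factor in the conclusion is forced precisely by the ball--count prefactor $c\paren{\epsilon/2}^{-d}$ in Lemma~\ref{lemma_hausdorff_probability}, which requires $A_0\paren{\epsilon/2}^d n\gtrsim\log\paren{1/\epsilon}$ to make the failure probability small.
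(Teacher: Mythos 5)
Your proof is correct and follows essentially the same route as the paper: both arguments calibrate a sample size $n\asymp\epsilon^{-d}\log\paren{1/\epsilon}$ via Lemma~\ref{lemma_hausdorff_probability}, transfer the long intervals of $X$ to the sample by stability, and invoke the $O\paren{n}$ expectation hypothesis. The only difference is cosmetic: the paper uses Markov's inequality to intersect two probability-$\tfrac{1}{2}$ events and exhibit a single good point set, whereas you restrict the expectation to the good event $G_n$ and divide by $\mathbb{P}\paren{G_n}$ --- a slightly cleaner execution of the same first-moment idea.
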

\begin{proof}
By hypothesis, there are positive real numbers $D_1$ and $N_1$ so that 
\[\mathbb{E}\paren{\abs{\PH_i\paren{\textbf{x}_n}}}\leq n D_1/2  \]
for all $n>N_1.$ By Markov's inequality,
\begin{equation}
\label{eqn_bootstrap1}
\mathbb{P}\paren{\abs{\PH_i\paren{\textbf{x}_n}}\leq  n D_1 } \geq 1/2\,.
\end{equation}

Manipulating the inequality in Equation~\ref{eqn_lemma_hausdorff_probability} to solve for the number of points samples required to approximate $X$ within a distance of $\epsilon/2$ with probability exceeding $1/2$ gives that
\begin{equation}
\label{eqn_bootstrap2} 
\mathbb{P}\paren{d_H\paren{\set{x_1,\ldots,x_{m\paren{\epsilon}}},X}<\epsilon/2}\geq 1/2
\end{equation}
where
\begin{equation}
\label{eqn_bootstrap25}
m\paren{\epsilon} = \ceil{ \frac{2^d}{A_0} \epsilon^{-d}\log\paren{2^{d+1}c \epsilon^{-d}}}\,.
\end{equation}
 Note that $m\paren{\epsilon}$ is chosen to give distances of less than $\epsilon/2,$ rather than less than $\epsilon.$ Let $\epsilon$ be sufficiently small so that $m\paren{\epsilon}>N_1.$ The events in Equations~\ref{eqn_bootstrap1} and~\ref{eqn_bootstrap2} both occur with probability greater than $\frac{1}{2}$ so there exists at least one point set in the intersection. That is, there exists a finite point set  $x_1,\ldots,x_{m\paren{\epsilon}}$ of $X$ so that
\begin{equation}
\label{eqn_bootstrap3}
\abs{\PH_i\paren{x_1,\ldots,x_{m\paren{\epsilon}}}} \leq D_1 m\paren{\epsilon}
\end{equation}
and
\begin{equation}
\label{eqn_bootstrap4}
 d_H\paren{\set{x_1,\ldots,x_{m\paren{\epsilon}}},X}<\epsilon\,.
\end{equation}
Therefore,
\begin{align*}
p_i\paren{X,\epsilon} \leq &\;\; p_i\paren{\set{x_1,\ldots,x_{m\paren{\epsilon}}},0} &&\text{by stability and Eqn.~\ref{eqn_bootstrap4}} \\
\leq & \;\; D_1 m\paren{\epsilon}&&\text{by Eqn.~\ref{eqn_bootstrap3}} \\
= & \;\;  D_1 \ceil{ \frac{2^d}{A_0} \epsilon^{-d}\log\paren{2^{d+1} c \epsilon^{-d}}} &&\text{by Eqn.~\ref{eqn_bootstrap25}}\\
=& \;\; O\paren{\epsilon^{-d} \log\paren{1/\epsilon}}
\end{align*}
as $\epsilon\rightarrow 0.$
\end{proof}

Next, we use the previous lemma to control $F_\alpha^i(X,\epsilon),$ the truncated $\alpha$-weighted sum defined in Section~\ref{sec_truncated}:
\[F_\alpha^i\paren{X,\epsilon}=\sum_{I\in\PH_i^\epsilon\paren{X}}\abs{I}^\alpha\,.\]
where $\PH_i^\epsilon\paren{X}$ is the set of $\PH_i$ intervals of $X$ of length greater than $\epsilon.$

\begin{Proposition}
\label{prop_selfHomology}
If $X$ satisfies the hypotheses of the previous lemma and $0<\alpha<d,$ then there exist positive real numbers $A_2$ and $\epsilon_1$ so that 
\[F_\alpha^i\paren{X,\epsilon} \leq A_2 \epsilon^{\alpha - d}\log\paren{1/\epsilon}\]
for all $\epsilon<\epsilon_1.$
\end{Proposition}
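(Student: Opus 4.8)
The plan is to follow the template of Proposition~\ref{prop_upper_extremal_MST}, replacing the edge-count bound $p\paren{\textbf{x},\delta}\leq n-1$ used there with the logarithmically-corrected polynomial control on the number of long intervals of $\PH_i\paren{X}$ supplied by Lemma~\ref{lemma_bootstrap}, and then converting this interval count into a bound on the weighted sum $F_\alpha^i\paren{X,\epsilon}$ by integration via Lemma~\ref{integral_lemma}.

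First I would observe that since $X$ is bounded the multiset $J$ of lengths of intervals in $\PH_i\paren{X}$ has a finite supremum $\Delta=\max J$, and that $p_i\paren{X,\delta}=\abs{J_\delta}$ is non-increasing in $\delta$. By Lemma~\ref{lemma_bootstrap} there are $A_1,\epsilon_0>0$ with $p_i\paren{X,\delta}\leq A_1\delta^{-d}\log\paren{1/\delta}$ for all $\delta<\epsilon_0$. I would then package this into a single function $f$ that bounds $\abs{J_\delta}$ for every $\delta>0$: set $f\paren{\delta}=A_1\delta^{-d}\log\paren{1/\delta}$ on $\paren{0,\epsilon_0}$ and let $f$ equal a fixed constant on $[\epsilon_0,\infty)$, which is a valid bound for $p_i\paren{X,\cdot}$ there by monotonicity.

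Next I would apply the first inequality of Lemma~\ref{integral_lemma} to the set $J$, obtaining
\[F_\alpha^i\paren{X,\epsilon}=\sum_{j\in J_\epsilon}j^\alpha\leq \epsilon^\alpha f\paren{\epsilon}+\alpha\int_{\delta=\epsilon}^{\Delta}f\paren{\delta}\delta^{\alpha-1}\,d\delta\,.\]
The boundary term equals $A_1\epsilon^{\alpha-d}\log\paren{1/\epsilon}$. Splitting the integral at $\epsilon_0$, the tail $\int_{\epsilon_0}^{\Delta}$ is a constant independent of $\epsilon$ (finite because $\alpha>0$ and the range is bounded), while the main contribution is $\alpha A_1\int_{\epsilon}^{\epsilon_0}\delta^{\alpha-d-1}\log\paren{1/\delta}\,d\delta$.

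The crux is this last integral. Writing $\beta=d-\alpha>0$ and integrating $\delta^{-\beta-1}\log\paren{1/\delta}$ by parts shows that, as $\epsilon\to 0$, it is asymptotic to $\frac{1}{d-\alpha}\epsilon^{\alpha-d}\log\paren{1/\epsilon}$, with the remaining terms of strictly smaller order. Hence both the boundary term and the dominant integral term are $O\paren{\epsilon^{\alpha-d}\log\paren{1/\epsilon}}$, and collecting constants yields $F_\alpha^i\paren{X,\epsilon}\leq A_2\epsilon^{\alpha-d}\log\paren{1/\epsilon}$ for all $\epsilon$ below some $\epsilon_1$. The one point requiring genuine care is precisely this integration by parts: because $\alpha<d$ the exponent $\alpha-d-1<-1$ forces the integral to diverge as $\epsilon\to 0$, and one must confirm that the logarithmic factor is preserved in the leading term (rather than being upgraded to a power of $\log$). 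Everything else is bookkeeping entirely analogous to Proposition~\ref{prop_upper_extremal_MST}.
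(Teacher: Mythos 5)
Your proposal is correct and follows essentially the same route as the paper: both bound $p_i\paren{X,\cdot}$ via Lemma~\ref{lemma_bootstrap}, feed that bound into Lemma~\ref{integral_lemma}, and evaluate $\alpha\int_\epsilon^{\epsilon_0}\delta^{\alpha-d-1}\log\paren{1/\delta}\,d\delta$ by the same antiderivative computation to extract the leading term $\tfrac{1}{d-\alpha}\epsilon^{\alpha-d}\log\paren{1/\epsilon}$. The only cosmetic difference is that you absorb the contribution of intervals longer than $\epsilon_0$ into the integral by extending $f$ as a constant, whereas the paper keeps it as the separate additive term $F_\alpha^i\paren{X,\epsilon_0}$; both yield an $\epsilon$-independent constant.
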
.
\begin{proof}
Without loss of generality, we may rescale $X$ so its diameter is less than one. By the previous lemma
\begin{equation}
\label{eqn_selfhomology_1}
p_i\paren{X,\epsilon}\leq f\paren{\epsilon}\coloneqq A_1 \paren{\epsilon}^{-d} \log\paren{\frac{1}{\epsilon}}
\end{equation}
for all $\epsilon<\epsilon_0.$
Applying Lemma~\ref{integral_lemma} yields
\begin{equation}
\label{eqn_selfhomology_2}
F_\alpha^i\paren{Y,\epsilon}\leq \epsilon^\alpha f\paren{\epsilon}+\alpha \int_{t=\epsilon}^{\epsilon_0} f\paren{t} t^{\alpha-1}\;d t + F_\alpha^i\paren{Y,\epsilon_0}\,.
\end{equation}

By Equation~\ref{eqn_selfhomology_1}, the first term of Equation~\ref{eqn_selfhomology_2} equals
\[A_1 \epsilon^{\alpha-d}\paren{\log\paren{1/\epsilon}}\,,\]
which has the desired asymptotics as $\epsilon\rightarrow 0.$ The second term equals
\begin{align*}
\alpha\int_{t=\epsilon}^{\epsilon_0} A_1 t^{\alpha-d-1}\log\paren{1/t}\;d t=&\\
&\;\; A_1 \brac{-\frac{1}{d-\alpha} t^{\alpha-d} \log\paren{1/t}-\frac{1}{\paren{d-\alpha}^2} t^{\alpha-d}}_{\epsilon}^{\epsilon_0}\\
&=\;\; A_1 \paren{\frac{1}{d-\alpha} \epsilon^{\alpha-d} \log\paren{1/\epsilon}+\frac{1}{\paren{d-\alpha}^2} \epsilon^{\alpha-d}}\\
& - \;\; A_1 \paren{\frac{1}{d-\alpha} \epsilon_0^{\alpha-d} \log\paren{1/\epsilon_0}+\frac{1}{\paren{d-\alpha}^2} \epsilon_0^{\alpha-d}}\\
&=\;\;O\paren{\epsilon^{\alpha - d}\log\paren{1/\epsilon}}\,.
\end{align*}
Therefore, $p_i(X,\epsilon)=O\paren{\epsilon^{\alpha - d}\log\paren{1/\epsilon}},$ as desired. 

\end{proof}

Finally, we can bootstrap the previous result to control $E_\alpha^i\paren{\textbf{x}_n}$ and prove the upper bound in Theorem~\ref{thm_probabalistic}. For clarity, we restate that upper bound as a proposition.

\begin{Proposition}
\label{prop_upper_prob}
Let $\mu$ be a $d$-Ahlfors regular measure on a bounded metric space. If
\[\mathbb{E}\paren{\abs{\PH_i\paren{\textbf{x}_n}}}=O\paren{n}\]
and
\[\textit{Var}\paren{\abs{\PH_i\paren{\textbf{x}_n}}}/n^2\rightarrow 0\,,\]

then there is a $\Lambda >0$ so that
\[E_\alpha^i\paren{\textbf{x}_n}\leq \Lambda  n^{\frac{d-\alpha}{d}}\log\paren{n}^{\frac{\alpha}{d}}\]
with high probability as $n\rightarrow\infty.$
\end{Proposition}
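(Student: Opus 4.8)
The plan is to split the weighted sum $E_\alpha^i\paren{\textbf{x}_n}$ at a threshold $\epsilon_n$ into a contribution from long intervals and one from short intervals, controlling each by a different mechanism. Writing
\[E_\alpha^i\paren{\textbf{x}_n} = F_\alpha^i\paren{\textbf{x}_n,\epsilon_n} + \sum_{I\in\PH_i\paren{\textbf{x}_n},\,\abs{I}\leq \epsilon_n}\abs{I}^\alpha\,,\]
the first term captures the ``genuine'' topology inherited from the support $X$, while the second is the ``$d$-dimensional noise'' alluded to in the section introduction. The whole argument comes down to choosing $\epsilon_n$ to balance the two terms, and I expect the correct scale to be $\epsilon_n \approx \paren{\log\paren{n}/n}^{1/d}$, which is precisely what produces the $\log\paren{n}^{\alpha/d}$ factor in the conclusion.

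For the long intervals I would approximate $\textbf{x}_n$ by the support $X$ in Hausdorff distance. On the event $d_H\paren{\set{\textbf{x}_n},X}<\epsilon_n/4$, Lemma~\ref{lemma_HB1} gives $F_\alpha^i\paren{\textbf{x}_n,\epsilon_n} < 2^\alpha F_\alpha^i\paren{X,\epsilon_n/2}$, and Proposition~\ref{prop_selfHomology} bounds the right-hand side by $O\paren{\epsilon_n^{\alpha-d}\log\paren{1/\epsilon_n}}$. Lemma~\ref{lemma_hausdorff_probability} shows this Hausdorff event holds with high probability provided $\epsilon_n^d n\gtrsim \log\paren{n}$; taking $\epsilon_n = \paren{C\log\paren{n}/n}^{1/d}$ with $C$ large enough that $A_0 C/4^d>1$ makes the failure probability $c\paren{\epsilon_n/4}^{-d}e^{-A_0\paren{\epsilon_n/4}^d n}$ decay like a negative power of $n$. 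Since $1-\paren{d-\alpha}/d=\alpha/d$, with this $\epsilon_n$ the long-interval contribution is $O\paren{n^{\frac{d-\alpha}{d}}\log\paren{n}^{\frac{\alpha}{d}}}$.

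For the short intervals I would use the crude bound $\abs{I}^\alpha\leq \epsilon_n^\alpha$, so that $\sum_{\abs{I}\leq \epsilon_n}\abs{I}^\alpha \leq \epsilon_n^\alpha\,\abs{\PH_i\paren{\textbf{x}_n}}$. Here the probabilistic hypotheses enter: since $\mathbb{E}\paren{\abs{\PH_i\paren{\textbf{x}_n}}}=O\paren{n}$ and $\text{Var}\paren{\abs{\PH_i\paren{\textbf{x}_n}}}/n^2\to 0$, Chebyshev's inequality gives $\abs{\PH_i\paren{\textbf{x}_n}}\leq C'n$ with high probability for a suitable constant $C'$. With $\epsilon_n^\alpha = \paren{C\log\paren{n}/n}^{\alpha/d}$, this term is also $O\paren{n^{\frac{d-\alpha}{d}}\log\paren{n}^{\frac{\alpha}{d}}}$. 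Intersecting the two high-probability events---Hausdorff approximation and interval-count concentration---and adding the two bounds yields the proposition.

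The main obstacle is conceptual rather than computational: we cannot bound the number of short intervals by an extremal estimate, since that count could grow faster than linearly, so the variance hypothesis is essential, and it only yields control with high probability rather than almost surely---which is why the conclusion is stated with high probability and not with probability one. The delicate point is that the scale $\epsilon_n$ is forced from below by the Hausdorff covering requirement $\epsilon_n^d n\gtrsim \log\paren{n}$ (roughly $\log\paren{n}$ samples per covering ball are needed for the union bound in Lemma~\ref{lemma_hausdorff_probability} to succeed), and it is exactly this lower bound on $\epsilon_n$, rather than a free optimization, that introduces the extra $\log\paren{n}^{\alpha/d}$ factor distinguishing this estimate from the clean $n^{\frac{d-\alpha}{d}}$ scaling of the extremal and minimum-spanning-tree cases.
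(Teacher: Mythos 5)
Your proposal is correct and follows essentially the same route as the paper's proof: the same decomposition into $F_\alpha^i$ plus a short-interval remainder, the same threshold scale $\epsilon_n\approx\paren{\log\paren{n}/n}^{1/d}$ forced by Lemma~\ref{lemma_hausdorff_probability}, Chebyshev for the interval count, and Lemma~\ref{lemma_HB1} with Proposition~\ref{prop_selfHomology} for the long intervals. The only cosmetic difference is that the paper solves for $\epsilon\paren{n}$ exactly via the Lambert $W$ function rather than fixing a large constant $C$ as you do.
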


\begin{proof}
Let 
\begin{equation}
\label{eqn_pup0}
G_\alpha^i\paren{\textbf{x},\epsilon}=\sum_{I \in\PH_i\paren{\textbf{x}} \setminus \PH_i^\epsilon\paren{\textbf{x}}}\abs{I}^\alpha\,.
\end{equation}
Our  strategy is to write
\[E_\alpha^i\paren{\textbf{x}_n}=G_\alpha^i\paren{\textbf{x}_n,\epsilon}+F_\alpha^i\paren{\textbf{x}_n,\epsilon}\]
for a well-chosen $\epsilon.$ The former term can be interpreted as ``noise,'' and the latter approximates the persistent homology of the support of $\mu.$ 

Let $0<p<1,$ and let $D$ be a positive real number so that 
\[\mathbb{E}\paren{\abs{\PH_i\paren{\textbf{x}_n}}}\leq \paren{D/2} n\]
for all sufficiently large $n.$ By Chebyshev's inequality,
\begin{align*}
\mathbb{P}\paren{\abs{\PH_i\paren{\textbf{x}_n}}>D n} \leq & \\
& \;\; \mathbb{P}\paren{\Big | \abs{\PH_i\paren{\textbf{x}_n}}-\mathbb{E}\paren{\abs{\PH_i\paren{\textbf{x}_n}}}\Big |> Dn/2}\\
\leq & \;\; \text{Var}\paren{\abs{\PH_i\paren{\textbf{x}_n}}} \frac{4}{D^2 n^2}
\end{align*}
which converges to $0$ as $n\rightarrow \infty,$ by hypothesis. Therefore, there is a $M$ so that 

\begin{equation}
\label{eqn_pup1}
\mathbb{P}\paren{\abs{\PH_i\paren{\textbf{x}_n}}> D n}< p/2
\end{equation}
for all $n>M.$

Solving for $\epsilon$ in Equation~\ref{eqn_lemma_hausdorff_probability} gives that
\begin{equation}
\label{eqn_pup2}
\mathbb{P}\paren{d_H\paren{\set{\textbf{x}_n},X}>\epsilon\paren{n}/4}< p/2
\end{equation}
if 
\[\epsilon\paren{n}=4 A_0^{-1/d} n^{-1/d} W\paren{\frac{2 c A_0 n}{p}}^{1/d}\,,\]
where $W$ is the Lambert W function. $W\paren{m}\sim \log\paren{m}$ as $m\rightarrow \infty,$ and $W\paren{m}\leq \log\paren{m}$ for $m\geq e$~\cite{2008hoorfar}. Therefore, there are positive real numbers $A_3$ and $N_1\paren{p},$ where the former does not depend on $p$ but the latter does, so that
\begin{equation}
\label{eq:epn}
 \frac{A_3}{2} n^{-1/d} \log\paren{n}^{1/d} \leq \epsilon\paren{n} \leq A_3 n^{-1/d} \log\paren{n}^{1/d}
\end{equation}
for all $n>N_1\paren{p}.$ The right hand side goes to zero as $n\rightarrow\infty$ so we can choose $N_2\paren{p}>N_1\paren{p}$ to be sufficiently large so that $\epsilon\paren{n}<\epsilon_1$ for all $n>N_2\paren{p},$ where $\epsilon_1$ is given in Proposition~\ref{prop_selfHomology}. Let $n>N_2\paren{p}$

 and suppose that  $\textbf{x}_n$ satisfies 
\begin{equation}
\label{eqn_pup3}
\abs{\PH_i\paren{\textbf{x}_n}}< D n \qquad \text{and} \qquad d_H\paren{\textbf{x}_n,X}<\epsilon\paren{n}/4\,,
\end{equation}
an event which occurs with probability greater than $1-p$ by Equations~\ref{eqn_pup1} and~\ref{eqn_pup2}.

 Write
\begin{equation}
\label{eqn_pup4}
E_\alpha^{i}\paren{\textbf{x}_n}=F_\alpha^i\paren{\textbf{x}_n,\epsilon\paren{n}}+G_\alpha^i\paren{\textbf{x}_n,\epsilon\paren{n}}\,.
\end{equation}

We consider the two terms separately.

\begin{align*}
G_\alpha^i\paren{\textbf{x}_n,\epsilon\paren{n}}\leq & \;\; D \abs{\textbf{x}_n} \epsilon\paren{n}^\alpha &&\text{by Eqns.~\ref{eqn_pup0} and~\ref{eqn_pup3}}\\
\leq & \;\; 2^{\alpha} D A_3^\alpha  n^{\frac{d-\alpha}{d}} \log\paren{n}^{\alpha/d}\\
= &\;\; A_4 n^{\frac{d-\alpha}{d}} \log\paren{n}^{\frac{\alpha}{d}}\,,
\end{align*}
where $A_4=2^{\alpha} D A_3^\alpha$ is a positive constant that does not depend on $n$ or $p.$

To bound the second term in Equation~\ref{eqn_pup4}, we apply Lemma~\ref{lemma_HB1} to find
\begin{align*}
F_\alpha^i\paren{\textbf{x},\epsilon\paren{n}}\leq & \;\; 2^\alpha F_\alpha^i\paren{X,\frac{\epsilon\paren{n}}{2}}\\
\leq & \;\; A_2 \paren{\epsilon\paren{n}}^{\alpha - d}\log\paren{\frac{1}{\epsilon\paren{n}}} &&\text{by Prop.~\ref{prop_selfHomology}}\\
\leq & \;\; A_2 A_3 ^{\alpha-d} n^{\frac{d-\alpha}{d}} \log\paren{n}^{-\frac{d-\alpha}{d}}\log\paren{\frac{1}{2A_3} n^{1/d} \log\paren{n}^{-1/d}} &&\text{by Eqn.~\ref{eq:epn}}\\
=&\;\;  A_2 A_3^{\alpha-d} n^{\frac{d-\alpha}{d}}\log\paren{n}^{-\frac{d-\alpha}{d}}\paren{\frac{1}{d}\log\paren{n}-\log\paren{2 A_3 \log\paren{n}^{1/d}}}\\
\leq & \;\; \frac{1}{d} A_2 A_3^{\alpha-d} n^{\frac{d-\alpha}{d}}\log\paren{n}^{\frac{\alpha}{d}}\\
=&\;\; A_5 n^{\frac{d-\alpha}{d}}\log\paren{n}^{\frac{\alpha}{d}}\,,
\end{align*}
where $A_5=\frac{1}{d} A_2 A_3^{\alpha-d}$ is a positive constant that does not depend on $n$ or $p.$

In summary, if $\Lambda=A_4+A_5$ and $0<p<1,$ then there exists an $N_2(p)>0$ so that 

\[\mathbb{P}\paren{E_\alpha^i\paren{\textbf{x}_n}\leq \Lambda  n^{\frac{d-\alpha}{d}}\log\paren{n}^{\frac{\alpha}{d}}}>1-p\]
for all $n>N_2(p).$
\end{proof}

\section{The Lower Bound}
In this section, we prove the lower bound in Theorems~\ref{thm_extremal} and~\ref{thm_probabalistic}. While our proofs of the upper bounds work for Ahlfors regular measures on arbitrary bounded metric spaces, here we restrict our attention to Ahlfors regular measures on Euclidean space. This will allow us to use the structure of the cubical grid on $\R^m.$ 

We remind the reader of the occupancy indicators defined in Section~\ref{sec:occupancy}. If $\textbf{x}$ is point set, $A$ is a set, and $\mathcal{B}$ of a collection of sets then 
\[\Xi\paren{\textbf{x},A,\mathcal{B}}=\begin{cases}
 1 & \delta\paren{A,\textbf{x}}=0 \quad \quad \text{and}\quad \quad \delta\paren{B,\textbf{x}}=1\quad \forall\, B\in\mathcal{B}\\
 0 & \text{otherwise}
\end{cases}\,,\]
where for any set $C$
\[\delta\paren{C,\textbf{x}}=\begin{cases}
0 & C\cap \textbf{x}=\varnothing\\ 
1 & C\cap \textbf{x} \neq \varnothing\\
\end{cases}
\,.\]

To prove the lower bound, we modify the approach in our paper on extremal $\PH$-dimension~\cite{2018schweinhart} to work in a probabilistic context. The outline of the argument is similar to that in Section~\ref{sec_MST_lower}, but more care is required to construct occupancy indicators implying the existence of persistent homology intervals. We work on two different length-scales: we divide the ambient Euclidean space into cubes of width $n^{-1/d}$, and divide each of these cubes into $k^m$ sub-cubes of width $n^{-1/d}/k.$ Using the non-triviality constants defined in~\cite{2018schweinhart} (Definition~\ref{defn_nontrivial}), we show that if a cube contains sufficiently many sub-cubes that overlap with the support of the measure, then we can define an occupancy indicator guaranteeing the existence $\PH_i$ interval of a certain length. We count the number of cubes with sufficiently many occupied sub-cubes in Lemma~\ref{lemma_lower_1}, and apply Lemma~\ref{lemma_occupancy_2} to the corresponding occupancy indicators to give
\[p_i\paren{\textbf{x}_n, \epsilon_0 \, n^{-1/d}} \geq \Omega_1 n \]
with high probability as $n\rightarrow\infty$ for some $\Omega_1>0$ (Lemma~\ref{lemma_lower_2}). Summing over intervals of length greater than $\epsilon_0 \, n^{-1/d}$ proves the desired lower bound (Proposition~\ref{prop_lower}).

The proof is complicated, so we warm up with the special case of an $m$-Ahlfors regular measure on $\mathbb{R}^m.$ The approach is more straightforward, but contains some of the same elements. These arguments also appear in our unpublished manuscript~\cite{2018schweinhart_b}, which has largely been subsumed into the current work. First, we find an occupancy indicator defined in terms of sub-cubes of a larger cube that guarantees the existence of a persistent homology interval of a given length, regardless of what happens outside of the cube in Lemma~\ref{lemma_euc_1}. Then, we assemble a collection of these occupancy indicators to show the lower bound in Propostition~\ref{prop_AC}.

\subsection{The Absolutely Continuous Case}
Note that if $\mu$ is an $m$-Ahlfors regular measure on $\mathbb{R}^m,$ then $\mu$ is comparable to the Lebesgue measure on its support and is thus absolutely continuous with respect to it. 

\begin{Proposition}
\label{prop_AC}
Let $\mu$ be an $m$-Ahlfors regular measure on $\mathbb{R}^m.$ There exist a constant $\Psi>0$ so that
\[\lim_{n\rightarrow\infty} n^{-\frac{d-\alpha}{d}} E_\alpha^i\paren{x_1,\ldots,x_n}\geq  \Psi \]
with high probability.
\end{Proposition}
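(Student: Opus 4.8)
The plan is to manufacture, with high probability, on the order of $n$ disjoint $\PH_i$ intervals, each of length at least a constant multiple of $n^{-1/m}$, and then bound $E_\alpha^i$ from below by the sum of their $\alpha$-weights. Since $\mu$ is $m$-Ahlfors regular on $\R^m$ we have $d=m$, so the target exponent is $\frac{m-\alpha}{m}$, and $\mu$ is comparable to Lebesgue measure on its support; in particular there is a cube $R$ on which $\mu\gtrsim \vol$. First I would subdivide $R$ into disjoint cubes $Q_1,\ldots,Q_{s_n}$ of width $c_1 n^{-1/m}$, so that a volume count gives $s_n\geq a n$ for some constant $a>0$. In each $Q_j$ I will plant an occupancy indicator whose success forces a long, \emph{localized} $\PH_i$ interval, and then invoke Lemma~\ref{lemma_occupancy_2}.

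The heart of the matter is the localized construction, Lemma~\ref{lemma_euc_1}. Fix a finite ``seed'' $Z\subset\R^m$ whose $\PH_i$ contains an interval of positive length $\ell_0$; such a $Z$ exists precisely because $m>\gamma_i^m$ (the non-triviality hypothesis). Using the scaling property I rescale $Z$ so that it sits inside the central ball of radius $1/4$ of the unit cube and its interval dies before scale $1/8$, leaving a wide empty margin. Placing a copy of $Z$ scaled by $w=c_1 n^{-1/m}$ at the center of $Q_j$, I let $\mathcal{B}_j$ be the collection of the $N=\abs{Z}$ sub-cubes of a fine $k$-subdivision of $Q_j$ that contain the seed points, and let $A_j=Q_j\setminus\bigcup\mathcal{B}_j$ be the surrounding moat. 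The indicator is $X_j^n=\Xi\paren{\textbf{x}_n,A_j,\mathcal{B}_j}$.

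I then claim that $X_j^n=1$ forces a $\PH_i\paren{\textbf{x}_n}$ interval of length at least $\epsilon_0 w$ with $\epsilon_0=\ell_0/2$, regardless of the rest of $\textbf{x}_n$. Occupancy of every $B\in\mathcal{B}_j$ places a point within Hausdorff distance $\sqrt{m}\,w/k$ of the scaled seed $wZ$, so by stability and scaling the resulting cluster has a $\PH_i$ interval of length $\geq w\ell_0-2\sqrt{m}\,w/k\geq \epsilon_0 w$ once $k\geq 4\sqrt{m}/\ell_0$. Emptiness of the moat $A_j$ then separates this cluster from all other points by more than twice the death scale of its interval; hence for all filtration values below the moat scale the \v{C}ech/Rips complex (equivalently the neighborhood filtration) splits as a disjoint union, $H_i$ decomposes as a direct sum over the isolated clusters, and the cluster's interval survives intact in $\PH_i\paren{\textbf{x}_n}$. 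This scale-local decomposition is exactly the main obstacle: stability, scaling, and the locality afforded by the empty moat must be combined to guarantee the interval both has definite length and is unaffected by the exterior.

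Finally I would verify the probabilistic bookkeeping. Because $\mu\gtrsim\vol$ on $R$, each sub-cube $B\in\mathcal{B}_j$ satisfies $\mu(B)\geq p/n$ and each moat satisfies $\mu(A_j)\leq\mu(Q_j)\leq q/n$ for constants $p,q$ that can be forced into $(0,1)$ by taking $c_1$ small. Thus the $X_j^n$ are disjoint $n,p,q,N$-bounded occupancy indicators with $s_n\geq a n$, and Lemma~\ref{lemma_occupancy_2} yields a $\gamma>0$ with $\sum_j X_j^n\geq\gamma n$ with high probability. Since distinct successful cubes contribute distinct direct summands in the scale-local decomposition, this gives $p_i\paren{\textbf{x}_n,\epsilon_0 c_1 n^{-1/m}}\geq\gamma n$ with high probability, whence
\[
E_\alpha^i\paren{\textbf{x}_n}\geq\paren{\epsilon_0 c_1 n^{-1/m}}^{\alpha}\gamma n=\paren{\epsilon_0 c_1}^{\alpha}\gamma\,n^{\frac{m-\alpha}{m}},
\]
so $n^{-\frac{m-\alpha}{m}}E_\alpha^i\paren{\textbf{x}_n}\geq\Psi:=\paren{\epsilon_0 c_1}^{\alpha}\gamma>0$ for all large $n$ with high probability, which is the claim since $d=m$.
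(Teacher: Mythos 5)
Your proposal is correct and follows essentially the same route as the paper: subdivide a cube in the support into $\approx n$ cells of width $\approx n^{-1/m}$, plant in each an occupancy indicator whose success forces (via stability, scaling, and the empty moat yielding a scale-local direct-sum decomposition) a localized $\PH_i$ interval of length $\gtrsim n^{-1/m}$, and then apply Lemma~\ref{lemma_occupancy_2} and sum the $\alpha$-weights. The only substantive difference is cosmetic: the paper's template in Lemma~\ref{lemma_euc_1} is a round $i$-sphere $S^i$, whose $\PH_i$ is nontrivial for every $i<m$, so it never needs the non-triviality constant $\gamma_i^m$ that you invoke --- which is just as well, since $m>\gamma_i^m$ is neither a hypothesis of the proposition nor established in the paper for $i\geq 2$, though the finite seed $Z$ you need exists anyway (sample an $i$-sphere finely enough).
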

Before proving the proposition, we state and prove a preliminary lemma to find an occupancy indicator guaranteeing the existence of a persistent homology interval of a given length, regardless of what happens outside of the cube (in a sense made precise in Equation~\ref{eq_euc_1} below). The idea is to take $\mathcal{B}$ to be the set of sub-cubes that intersect an $i$-dimensional sphere sphere that is separated from the boundary of a cube, as shown in Figure~\ref{fig:circleFig}. Let 
\[J_{\epsilon,i}\paren{X}=\set{\paren{b,d}\in\PH_i\paren{X}:d\leq \epsilon}\,.\]

\begin{Lemma}
\label{lemma_euc_1}
Let $0\leq i < m,$  and $0<b<d<1/8.$  There exists a $\lambda_0>0$ so that if $C\subset\R^m$ is an $m$-dimensional cube of width $R$ and $\lambda>\lambda_0,$  there exists a collection $\mathcal{B}$ of disjoint, congruent cubes of width $R\lambda^{-\frac{1}{m}}$ so that if $A=C\setminus \cup_{B\in \mathcal{B}}B$ and $\Xi\paren{\textbf{x},A,\mathcal{B}} =1$ then $\PH_i\paren{\textbf{x}\cap C}$ contains an interval $\paren{\hat{b},\hat{d}}$ with
\[0<\hat{b}<R b<R d<\hat{d}\,.\]
Furthermore,
\begin{equation}
\label{eq_euc_1}
J_{\hat{d},i}\paren{\textbf{x}}=J_{\hat{d},i}\paren{\textbf{x}\cap C}\cup J_{\hat{d},i}\paren{\textbf{x}\setminus\paren{\textbf{x}\cap C}}\,.
\end{equation}

\end{Lemma}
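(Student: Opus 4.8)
The plan is to realize the occupancy pattern geometrically as a thin tube of sub-cubes around a round $i$-sphere placed at the center of $C$, so that the hypothesis $\Xi\paren{\textbf{x},A,\mathcal{B}}=1$ forces $\textbf{x}\cap C$ to be a dense sample of that sphere, whose persistent homology I then control by stability (property (1) of Section~\ref{sec_properties}).

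First I would fix a radius $r=\rho R$ with $\rho\in\paren{d,1/6}$, which is possible because $d<1/8<1/6$, and let $S$ be a round $i$-sphere of radius $r$ lying in an $(i+1)$-dimensional coordinate subspace through the center of $C$. The continuous sphere has a single long interval in $\PH_i$, namely $\paren{0,r}$: the $i$-cycle is present immediately and dies when the $\epsilon$-neighborhood fills in at $\epsilon=r$ (for the Vietoris--Rips complex the death is a fixed dimensional multiple of $r$, and one shrinks the admissible window for $\rho$ accordingly). I would then overlay a cubical grid of width $w=R\lambda^{-1/m}$ and take $\mathcal{B}$ to be the grid cubes meeting $S$, with $A=C\setminus\bigcup_{B\in\mathcal{B}}B$; for $\lambda>\lambda_0$ the tube $\bigcup\mathcal{B}$ together with its width-$w$ collar lies well inside $C$. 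Now suppose $\Xi\paren{\textbf{x},A,\mathcal{B}}=1$. Then $\textbf{x}\cap C\subseteq\bigcup\mathcal{B}$ and $\textbf{x}$ meets every $B\in\mathcal{B}$. Since each cube of $\mathcal{B}$ meets $S$ and has diameter $\sqrt{m}\,w$, every point of $\textbf{x}\cap C$ lies within $\sqrt{m}\,w$ of $S$; and since the grid cube containing any $s\in S$ belongs to $\mathcal{B}$ and contains a sample point, every point of $S$ lies within $\sqrt{m}\,w$ of $\textbf{x}\cap C$. Hence $d_H\paren{\textbf{x}\cap C,S}<\theta\coloneqq\sqrt{m}\,R\lambda^{-1/m}$. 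Applying stability with this Hausdorff bound, for $\lambda$ large the sphere interval $\paren{0,r}$ has length $r>2\theta$, so it is matched to an interval $\paren{\hat b,\hat d}\in\PH_i\paren{\textbf{x}\cap C}$ with $\hat b<\theta$ and $\hat d>r-\theta$. Enlarging $\lambda_0$ so that $\sqrt{m}\,\lambda^{-1/m}<\min\paren{b,\rho-d}$ then gives $\hat b<Rb$ and $\hat d>\rho R-\theta>Rd$, the desired interval (for $i\geq 1$ the birth of a finite sample is automatically positive, so $0<\hat b$).

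For the locality identity in Equation~\ref{eq_euc_1} I would argue by separation of scales. Since $S$ has radius $\rho R$ and is centered in $C$, it sits at distance at least $R\paren{1/2-\rho}$ from $\R^m\setminus C$, so every point of $\textbf{x}\cap C$ is at distance at least $R\paren{1/2-\rho}-\theta$ from every point of $\textbf{x}\setminus\paren{\textbf{x}\cap C}$. Because $\hat d<r+\theta=\rho R+\theta$ and $\rho<1/6$, for $\lambda$ large this separation exceeds $2\hat d$. Consequently, for every scale $\epsilon\leq\hat d$ the $\epsilon$-thickenings (equivalently the \v{C}ech or Rips complexes at scale $\epsilon$) of $\textbf{x}\cap C$ and of $\textbf{x}\setminus\paren{\textbf{x}\cap C}$ are disjoint, and the complex of $\textbf{x}$ splits as their disjoint union throughout $\brac{0,\hat d}$. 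This direct-sum splitting of the persistence module over the parameter range $\brac{0,\hat d}$, together with uniqueness of interval decompositions, yields exactly $J_{\hat d,i}\paren{\textbf{x}}=J_{\hat d,i}\paren{\textbf{x}\cap C}\cup J_{\hat d,i}\paren{\textbf{x}\setminus\paren{\textbf{x}\cap C}}$.

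The main obstacle is keeping the two length-scales compatible. The grid width $w$ must be small relative to $r$ so that the forced sample is Hausdorff-close to $S$ and the matched interval is long, while simultaneously the sphere radius $\rho R$ must be small relative to the cube width $R$ so that the inside sample is separated from everything outside $C$ by more than twice the death scale $\hat d$, forcing the short intervals to localize. The hypothesis $d<1/8$ is precisely what leaves room to pick $\rho\in\paren{d,1/6}$ making both conditions hold once $\lambda_0$ is large; a secondary care point is that the exact death scale of the sphere's $\PH_i$ interval differs between the \v{C}ech and Vietoris--Rips complexes, which only shifts the admissible range of $\rho$ (and the dimensional constant) rather than the structure of the argument.
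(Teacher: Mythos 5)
Your proposal is correct and takes essentially the same route as the paper: the paper likewise places a round $i$-sphere at the center of $C$ (of diameter $1/4$, so the single \v{C}ech interval is $(0,1/8)$), takes $\mathcal{B}$ to be the grid cubes of width $R\lambda^{-1/m}$ meeting the sphere, deduces a Hausdorff bound from $\Xi=1$, applies stability to produce $(\hat b,\hat d)$, and uses the separation of $\textbf{x}\cap C$ from $C^c$ at scales up to $\hat d$ to get the splitting in Equation~\eqref{eq_euc_1}. The only difference is cosmetic: you let the sphere radius $\rho R$ vary in $(dR,R/6)$, while the paper fixes it at $R/8$ and absorbs the slack into $\kappa=\min\paren{b,\tfrac18-d,\tfrac1{24}}$.
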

\begin{proof}

\begin{figure}
\centering
\includegraphics[width=.45\textwidth]{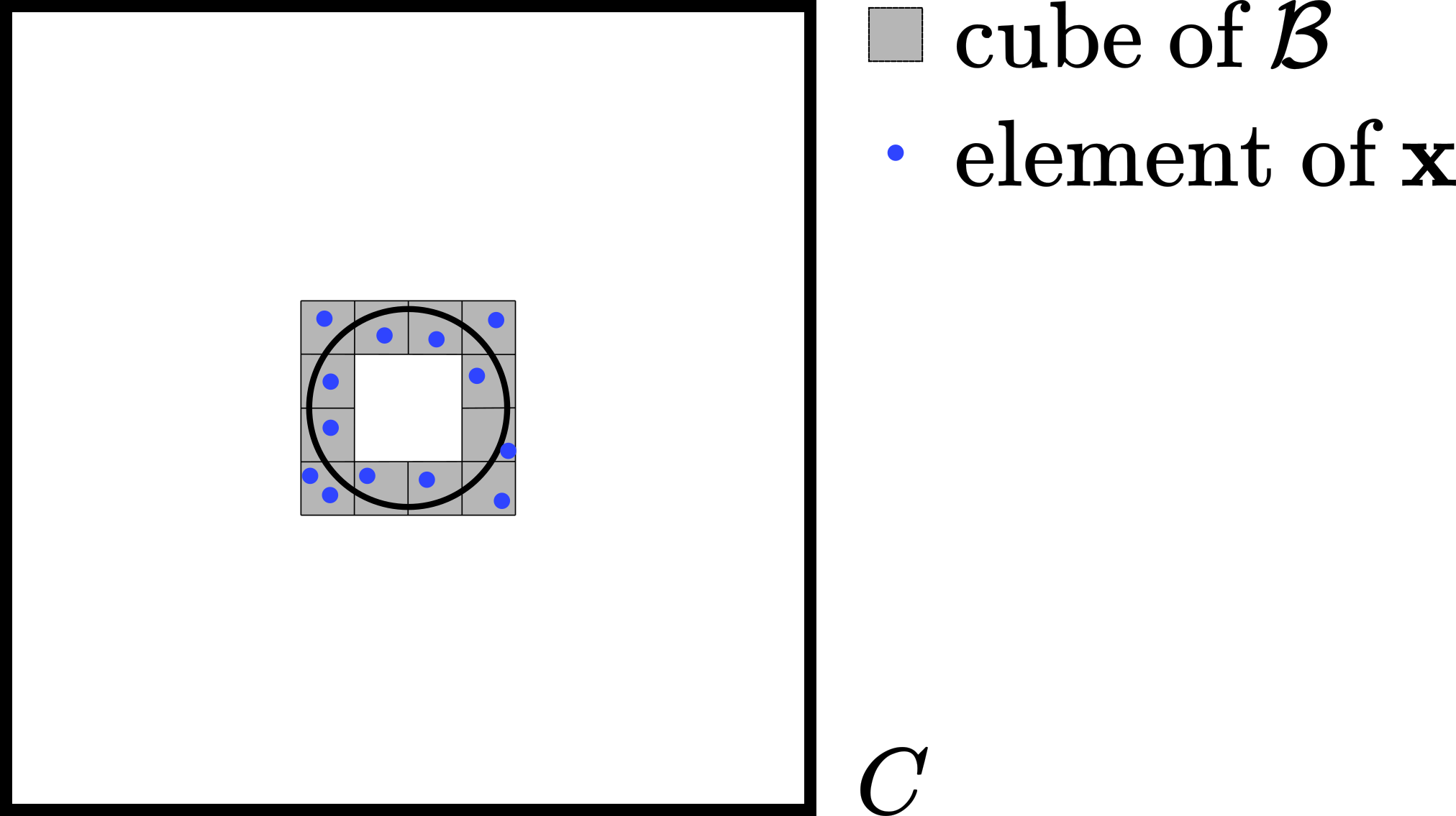}
\caption{\label{fig:circleFig} The setup in the proof of Lemma~\ref{lemma_euc_1}.}
\end{figure}

We may assume without loss of generality that $R=1$ and $C$ is centered at the origin. Let $S^i\subset \R^m$ be an $i$-dimensional sphere of diameter $1/4$ centered at the origin; note that $\PH_i\paren{S^i}$ consists of a single interval $\paren{0,1/8}$ for the \v{C}ech complex (a slightly different argument is required for the Rips complex).

Let 
\begin{equation}
\label{eq_acKappa}
\kappa=\min\paren{b,\frac{1}{8}-d,\frac{1}{24}}
\end{equation}
and
\[\lambda_0=\frac{m^{m/2} }{\kappa^m}\,.\]

Choose $\lambda>\lambda_0$ and set $\delta=\lambda^{-\frac{1}{m}}.$ Let $\mathcal{C}$ be the cubes in the standard tesselation of $\mathbb{R}^m$ by cubes of width $\delta$ and let
\[\mathcal{B}=\set{c\in\mathcal{C}:S^i\cap c\neq \varnothing}\,.\]
See Figure~\ref{fig:circleFig}.

If $\Xi\paren{\textbf{x},A,\mathcal{B}}=1,$ then
\[d_H\paren{\textbf{x}\cap C, S^i}<\kappa\]
where we used the fact that the length of the diagonal of an $m$-dimensional cube of width $\delta$ is $\delta\sqrt{m}.$  Stability and Equation~\ref{eq_acKappa} imply that $\PH_i\paren{\textbf{x}\cap C}$ includes an interval $\paren{\hat{b},\hat{d}}$ so that
\[\hat{b}<\kappa \leq b < d \leq \frac{1}{8}-\kappa < \hat{d} < \frac{1}{8}+\kappa \leq 1/6 \,.\]

By construction,
\[\frac{1}{2} d\paren{\textbf{x}\cap C,C^c} >\frac{1}{2}\paren{ d\paren{S^i,C^c}-\kappa}=\frac{3}{16}-\kappa/2\geq 1/6 >\hat{d}\,,\]
where $C^c$ is the complement of $C.$ Therefore, the the $\epsilon$-neighborhoods of $\textbf{x}\cap C'$ and $C^c$ are disjoint for all $\epsilon\leq \hat{d}$ and Equation~\ref{eq_euc_1} holds.
\end{proof}

\begin{proof}[Proof of Propostion~\ref{prop_AC}]
We will construct a set of bounded occupancy indicators of the form defined in the previous lemma, and apply Lemma~\ref{lemma_occupancy_2}. The reader may want to remind themselves of the definitions in Section~\ref{sec:occupancy}.

$\mu$ is absolutely continuous so its support contains a cube $C.$ Without loss of generality, we may assume that $C$ is a unit cube. Let $b_0=1/16,d_0=1/8,$ and $\lambda>\lambda_0,$ where $\lambda_0$ is as in the previous lemma. Set $\delta=n^{-1/m},$ and let  $\set{D_1,\ldots,D_{s}}$ be the sub-cubes in the cubical tessellation of width $\delta$ which are fully contained within $C.$ There is a constant $\kappa>0$ depending only on $m$ so that 
\begin{equation}
\label{eqn_propAC1}
s\geq \kappa\delta^{-m}\geq \kappa n\,.
\end{equation}

Assume $\delta$ is sufficiently small so that $k>\delta^m/2.$  Let $l\in\set{1,\ldots,s},$ and let $A_l$ and $\mathcal{B}_l$ be the set and collection of disjoint sub-cubes of  width  $\delta\lambda^{-\frac{1}{m}}$ contained in $C_l$ given by the previous lemma. It follows from the statement of that lemma that 
\begin{equation}
\label{eqn_propAC2}
p_i\paren{\textbf{x}_n,\frac{1}{16}n^{-1/m}}\geq \sum_{j=1}^{s}\Xi\paren{\textbf{x}_n,A_l,\mathcal{B}_l}\,.
\end{equation}

Let $c$ be the constant appearing the definition of Ahlfors regularity, $v_0$ be the volume of a unit ball in $\mathbb{R}^m,$ and 
\[q=\frac{c m^{m/2}v_0}{2^m}\qquad \text{and}\qquad p=\frac{v_0}{\lambda 2^m c}\,.\]
 If $E_0$ is a ball of radius $\delta\sqrt{m}/2$ containing $C,$

\[\mu\paren{A_l}\leq \mu\paren{E_0}\leq c\,\vol\paren{E_0}=c v_0 \paren{\delta\sqrt{m}/2}^m=q \delta^m=\frac{q}{n}\,.\]

Similarly, if $B$ is a cube of $\mathcal{B}_l,$ and $E_1$ is a ball of radius $\delta\lambda^{-\frac{1}{m}}/2$ contained in $B$
\[\mu\paren{B}\geq \mu\paren{E_1} \geq \frac{1}{c} \paren{\delta\lambda^{-\frac{1}{m}}}^m v_0=\frac{p}{n}\,.\]
Let $r=\abs{\mathcal{B}_l}$ and note that $r$ depends only on $\lambda_0, b_0,$ and $d_0.$ $\Xi\paren{\textbf{x}_n,A_l,\mathcal{B}_l}$ is a $n,p,q,r$-bounded occupancy indicator for each $l$, so by Lemma~\ref{lemma_occupancy_2} there exists a $\gamma_0>0$ so that
\begin{equation}
\label{eqn_propAC3}
\frac{1}{s}\sum_{j=1}^{s}\Xi\paren{\textbf{x}_n,A_l,\mathcal{B}_l}\geq \gamma_0
\end{equation}
with high probability as $n\rightarrow\infty.$ We have that
\begin{align*}
p_i\paren{\textbf{x}_n,\frac{1}{16}n^{-1/m}}\geq & \;\; \sum_{j=1}^{s}\Xi\paren{\textbf{x}_n,A_l,\mathcal{B}_l} &&\text{by Eqn.~\ref{eqn_propAC2}}\\
\geq & \;\; \gamma_0 s &&\text{by Eqn.~\ref{eqn_propAC3}}\\
\geq & \;\; \gamma_0 \kappa n &&\text{by Eqn.~\ref{eqn_propAC1}}
\end{align*}
with high probability as $n\rightarrow\infty.$

Then, by counting intervals of length greater than $n^{-1/m}/16,$
\begin{align*}
\lim_{n\rightarrow\infty}  n^{-\frac{m-\alpha}{m}} E_\alpha^i\paren{\textbf{x}_n}\geq& \;\; \lim_{n\rightarrow\infty}   n^{-\frac{m-\alpha}{m}} p_i\paren{\textbf{x}_n,\frac{1}{16} n^{-1/m}}\paren{\frac{1}{16}\paren{n^{-1/d}}}^\alpha\\
=& \;\; 16^{-\alpha} \lim_{n\rightarrow\infty}   \frac{1}{n}  p_i\paren{\textbf{x}_n,\frac{1}{16} n^{-1/d}}\\
\geq & \;\; 16^{-\alpha} \kappa \gamma_0 \\
\coloneqq & \;\; \Omega_0
\end{align*}
with high probability as $n\rightarrow\infty.$
\end{proof}
It is straightforward to modify the previous argument to work for any metric space $X$ with a subset $Y$ that is the bi-Lipschitz image of a cube in Euclidean space. In particular, if the bi-Lipschitz constant is $L$, it would suffice to take $b_0$ to be $\frac{1}{16 L^2}$ and $d_0$ to be $\frac{1}{8}$ and argue that an interval $\paren{b,d}\in\PH_i\paren{C}$ with $b<\frac{\delta}{16 L^2}<\frac{\delta}{8}$ corresponds to an interval $\paren{b_1,d_1}\in \PH_i\paren{X}$ with $b_1<\frac{\delta}{16 L} < \frac{\delta}{8 L} <d_1.$ 

\subsection{Non-triviality Constants}
\label{sec_nontriviality}

To prove the lower bound, we modify the approach in our paper on extremal $\PH$-dimension~\cite{2018schweinhart} to work in a probabilistic context. If $\mu$ is a $d$-Ahlfors regular measure on $\R^m$ and $\delta>0,$ let $\mathcal{C}_{\delta}\paren{\mu}$ be the cubes in the grid of mesh $\delta$ that intersect the support of $\mu.$  The basic idea is to sub-divide the grid of mesh $\delta$ so each cube contains $k^m$ sub-cubes. If $k$ is chosen carefully, we can find a positive fraction of cubes in $\mathcal{C}_\delta\paren{\mu}$ that contain enough cubes of $\mathcal{C}_{\delta/k}\paren{\mu}$ to guarantee a stable $\PH_i$ class. In fact, we can require that the sub-cubes have probability exceeding a certain threshold. We then control the number of stable $\PH_i$ classes realized by a random sample $\textbf{x}_n$ with Lemma~\ref{lemma_occupancy_2}.

In previous work~\cite{2018schweinhart}, we raised the question of how large a subset of the integer lattice can be without having a subset with ``stable'' $i$-dimensional persistent homology.
\begin{Definition}
\label{defn_nontrivial}
For $x\in \Z^m,$ let the cube corresponding to $x$ --- $C\paren{x}$ --- be the cube of width $1$ centered at $x.$ A subset $X$ of $\Z^m$ has a \textbf{stable} $i$-dimensional persistent homology class if there is a $c>0$ so that if $Y$ is any subset of $\cup_{x\in X}C\paren{x}$ satisfying
\[Y\cap C\paren{x}\neq \varnothing \quad \forall \; x\in X\,,\]
 then there is an $I\in PH_i\paren{Y}$ so that $\abs{I}>c$ (see Figure~\ref{fig:stable}). The supremal such $c$ is called the \textbf{size} of the stable persistence class.
\end{Definition}
Note that this notion depends on whether persistent homology is taken with respect to the Rips complex or the \v{C}ech complex, but is defined for both.

\begin{figure}
\centering  
\subfigure[]{\includegraphics[width=0.2\linewidth]{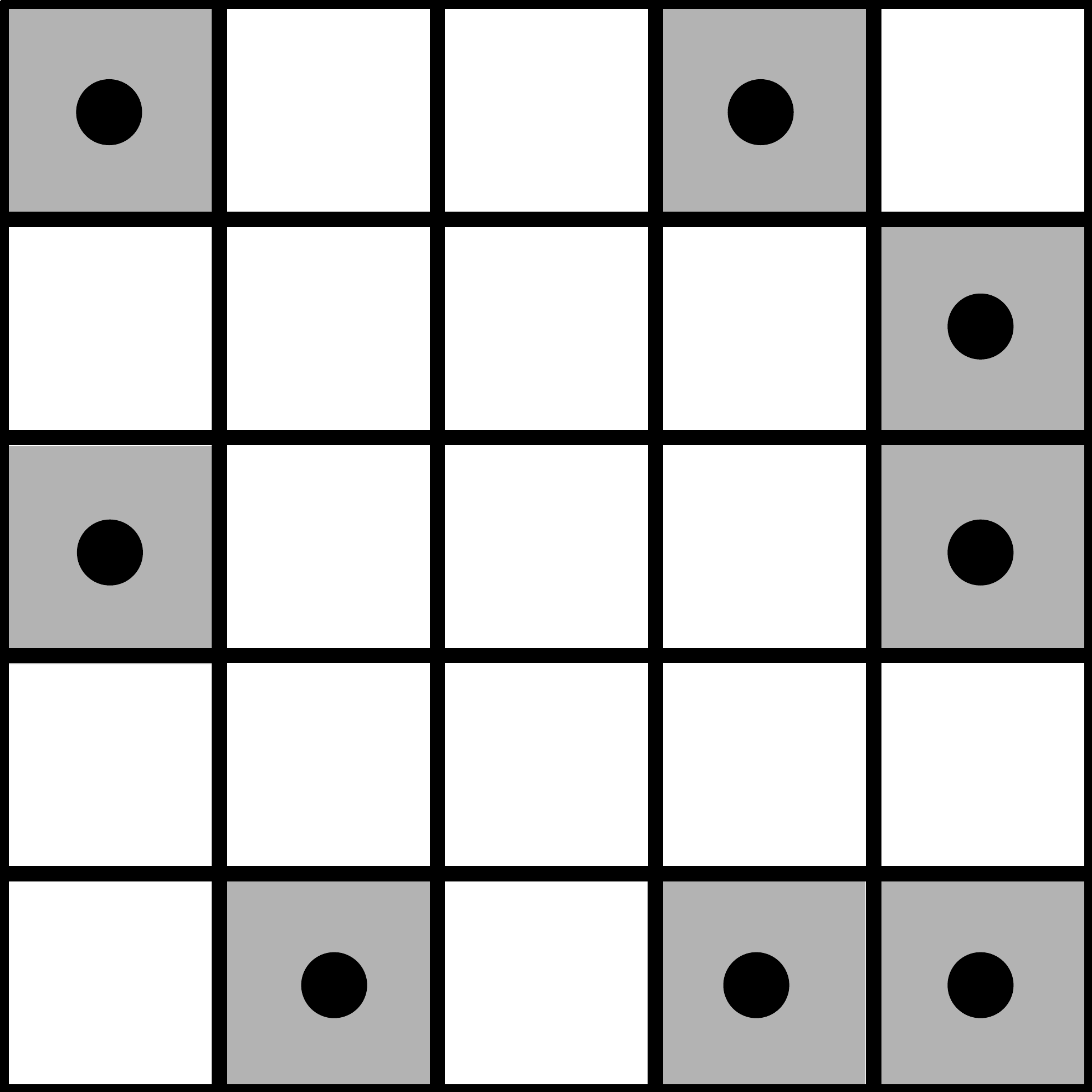}}
\subfigure[]{\includegraphics[width=0.2\linewidth]{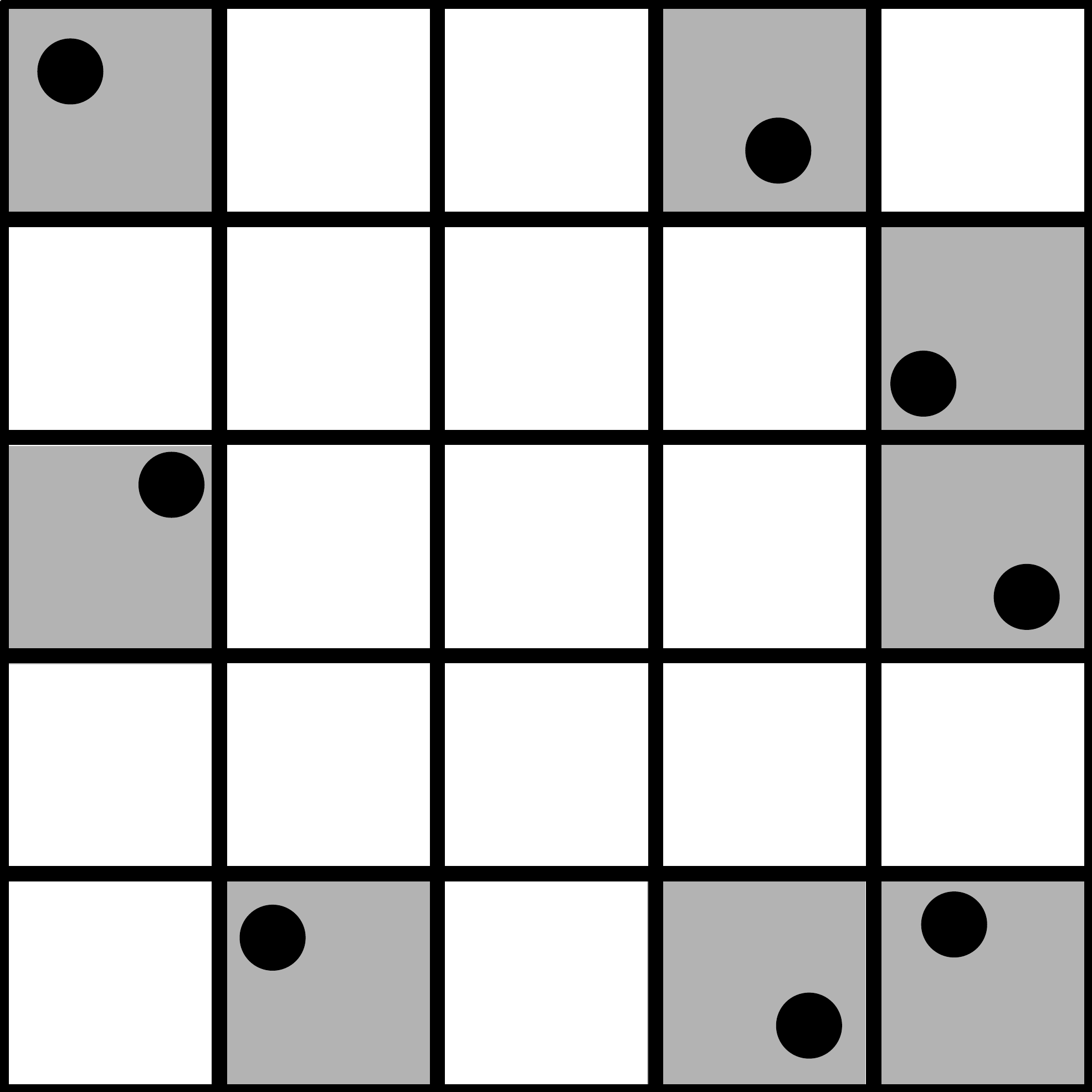}}
\subfigure[]{\includegraphics[width=0.2\linewidth]{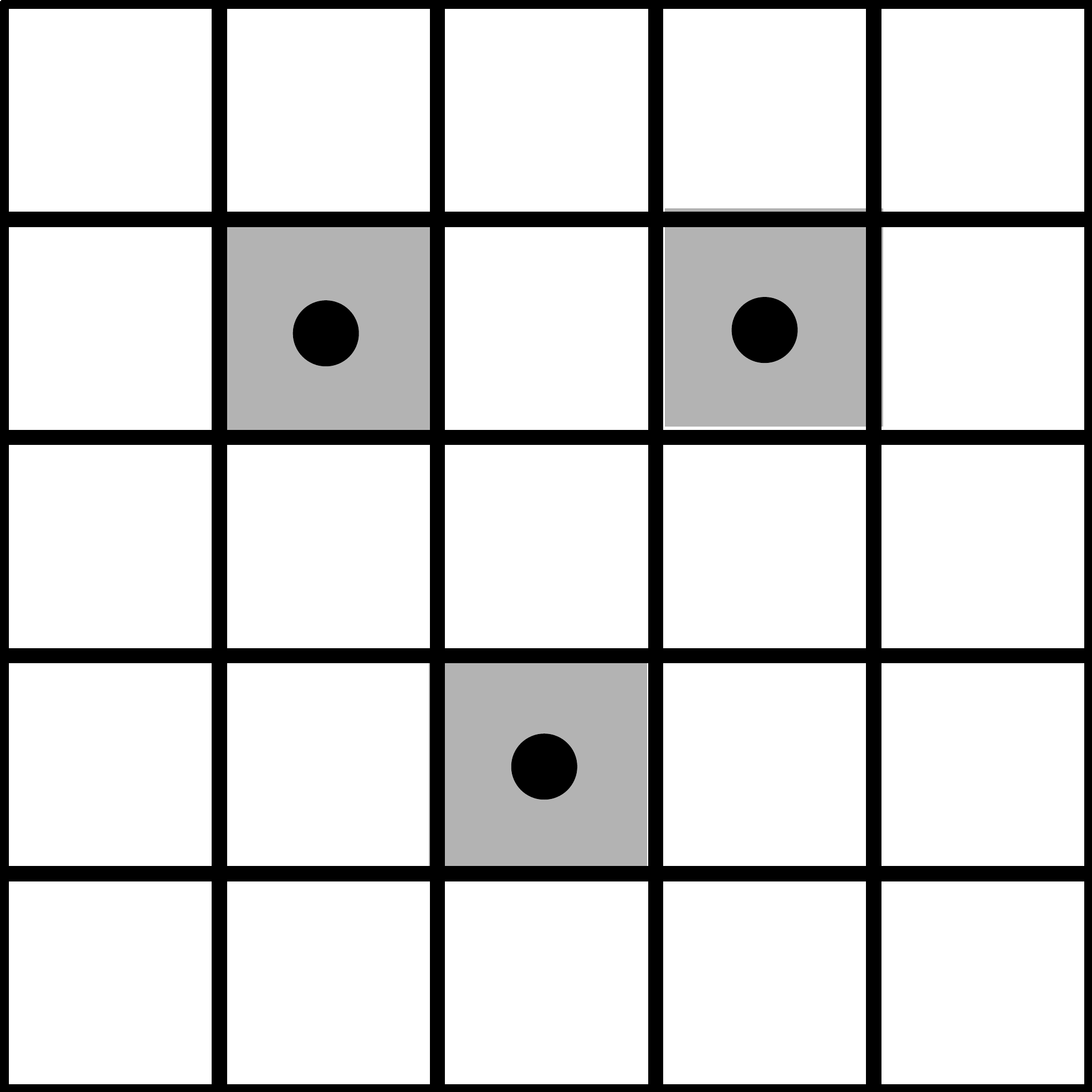}}
\subfigure[]{\includegraphics[width=0.2\linewidth]{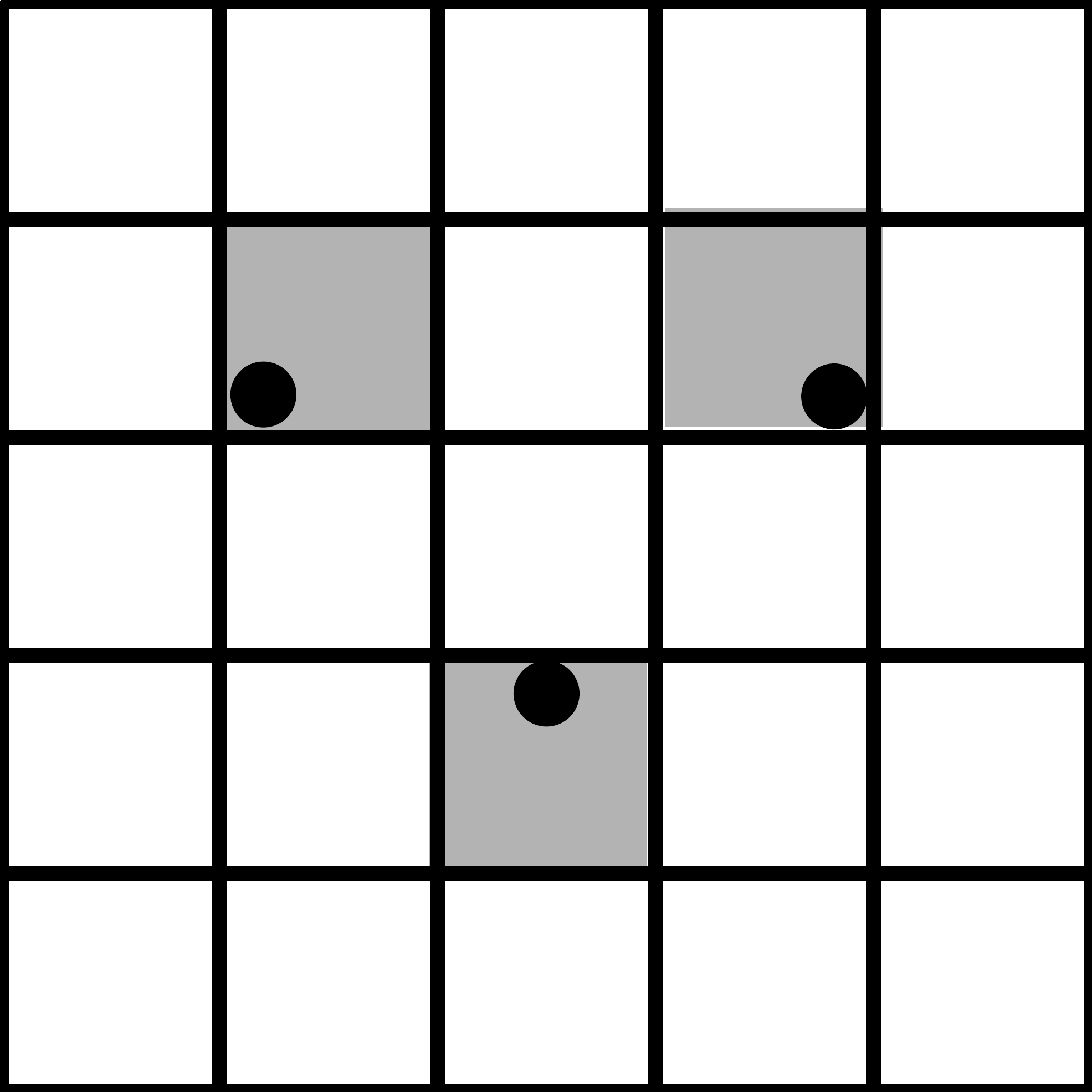}}
	\caption{\label{fig:stable}The \v{C}ech $\PH_1$ class of the lattice points corresponding to the gray cubes in (a) and (b)is stable --- any choice of one point in each cube will give the vertices of an acute triangle, and therefore a set with non-trivial $\PH_1.$ The one in (c) and (d) is not, because the points in (d) form an obtuse triangle so the persistent homology of that set is trivial.~\cite{2018schweinhart}}
\end{figure}

\begin{Definition}
\label{defn_gamma}
Let $\xi_i^m\left(N\right)$ be the size of the largest subset $X$ of $\set{1,\ldots,N}^m\subset \mathbb{Z}^m$ so that no subset $Y$ of $X$ has a stable $\PH_i$-class. Define
\[\gamma_i^m=\liminf_{N\rightarrow\infty} \frac{\log\paren{\xi_i^m\paren{N}}}{\log\paren{N}}\,.\]
\end{Definition}
$\gamma_i^m$ may depend on whether persistent homology is taken with respect to the Rips complex or the \v{C}ech complex, but we suppress the dependence here. $\gamma_0^m=0$ for all $m\in\N:$ any subset of $\Z^m$ with more than $3^m$ points has a minimum spanning tree edge of length at least $1$ and thus a $\PH_0$ interval of at length at least $1/2$ for the \v{C}ech complex and one of length at least $1$ for the Rips complex (at least $3^m$ points are necessary to rule out point sets with points from neighboring cubes). In~\cite{2018schweinhart}, we proved that $\gamma_1^m\leq m-\frac{1}{2}$ if persistent homology is taken with respect to the \v{C}ech complex. Note that Definition~\ref{defn_gamma} does not include the same restriction on the size as in~\cite{2018schweinhart}.

\subsection{Ahlfors Regular Measures and Box Counting}
\label{sec:AhlforsCubeCount}
Before proceeding to the proof of the lower bound, we prove two technical lemmas about the asymptotics of the number of cubes that intersect the support of a $d$-Ahlfors regular measure. The first is similar to Lemma~\ref{lemma_ballcount} on the asymptotic number of disjoint balls. Let $\mathcal{C}_\delta$ be the cubes in the cubical grid of mesh $\delta$ in $\mathbb{R}^m$ centered at the origin, and for $\delta,a>0$ define 
\begin{equation}
\label{eq:cube0}
\mathcal{C}_{\delta,a}\paren{\mu}=\set{C\in \mathcal{C}_\delta: \mu\paren{C}\geq a \delta^{d}}
\end{equation}
and
\begin{equation}
\label{eq:cube1}
N_{\delta,a}\paren{\mu}=\abs{\mathcal{C}_{\delta,a}\paren{\mu}}\,.
\end{equation}
(The upper and lower box dimensions of a subset of Euclidean space can be defined in terms of the asymptotic properties of $N_{\delta,0}\paren{X},$ analogously to Definition~\ref{defn:upperbox}.)

\begin{Lemma}
\label{lemma_respect}
If $\mu$ is a is $d$-Ahlfors regular measure with support $X\subset \R^m$, then there exist real numbers $0<c_0\leq c_1<\infty$ depending on $m$ and the constants $c$ and $d$ appearing in the definitions of Ahlfors regularity so that
\begin{equation}
\label{eqn_respect0}
c_0\delta^{-d} \leq N_{\delta,\hat{c}}\paren{\mu}\leq c_1 \delta^{-d}
\end{equation}
for all $\delta<\delta_0,$ where $\hat{c}=\frac{1}{c 2^m}.$ Similarly, there exist real numbers $0<c'_0\leq c'_1<$ depending on $c,$ $d,$ and $m$ so that
\begin{equation}
\label{eqn_respect1}
c'_0\delta^{-d} \leq N_{\delta,0}\paren{\mu} \leq  c'_1 \delta^{-d}
\end{equation}
for all $\delta<\delta_0.$
\end{Lemma}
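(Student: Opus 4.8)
The plan is to prove the two displayed inequalities by establishing, in each case, the upper bound for the \emph{larger} count $N_{\delta,0}\paren{\mu}$ and the lower bound for the \emph{smaller} count $N_{\delta,\hat c}\paren{\mu}$. Since $\mathcal{C}_{\delta,\hat c}\paren{\mu}\subseteq \mathcal{C}_{\delta,0}\paren{\mu}$ we always have $N_{\delta,\hat c}\paren{\mu}\leq N_{\delta,0}\paren{\mu}$, so these two estimates together yield all four bounds. Throughout I would use half-open cubes, so that $\set{C\in\mathcal{C}_\delta}$ tiles $\R^m$ exactly and the cubes are genuinely disjoint (this matters because closed cubes share faces that may carry positive $\mu$-mass when $d<m$).

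The upper bound on $N_{\delta,\hat c}\paren{\mu}$ is immediate: the cubes of $\mathcal{C}_{\delta,\hat c}\paren{\mu}$ are disjoint and each has measure at least $\hat c\,\delta^d$, so $1=\mu\paren{\R^m}\geq N_{\delta,\hat c}\paren{\mu}\,\hat c\,\delta^d$, giving $N_{\delta,\hat c}\paren{\mu}\leq \frac{1}{\hat c}\,\delta^{-d}=c\,2^m\,\delta^{-d}$, the source of the stated $\hat c$. For the upper bound on $N_{\delta,0}\paren{\mu}$ I would run a packing argument: color the cubes of $\mathcal{C}_\delta$ by the parity of their integer index in each coordinate, producing $2^m$ color classes, and pick $x_C\in C\cap X$ in every cube $C$ meeting $X$. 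Two distinct cubes of the same color differ by an even amount in some coordinate, hence are separated by a full empty cube there, so their chosen points are at distance at least $\delta$; the balls $\set{B_{\delta/2}\paren{x_C}}$ inside one color class are therefore disjoint and centered on $X$, and the Ball-counting Lemma (Lemma~\ref{lemma_ballcount}) bounds their number by $M_{\delta/2}\paren{X}\leq c\,2^d\,\delta^{-d}$. Summing over the $2^m$ classes gives $N_{\delta,0}\paren{\mu}\leq c\,2^{m+d}\,\delta^{-d}$, which also re-proves the upper bound on $N_{\delta,\hat c}\paren{\mu}$.

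For the lower bound I would start from a maximal $\delta$-separated subset $\set{x_1,\ldots,x_K}$ of $X$. Maximality forces the balls $\set{B_\delta\paren{x_j}}$ to cover $X$, so $1\leq\sum_j \mu\paren{B_\delta\paren{x_j}}\leq K\,c\,\delta^d$ and $K\geq \frac{1}{c}\,\delta^{-d}$. For each $j$ the ball $B_{\delta/2}\paren{x_j}$ has measure at least $\frac{1}{c}\paren{\delta/2}^d$ by Ahlfors regularity and meets at most $2^m$ grid cubes, so at least one of these cubes carries measure at least $\frac{1}{c\,2^{m+d}}\,\delta^d$; call it $C_j$. A fixed cube can arise as $C_j$ for at most a bounded number $L_m$ of indices, since all such $x_j$ lie within distance $\paren{\tfrac12+\sqrt m}\delta$ of its center and are $\delta$-separated. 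Hence the number of distinct cubes among $\set{C_j}$ is at least $K/L_m\geq \frac{1}{c\,L_m}\,\delta^{-d}$, and these cubes have positive measure, which gives the lower bound for $N_{\delta,0}\paren{\mu}$ (and, with $\hat c$ replaced by $\tfrac{1}{c\,2^{m+d}}$, for the corresponding heavy-cube count).

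The main obstacle is exactly this last step \emph{at the stated threshold} $\hat c=\frac{1}{c\,2^m}$. The direct ball argument naturally produces heavy cubes only at the smaller threshold $\frac{1}{c\,2^{m+d}}$, because the factor $2^{-d}$ is lost in passing from a ball of radius $\delta$ to one of radius $\delta/2$ that is guaranteed to meet at most $2^m$ cubes; and a pure counting argument cannot recover it, since the crude estimate $N_{\delta,0}\paren{\mu}\,\hat c\,\delta^d\leq 2^d$ on the total mass of the light cubes is already $\geq 1$ and leaves no forced mass for the heavy ones. To close the gap I would need a sharper geometric input — for instance localizing each net point in the central region of a cube of one of $2^m$ shifted grids, so that an inscribed ball of radius comparable to $\delta$ lies in a single cube — and then check that the resulting constant together with the bounded-multiplicity count lands at or above $\frac{1}{c\,2^m}\,\delta^d$. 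Alternatively, one verifies that the applications in Section~\ref{sec:AhlforsCubeCount} only require a positive density of cubes whose measure exceeds \emph{some} fixed positive multiple of $\delta^d$, in which case the threshold $\frac{1}{c\,2^{m+d}}$ already suffices.
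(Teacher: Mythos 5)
Your proposal is correct in substance but reaches the four bounds by a different route than the paper, and the one place where you flag a gap is in fact a soft spot in the lemma's stated constant rather than in your method. The paper's proof pivots on a two-sided comparison
\[
\frac{1}{3^m}\, N_{\delta,0}\paren{\mu} \;\leq\; N_{\delta,\hat c}\paren{\mu} \;\leq\; N_{\delta,0}\paren{\mu}\,,
\]
obtained by noting that for each cube $C$ meeting $X$ and any $x\in C\cap X$, the ball $B_\delta\paren{x}$ has mass greater than $\frac1c\delta^d$ and meets only $C$ and its neighbours, so some cube adjacent to $C$ is heavy, while each heavy cube is adjacent to at most $3^m$ cubes. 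With that comparison the paper only needs one lower bound (for $N_{\delta,0}$, bounding each cube's mass above by $c\paren{\delta\sqrt m}^d$) and one upper bound (for $N_{\delta,\hat c}$, by near-disjointness of heavy cubes), and transfers each to the other count. You instead prove the upper bound on $N_{\delta,0}$ by a parity-class packing argument through Lemma~\ref{lemma_ballcount}, and the lower bounds from a maximal $\delta$-net with a bounded-multiplicity pigeonhole; both work, and your use of half-open cubes cleans up the paper's $3^m$ overlap bookkeeping in its upper bound.

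On the threshold: your worry is legitimate, and the paper does not actually escape it. The step that produces $\hat c=\frac{1}{c\,2^m}$ asserts that $B_\delta\paren{x}$ meets at most $2^m$ cubes of mesh $\delta$; a ball of radius $\delta$ has diameter $2\delta$ and can meet three grid intervals per coordinate, hence $3^m$ cubes, so the honest output of that argument is the threshold $\frac{1}{c\,3^m}$, while the $2^m$ count is valid only for a ball of radius $\delta/2$, which yields exactly your $\frac{1}{c\,2^{m+d}}$. Either way the stated $\hat c$ is off by a benign dimensional constant, and since these alternative thresholds are \emph{smaller} than $\hat c$, a lower bound at them does not formally imply the lower bound in Equation~\ref{eqn_respect0} as written. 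Your second proposed resolution is the correct one: Definition~\ref{cube_defn} and Lemmas~\ref{lemma_lower_1} and~\ref{lemma_lower_2} only use that there is \emph{some} fixed $a>0$ with $N_{\delta,a}\paren{\mu}\geq c_0\delta^{-d}$ together with $N_{\delta,0}\paren{\mu}\leq c_1'\delta^{-d}$, so redefining $\hat c$ as $\frac{1}{c\,2^{m+d}}$ (or $\frac{1}{c\,3^m}$) throughout changes nothing downstream, and no further geometric sharpening is needed.
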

\begin{proof}

Let $C$ be a cube in the grid of mesh $\delta$ that intersects $X,$ and $x\in C \cap X.$ First, we show that bounds for $N_{\delta,0}\paren{\mu}$ imply bounds for $N_{\delta,\hat{c}}\paren{\mu},$ and vice versa. By Ahlfors regularity, 
\[\mu\paren{B_{\delta}\paren{x}}> \frac{1}{c} \delta^{d}\,.\]
Also, $B_{\delta}\paren{x}$ intersects at most $2^m$ cubes in the grid of mesh $\delta,$ so at least one cube adjacent to $C$ has measure exceeding $\hat{c}\delta^{d}$ (where two cubes are adjacent if they share at least one point). Each cube of $\mathcal{C}_{\delta,\hat{c}}\paren{\mu}$ is adjacent to at most $3^m$ cubes of $\mathcal{C}_{\delta}\paren{\mu},$ so we can find a lower bound for $N_{\delta,\hat{c}}\paren{\mu}$ in terms of the number of cubes that intersect the support:
\begin{equation}
\label{eqn_respect2}
\frac{1}{3^m}N_{\delta,0}\paren{\mu} \leq N_{\delta,\hat{c}}\paren{\mu}\leq N_{\delta,0}\paren{\mu}\,,
\end{equation}
where the upper bound is trivial. 

To show the lower bounds in the statement, we compute
\begin{align*}
1=&\;\;\mu\paren{X}\\
\leq & \;\; \sum_{C\in\mathcal{C}_{\delta,0}\paren{\mu}}\mu\paren{C} \\
\leq & \;\; c \delta^d m^{d/2} N_{\delta,0}\paren{\mu}\\
\implies & \;\;  N_{\delta,0}\paren{\mu}\geq \frac{1}{c}m^{d/2} \delta^{-d}
\end{align*}
which is the lower bound in Equation~\ref{eqn_respect1} with $c_0'=\frac{1}{c}m^{d/2}.$ Then, by Equation~\ref{eqn_respect2}, the lower bound in Equation~\ref{eqn_respect0} holds with $c_0'=3^{-m}\frac{1}{c}m^{d/2}.$

For the upper bounds, note that the intersection of two cubes may have positive measure, but a cube can share measure with only $3^m-1$ adjacent cubes. It follows that
\begin{align*}
1=&\;\;\mu\paren{X}\\
\geq & \;\; \frac{1}{3^m} \hat{c} \delta^d N_{\delta,\hat{c}}\paren{\mu}\\
\implies & \;\;  N_{\delta,\hat{c}}\paren{\mu} \leq c \, 6^m \delta^{-d}\,,
\end{align*}
which is the upper bound in Equation~\ref{eqn_respect0} with $c_1=c \, 6^m.$ Then, upper bound in Equation~\ref{eqn_respect1} holds with $c_1'=c_1=c \, 18^m,$ using Equation~\ref{eqn_respect2}.
\end{proof}

\begin{figure}
\centering
\includegraphics[width=.7\textwidth]{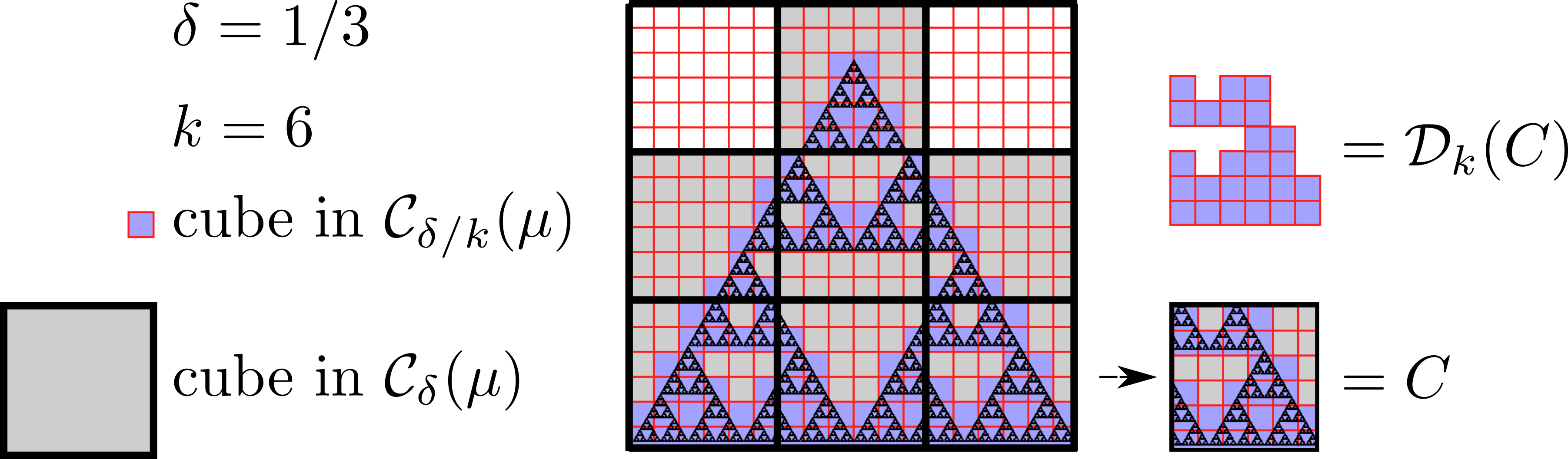}
\caption{Collections of cubes associated with the natural measure $\mu$ on the Sierpi\'{n}ski triangle.}
\label{fig_cdelta}
\end{figure}

We consider cubes at two different scales: cubes of width $\delta,$ and smaller cubes obtained by divding each cube of width $\delta$ into $k^m$ sub-cubes. Our eventual goal is to count the number of cubes of width $\delta$ which contain sufficiently many positive measure sub-cubes of width $\delta/k$ to define an occupancy event implying a persistent homology class. The next definition introduces collections of cubes corresponding to a measure $\mu,$ some of which are illustrated in Figure~\ref{fig_cdelta}. 

\begin{Definition}[Cube Collections Corresponding to a Measure]
\label{cube_defn}
Let $\mu$ be a $d$-Ahlfors regular measure on $\mathbb{R}^m$ and let $\mathcal{C}_{\delta,a}\paren{\mu}$ be as defined in Equation~\ref{eq:cube0}. For $k\in\N,$ $\delta>0,$ and $C\in\mathcal{C}_{\delta,0}\paren{\mu},$ define 
\begin{equation}
\label{eq:cube2}
\mathcal{D}_{k}\paren{C}=\set{D \in \mathcal{C}_{\delta/k,\hat{c}}\paren{\mu}: D\subset C}
\end{equation}
where  $\hat{c}=\frac{1}{c 2^m}$ as in Lemma~\ref{lemma_respect} above, and set
\begin{equation*}
D_{k}\paren{C}=\abs{\mathcal{D}_{k}\paren{C}}\,.
\end{equation*}
Next, we define a collection of cubes in $\mathcal{C}_{\delta,0}\paren{\mu}$ which contain sufficiently many sub-cubes. For $\delta>0$ and $0<\beta<d,$ let
\begin{equation*}
\mathcal{C}_\delta^{k,\beta}=\set{C\in \mathcal{C}_{\delta,0}\paren{\mu}: D_{k}\paren{C}>k^{\beta}}
\end{equation*}
and
\begin{equation*}
M\paren{\delta,k,\beta}=\abs{\mathcal{C}_\delta^{k,\beta}}\,.
\end{equation*}
\end{Definition}

Next, we prove a technical lemma establishing a lower bound for $M\paren{\delta,k,\beta}.,$ The argument is similar to that of Lemma~26 in~\cite{2018schweinhart}.

\begin{Lemma}
\label{lemma_lower_1}
If $\mu$ is a $d$-Ahlfors regular measure supported on $\R^m$ and $0<\beta<d,$ then there exists a $K$ so that for any $k>K$ there exit positive constants $\delta_1$ and $c_2$ so that 
\begin{equation}
\label{eq_lower_statement}
M\paren{\delta,k,\beta}> c_2 \delta^{-d}
\end{equation}
for all $\delta<\delta_1.$ 
\end{Lemma}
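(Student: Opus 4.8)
The plan is to run a two-scale double-counting (averaging) argument, using only the two-sided box-count estimates of Lemma~\ref{lemma_respect} as external input. The crucial observation is that, since the grid of mesh $\delta/k$ refines the grid of mesh $\delta$, every sub-cube $D\in\mathcal{C}_{\delta/k,\hat c}\paren{\mu}$ is contained in exactly one cube $C$ of $\mathcal{C}_\delta$; and because $\mu\paren{D}>0$ forces $C$ to meet the support, that $C$ automatically lies in $\mathcal{C}_{\delta,0}\paren{\mu}$. Consequently each heavy sub-cube is counted exactly once, giving the identity
\[\sum_{C\in\mathcal{C}_{\delta,0}\paren{\mu}} D_k\paren{C}=N_{\delta/k,\hat c}\paren{\mu}\,,\]
so the total amount being distributed among the $\delta$-cubes is controlled from below.

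First I would apply the lower bound of Lemma~\ref{lemma_respect} at the finer scale $\delta/k$ to obtain $\sum_C D_k\paren{C}=N_{\delta/k,\hat c}\paren{\mu}\geq c_0\paren{\delta/k}^{-d}=c_0\,k^d\delta^{-d}$, which is valid whenever $\delta<\delta_0$ since then $\delta/k<\delta_0$. Next I would estimate the contribution of the cubes \emph{not} counted by $M\paren{\delta,k,\beta}$: by definition each such $C$ satisfies $D_k\paren{C}\leq k^\beta$, and there are at most $N_{\delta,0}\paren{\mu}\leq c_1'\,\delta^{-d}$ cubes in $\mathcal{C}_{\delta,0}\paren{\mu}$ altogether by the upper bound of Lemma~\ref{lemma_respect}, so these cubes contribute at most $c_1'\,\delta^{-d}k^\beta$ to the sum. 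Finally, each of the $M\paren{\delta,k,\beta}$ remaining cubes contributes at most $k^m$, since a cube of width $\delta$ contains exactly $k^m$ sub-cubes of width $\delta/k$. Combining the three estimates yields
\[c_0\,k^d\delta^{-d}\leq M\paren{\delta,k,\beta}\,k^m+c_1'\,\delta^{-d}k^\beta\,.\]

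To finish, I would absorb the lower-order term using the hypothesis $\beta<d$: taking $K=\paren{2c_1'/c_0}^{1/(d-\beta)}$ guarantees $c_1'\,k^\beta\leq \tfrac{c_0}{2}k^d$ for every $k>K$, so that $M\paren{\delta,k,\beta}\,k^m\geq \tfrac{c_0}{2}k^d\delta^{-d}$ and hence $M\paren{\delta,k,\beta}\geq \tfrac{c_0}{2}\,k^{d-m}\,\delta^{-d}$. Setting $c_2=\tfrac{c_0}{3}\,k^{d-m}$ (a positive constant depending on $k$, $m$, and the Ahlfors constants) and $\delta_1=\delta_0$ then gives the strict inequality $M\paren{\delta,k,\beta}>c_2\,\delta^{-d}$ for all $\delta<\delta_1$, as required.

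I do not expect a serious obstacle; this is an elementary pigeonhole computation. The two points that genuinely need care are: (i) making sure the $\delta/k$-grid really refines the $\delta$-grid, so that $\sum_C D_k\paren{C}=N_{\delta/k,\hat c}\paren{\mu}$ holds as an \emph{equality} (if the grids only partially aligned one would get an inequality in the useless direction); and (ii) checking that invoking Lemma~\ref{lemma_respect} at the finer scale does not shrink the admissible range of $\delta$, which it does not since $\delta/k\leq\delta$. The inequality $\beta<d$ is precisely what renders the nuisance term $c_1'\,\delta^{-d}k^\beta$ negligible against the main term $c_0\,k^d\delta^{-d}$ once $k$ is large.
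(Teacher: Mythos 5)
Your proposal is correct and follows essentially the same route as the paper: a two-scale double count of the heavy sub-cubes in $\mathcal{C}_{\delta/k,\hat c}\paren{\mu}$ over the cubes of $\mathcal{C}_{\delta,0}\paren{\mu}$, bounded on both sides via Lemma~\ref{lemma_respect}, with $K$ chosen so that $\beta<d$ makes the coefficient of $\delta^{-d}$ positive. The only differences are cosmetic --- you give an explicit $K$ and shave $c_2$ to $\tfrac{c_0}{3}k^{d-m}$ to secure the strict inequality, where the paper simply takes $c_2=c_0k^{d-m}-c_1'k^{\beta-m}$.
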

\begin{proof}

Let $c_0,$ $c_1',$ and $\delta_0$ be the constants from Lemma~\ref{lemma_respect} so 
\begin{equation}
\label{eq_lower_0}
N_{\delta,0}\paren{\mu}\leq c_1'\delta^{-d} \qquad \text{and} \qquad N_{\delta,\hat{c}}\paren{\mu}\geq c_0\delta^{-d}
\end{equation}
for all $\delta<\delta_0.$

A cube in $\mathcal{C}_{\delta,0}\paren{\mu}$ is either an element of $\mathcal{C}_\delta^{k,\beta}$ and contains between $k^\beta$ and $k^m$ sub-cubes of  $\mathcal{C}_{\delta/k,\hat{c}}\paren{\mu},$ or is contained in $C_{\delta,0}\paren{\mu}\setminus \mathcal{C}_\delta^{k,\beta}$ and can contain at most $k^\beta$  sub-cubes in that set. On the other hand, each sub-cube in $\mathcal{C}_{\delta/k,\hat{c}}\paren{\mu}$ is contained in exactly one larger cube in $\mathcal{C}_{\delta,0}\paren{\mu}.$ Therefore,
\begin{align*}
N_{\delta/k,\hat{c}}\paren{\mu}\leq &\;\; k^m M\paren{\delta,k,\beta}+k^{\beta} \abs{C_{\delta,0}\paren{\mu}\setminus \mathcal{C}_\delta^{k,\beta}}\\
 \leq & \;\; k^m M\paren{\delta,k,\beta}+k^{\beta} N_{\delta,0}\paren{\mu}\,.
\end{align*}

Re-arranging terms, we have that
\begin{align*}
M\paren{\delta,k,\beta}\geq &\;\; \frac{N_{\delta/k,\hat{c}}\paren{\mu} - k^{\beta} N_{\delta,0}\paren{\mu}}{k^m}\\
\geq &\;\; \frac{c_0 k^d \delta^{-d}-k^{\beta}c_1'\delta^{-d}}{k^m} &&\text{by Eqn.~\ref{eq_lower_0}}\\
=& \;\; \paren{c_0 k^{d-m} -c_1 ' k^{\beta-m}} \delta^{-d}\,.
\end{align*}

As $\beta<d,$ we can choose $K$ sufficiently large so that if $k>K$ then the coefficient
\[c_2\coloneqq \paren{c_0 k^{d-m} -c_1 ' k^{\beta-m}}\,,\]
is positive, and Equation~\ref{eq_lower_statement} holds for all $\delta<\delta_0,$ as desired. 

\end{proof}

\subsection{Proof of the Lower Bound in Theorems~\ref{thm_extremal} and~\ref{thm_probabalistic}}

We require one more lemma before proving the lower bound. The idea is similar to that of Lemma~\ref{lemma_dominated_mst}: we assemble a collection of occupancy indicators which each imply the existence of a persistence interval of a given length using Definition~\ref{defn_gamma} and Lemma~\ref{lemma_lower_1}, and apply Lemma~\ref{lemma_occupancy_2} to bound the total number of intervals in probability.

\begin{Lemma}
\label{lemma_lower_2}
If $\mu$ be a $d$-Ahlfors regular measure on $\R^m$ with $d>\gamma_i^m,$ then there exist positive real numbers $\epsilon_0$ and $\Omega_1$ so that
\[\lim_{n\rightarrow\infty} \frac{1}{n} p_i\paren{\textbf{x}_n, \epsilon_0 \, n^{-1/d}} \geq  \Omega_1\]
with high probability.
\end{Lemma}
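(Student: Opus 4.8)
The plan is to adapt the strategy of Lemma~\ref{lemma_dominated_mst} (and of Proposition~\ref{prop_AC}) to higher persistent homology, replacing the single occupied-ball/empty-annulus indicator by one built from the non-triviality constants of Definition~\ref{defn_gamma}. Set $\delta = n^{-1/d}$ and subdivide each cube of the mesh-$\delta$ grid into $k^m$ sub-cubes of width $\delta/k$. Since $d > \gamma_i^m$, I first fix $\beta$ with $\gamma_i^m < \beta < d$; because $\gamma_i^m = \liminf_N \log\xi_i^m(N)/\log N$, I can then choose a single large integer $k$ with $\xi_i^m(k) < k^\beta$ and $k > K$, where $K$ is the constant of Lemma~\ref{lemma_lower_1}. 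That lemma now supplies constants $c_2>0$ and $\delta_1>0$ so that at least $M(\delta,k,\beta) > c_2\delta^{-d} = c_2 n$ of the mesh-$\delta$ cubes each contain more than $k^\beta$ sub-cubes of $\mathcal{C}_{\delta/k,\hat c}(\mu)$.

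For each such "good" cube $C$, the occupied sub-cubes form a subset of $\{1,\ldots,k\}^m$ of size exceeding $k^\beta > \xi_i^m(k)$, so by Definition~\ref{defn_gamma} it contains a subset $Y_C$ carrying a stable $\PH_i$ class. As there are only finitely many configurations in $\{1,\ldots,k\}^m$ for the fixed $k$, the sizes of these stable classes are bounded below by a single $c_{\min}>0$ and their cardinalities by a single $r\geq 1$. I then define the occupancy indicator $\Xi(\textbf{x}_n, A_C, \mathcal{B}_C)$ with $\mathcal{B}_C = Y_C$ and $A_C$ equal to a buffered cube around $C$ with the sub-cubes of $Y_C$ removed, exactly as $A$ is built in Lemma~\ref{lemma_euc_1}. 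Ahlfors regularity gives $\mu(B) \geq \hat c k^{-d}/n =: p/n$ for every $B \in \mathcal{B}_C$ and $\mu(A_C) \leq q/n$ for a constant $q$ depending only on $c,d,m,k$, so each $\Xi$ is an $n,p,q,r$-bounded occupancy indicator; selecting the good cubes from a single residue class of a sufficiently coarse sub-lattice (which still leaves a fixed fraction $c_2/L^m$ of them) makes the sets $A_C\cup\bigcup_{B\in\mathcal{B}_C}B$ pairwise disjoint. Lemma~\ref{lemma_occupancy_2} then yields $\gamma>0$ with $\tfrac1n\sum_C \Xi(\textbf{x}_n,A_C,\mathcal{B}_C) \geq \gamma$ with high probability.

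It remains to convert each firing indicator into a distinct long interval of $\PH_i(\textbf{x}_n)$. When $\Xi(\textbf{x}_n,A_C,\mathcal{B}_C)=1$, the points of $\textbf{x}_n$ in the buffered cube lie only in the sub-cubes of $Y_C$, so the stable-class property produces an interval of $\PH_i(\textbf{x}_n\cap C)$ whose length, after rescaling the unit-cube definition by the sub-cube width $\delta/k$, exceeds $\epsilon_0 n^{-1/d}$ with $\epsilon_0 \approx c_{\min}/k$. The empty buffer, together with the bound $\hat d \lesssim \delta$ on the death time of the class (the diameter of $C$ controls $\hat d$, just as $\hat d < 1/6$ in Lemma~\ref{lemma_euc_1}), separates $\textbf{x}_n\cap C$ from the rest of the sample at scale $\hat d$; the analogue of Equation~\ref{eq_euc_1} and additivity for well-separated sets (property (2) of Section~\ref{sec_properties}) then guarantee that these are genuine, pairwise distinct intervals of $\PH_i(\textbf{x}_n)$. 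Summing gives $p_i(\textbf{x}_n, \epsilon_0 n^{-1/d}) \geq \sum_C \Xi \geq \gamma n$ with high probability, so $\Omega_1 = \gamma$ works.

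I expect the isolation step to be the main obstacle: unlike the minimum-spanning-tree case, a stable $\PH_i$ class manufactured inside $C$ can in principle be killed or merged by points of $\textbf{x}_n$ in neighbouring cubes, because its death scale $\hat d$ is comparable to the full cube width rather than the sub-cube width. Controlling this forces the buffer region, hence the coarse sub-lattice spacing, to be a fixed multiple of $\delta$ exceeding $2\hat d$, and requires verifying the well-separation hypothesis of property (2) so that the per-cube counts add rather than interact. The remaining steps are close analogues of arguments already carried out for the absolutely continuous case in Proposition~\ref{prop_AC} and for minimum spanning trees in Lemma~\ref{lemma_dominated_mst}.
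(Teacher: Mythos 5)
Your proposal is correct and follows essentially the same route as the paper's proof: choose $\beta\in(\gamma_i^m,d)$ and a fixed $k$ with $\xi_i^m(k)<k^\beta$, use Lemma~\ref{lemma_lower_1} to produce $\gtrsim c_2 n$ good mesh-$\delta$ cubes, extract a stable $\PH_i$ configuration among the occupied sub-cubes of each, build buffered occupancy indicators on a well-separated positive fraction of the good cubes, and apply Lemma~\ref{lemma_occupancy_2} together with the separation/additivity property to count distinct intervals of length $\gtrsim \epsilon_0 n^{-1/d}$. The only differences are cosmetic (a sub-lattice residue class versus the paper's maximal separated subfamily, and your slightly more careful handling of the $\liminf$ in Definition~\ref{defn_gamma} and of the rescaling factor $k$ in $\epsilon_0$).
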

\begin{proof}
Let $\gamma_i^m<\beta<d.$ By Definition~\ref{defn_gamma}, we can find a $K_0$ so that $k^{\beta}>\xi_i^m\paren{k}$ for all $k>K_0.$ Let $k>\min\paren{K,K_0},$ where $K_0$ is as given in the previous lemma, and let $\delta_1$ and $c_2$ also be as in that lemma. There are only finitely many collections of sub-cubes of $\brac{k}^m,$ so there are only finitely many  stable $\PH_i$ classes of subsets of $\brac{k}^m.$ Let $\epsilon_0$ be the minimum of the sizes of these stable classes.  

Let $\delta=n^{-1/d}$ and choose $n$ large enough so that $\delta<\delta_1.$ We will define a collection of occupancy indicators in terms of subsets of cubes in $\mathcal{C}_\delta^{k,\beta}$ which imply the length of a $\PH_i$ interval of length at least $\epsilon_0 \delta.$ To ensure that the indicators do not interfere with each other, let $\set{D_1,\ldots, D_s}$ be a maximal collection of cubes in $\mathcal{C}_\delta^{k,\beta}$ so that 
\begin{equation}
\label{eqn_lower2_0}
d\paren{D_j,D_l}> \paren{\delta+1} \sqrt{m}
\end{equation}
 for all $j,l\in\set{1,\ldots,s}$ so that $j\neq l.$ See Figure~\ref{fig_cdelta2}. There is a constant $0<\kappa <1$ that depends only on $d$ so that $s\geq \kappa M\paren{\delta,k,\beta}.$ Furthermore, by the previous lemma,
\[s \geq \kappa M\paren{\delta,k,\beta}> \kappa c_2 \delta^{-d}=\kappa c_2 n\,.\]

\begin{figure}
\centering
\includegraphics[width=.7\textwidth]{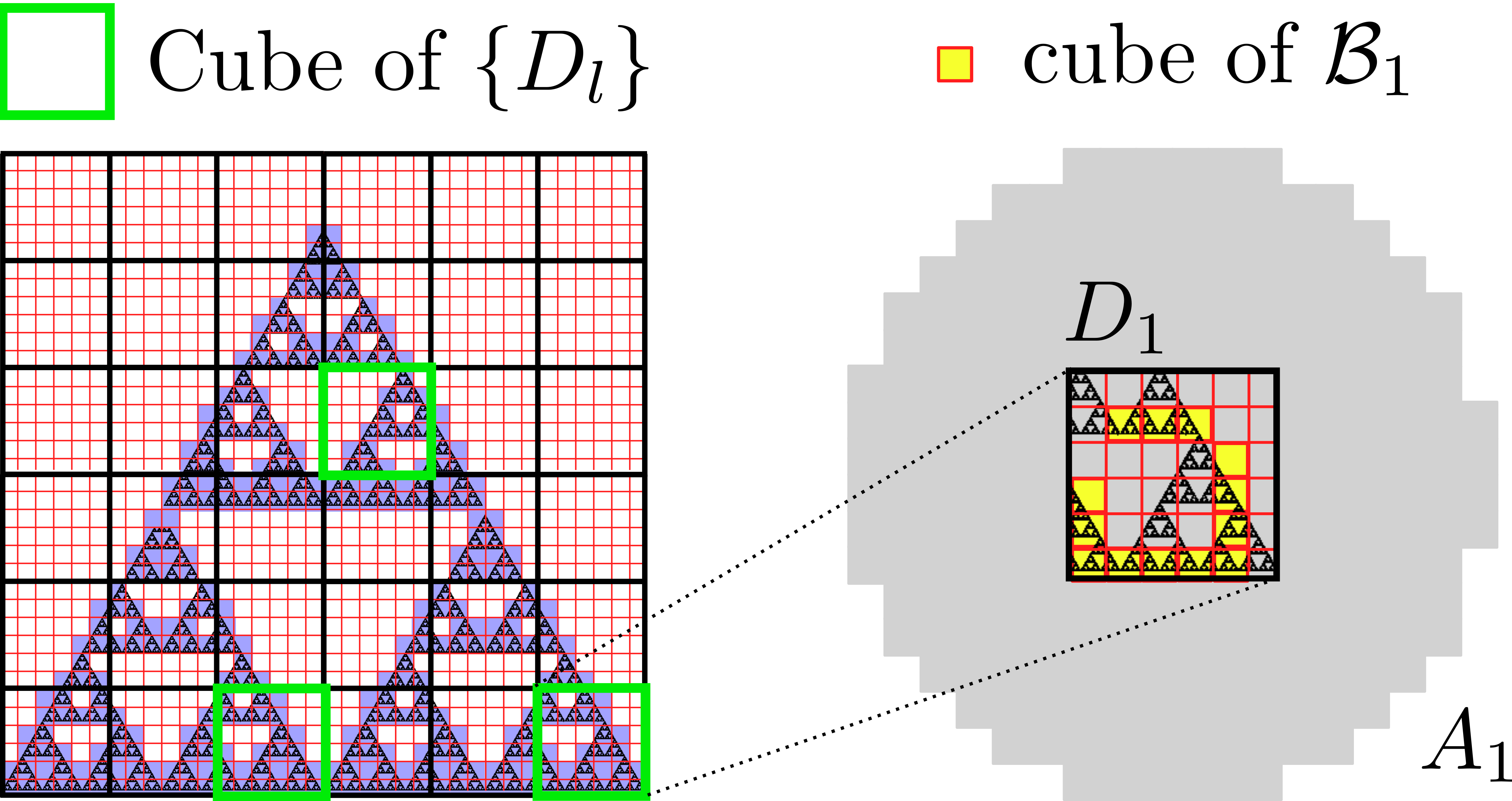}
\caption{The setup in the proof of Lemma~\ref{lemma_lower_2}.}
\label{fig_cdelta2}
\end{figure}

Let $l\in\set{1,\ldots,s}.$ By construction, $\mathcal{D}_{k}\paren{D_l}$ (defined in Equation~\ref{eq:cube2}) contains at least $k^\beta$ sub-cubes. $k>K_0,$ so $k^{\beta}>\xi_i^m\paren{k}$ and there is a collection of sub-cubes $\mathcal{B}_l\subset \mathcal{D}_{k}\paren{D_l}$ with a stable $\PH_i$ class (using Definition~\ref{defn_gamma}). Let
\[A_l=\hat{B}_{\delta\sqrt{m}}\paren{C}\setminus \cup_{B\in\mathcal{B}_l} B\]
where $\hat{B}_{\delta\sqrt{m}}\paren{D_l}$ is the union of all cubes in the grid of mesh $\delta/k$ within distance $\delta\sqrt{m}$ of $D_l$ (see Figure~\ref{fig_cdelta2}). Also, let $\mathcal{B}'_l$ be collection of the interiors of the sub-cubes $\mathcal{B}_l.$ Note that Equation~\ref{eqn_lower2_0} implies that the sets $A_l$ and $\mathcal{B}'_l$ are disjoint for different values of $l.$ It follows from property (2) in Section~\ref{sec_properties} that
\begin{equation*}
p_i\paren{\textbf{x}_n, \epsilon_0 \, n^{-1/d}}\geq \sum_{j=1}^{s}\Xi\paren{\textbf{x}_n,A_l,\mathcal{B}'_l}\,.
\end{equation*}

$A_l$ is contained in a ball of radius $\delta\sqrt{m}+\delta$ so if $q=c\paren{\sqrt{m}+1}^d$  then, by Ahlfors regularity,
\[\mu\paren{A_l} \leq c\delta^d{\sqrt{m}+1}^d=\frac{q}{n}\]
for all $l\in\set{1,\ldots,s}.$ Also, each $B\in \mathcal{B}_l$ is an cube of width $\delta/k$ in $\R^m$ so
\[\mu\paren{B}\geq \frac{1}{c}\paren{\frac{\delta\sqrt{m}}{2 k}}^d=\frac{p}{n}\,,\]
where $p=2^{-d}k^{-d}m^{d/2}/c.$ Therefore, $\Xi\paren{\textbf{x}_n,A_l,\mathcal{B}'_l}$ is a $n,p,q,k^m$-bounded occupancy indicator for each $l$, and the desired result follows from Lemma~\ref{lemma_occupancy_2}.
\end{proof}

The proof of the lower bound in Theorems~\ref{thm_extremal} and~\ref{thm_probabalistic} is now straightforward.
\begin{Proposition}
\label{prop_lower}
Let $\mu$ be a $d$-Ahlfors regular measure on $\R^m$ with $d>\gamma_i^m.$ Then there is an $\Omega>0$ so that 

\[\lim_{n\rightarrow\infty} n^{-\frac{d-\alpha}{d}} E_\alpha^i\paren{x_1,\ldots,x_n}\geq  \Omega \]
with high probability.
\end{Proposition}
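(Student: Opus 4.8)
The plan is to deduce the proposition directly from Lemma~\ref{lemma_lower_2}, exactly as Proposition~\ref{prop_lower_MST} followed from Lemma~\ref{lemma_dominated_mst} in the minimum spanning tree case. The key observation is that every persistent homology interval of length greater than $\epsilon_0\,n^{-1/d}$ contributes at least $\paren{\epsilon_0\,n^{-1/d}}^\alpha$ to the $\alpha$-weighted sum $E_\alpha^i\paren{\textbf{x}_n}.$ Discarding all shorter intervals therefore gives the deterministic bound
\[E_\alpha^i\paren{\textbf{x}_n}\geq \paren{\epsilon_0\,n^{-1/d}}^\alpha\,p_i\paren{\textbf{x}_n,\epsilon_0\,n^{-1/d}}\,.\]

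First I would invoke Lemma~\ref{lemma_lower_2} to obtain positive constants $\epsilon_0$ and $\Omega_1$ so that $p_i\paren{\textbf{x}_n,\epsilon_0\,n^{-1/d}}\geq \Omega_1 n$ with high probability as $n\rightarrow\infty.$ Substituting this into the inequality above and multiplying through by $n^{-\frac{d-\alpha}{d}}$ yields
\[n^{-\frac{d-\alpha}{d}}E_\alpha^i\paren{\textbf{x}_n}\geq \Omega_1\,\epsilon_0^\alpha\, n^{-\frac{d-\alpha}{d}}\,n^{-\alpha/d}\,n\]
with high probability. The exponent on $n$ collapses, since $-\frac{d-\alpha}{d}-\frac{\alpha}{d}+1 = -1+1 = 0,$ so the right-hand side equals the constant $\Omega\coloneqq \Omega_1\,\epsilon_0^\alpha,$ which does not depend on $n.$ Passing to the limit then proves the claim.

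Since all of the geometric and probabilistic difficulty has been absorbed into Lemma~\ref{lemma_lower_2} --- which builds the occupancy indicators forcing stable $\PH_i$ classes using the non-triviality constants $\gamma_i^m$ of Definition~\ref{defn_gamma} together with the cube-counting estimate of Lemma~\ref{lemma_lower_1} --- the step carried out here is purely a bookkeeping computation. The only point requiring a word of care is that the high-probability event supplied by Lemma~\ref{lemma_lower_2} is precisely the event on which the final inequality holds: the lower bound on $E_\alpha^i$ is a pointwise consequence of the lower bound on $p_i$ on each realization, so no additional union bound or intersection of events is needed. The genuine obstacle, then, lies not in this proposition but in the construction underlying Lemma~\ref{lemma_lower_2}.
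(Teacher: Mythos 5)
Your proposal is correct and follows essentially the same route as the paper: the paper's proof of Proposition~\ref{prop_lower} likewise discards intervals shorter than $\epsilon_0\,n^{-1/d}$, applies Lemma~\ref{lemma_lower_2} to get $p_i\paren{\textbf{x}_n,\epsilon_0\,n^{-1/d}}\geq \Omega_1 n$ with high probability, and observes that the powers of $n$ cancel to yield $\Omega=\epsilon_0^\alpha\,\Omega_1$. Your remark that no additional union bound is needed, since the bound on $E_\alpha^i$ holds pointwise on the event supplied by Lemma~\ref{lemma_lower_2}, is accurate and matches the paper's treatment.
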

\begin{proof}
Let $\epsilon_0$ be as in the previous lemma. By counting intervals of length greater than $\epsilon_0 n^{-1/d},$ we have that
\begin{align*}
\lim_{n\rightarrow\infty}  n^{-\frac{d-\alpha}{d}} E_\alpha^i\paren{\textbf{x}_n}\geq& \;\; \lim_{n\rightarrow\infty}   n^{-\frac{d-\alpha}{d}} p_i\paren{\textbf{x}_n,\epsilon_0 n^{-1/d}}\paren{\epsilon_0 n^{-1/d}}^\alpha\\
=& \;\; \epsilon_0^\alpha \lim_{n\rightarrow\infty}   \frac{1}{n}  p_i\paren{\textbf{x}_n,\epsilon_0 n^{-1/d}}\\
\geq & \;\; \epsilon_0^\alpha \Omega_1 &&\text{by Lemma~\ref{lemma_lower_2}}\\
\coloneqq & \;\; \Omega
\end{align*}
with high probability as $n\rightarrow\infty.$ 
\end{proof}

\section{Acknowledgments}
We would like to thank H. Adams and M. Kahle for interesting discussions. In particular, the computational experiments of Adams et al.~\cite{2019adams} helped to inspire this work. We would also like to thank J. Jaquette and M. Kahle for their helpful comments. Moreover, we are grateful to the anonymous referee for their valuable comments, which have led to an improved manuscript.

Funding for this research was provided by a NSF Mathematical Sciences Postdoctoral Research Fellowship under award number DMS-1606259.

\nocite{2017hiraoka}
\bibliographystyle{plain}

\bibliography{Bibliography}

\appendix

\section{Construction of Counterexample~\ref{prop:noLimit}}
\label{sec:noLimit}

We will construct a $d$-Ahlfors regular measure $\sigma$ with $d=\frac{\log\paren{2}}{\log\paren{3}}$ so that if $\set{z_j}_{j\in\mathbb{N}}$ are i.i.d. samples from $\mu$ and $0<\alpha<d$ then the quantity
\[n^{-\frac{d-\alpha}{d}}E_0^\alpha\paren{z_1,\ldots,z_n}\]
oscillates with high probability as $n\rightarrow\infty.$ Our example will be constructed as the intersection of a nested sequence of closed subsets $Y_1\supset Y_2 \supset Y_3 \ldots$ of $\brac{0,1},$ where each $Y_j$ is the union of finitely many congruent, disjoint intervals. At some scales the set will resemble the Cantor set, while at others it will resemble the Cantor set scaled by a factor $\frac{5}{7}.$ As described at the end of this section, the construction can easily be modified to produce a counterexample of dimension $d$ for any $d\in\paren{0,1}.$

We introduce notation and shorthand related to sets of intervals.  Call a finite set of disjoint intervals $\mathcal{I}$ an ``interval collection.'' We will abuse notation, and use $\mathcal{I}$ to refer to both the collection $\mathcal{I}$ and the union $\cup_{I\in\mathcal{I}}I.$  Let $\abs{\mathcal{I}}$ be the number of intervals in the collection, and $\norm{\mathcal{I}}$ be the minimum length of an interval.

Before proving Counterexample~\ref{prop:noLimit}, we prove three technical lemmas. The first one shows that if two point sets have the same ``interval membership'' in a fine enough interval collection, then the length of the corresponding minimum spanning trees is close. If $\mathcal{I}=\set{I_j}_{j=1}^k$ and $\set{x_1,\ldots,x_n}\subset \mathcal{I}$ let  $\phi_{\mathcal{I}}\paren{x_1,\ldots,x_n}$ record the interval membership of the points $x_1,\ldots,x_n:$
\[\phi_{\mathcal{I}}\paren{x_1,\ldots,x_n}=\paren{l_1,\ldots,l_n}\text{ if } x_1\in I_{l_1},\ldots, x_n\in I_{l_n}\,.\]

\begin{Lemma}
Let $n\in \N,\epsilon_0>0,$ and $\alpha>0.$ There exists a $\delta>0$ so that if $\mathcal{I}$ is an interval collection with $\norm{\mathcal{I}}<\delta$ and $\set{x_1,\ldots,x_n}, \set{y_1,\ldots,y_n}\subset \mathcal{I}$ satisfy 
\[\phi_{\mathcal{I}}\paren{x_1,\ldots,x_n}=\phi_{\mathcal{I}}\paren{y_1,\ldots,y_n}\,\]
then 
\[\abs{E_\alpha^0\paren{x_1,\ldots,x_n}-E_\alpha^0\paren{y_1,\ldots,y_n}}<\epsilon_0\,.\]
\end{Lemma}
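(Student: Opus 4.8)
The plan is to reduce the statement to a pointwise stability estimate for the $\alpha$-weighted minimum spanning tree. The hypothesis $\phi_{\mathcal{I}}\paren{x_1,\ldots,x_n}=\phi_{\mathcal{I}}\paren{y_1,\ldots,y_n}$ says exactly that $x_j$ and $y_j$ lie in a common interval of $\mathcal{I}$ for each $j.$ Since $\norm{\mathcal{I}}<\delta$ forces that common interval to have length less than $\delta,$ we get $\abs{x_j-y_j}<\delta$ for every $j.$ It therefore suffices to prove that, for a suitable $\delta$ depending only on $n,$ $\epsilon_0,$ and $\alpha,$ displacing each of the $n$ points by less than $\delta$ changes $E_\alpha^0$ by less than $\epsilon_0.$ The crucial feature I want from this $\delta$ is that it is \emph{independent} of $\mathcal{I},$ which is what the lemma asserts.

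The main tool I would use is a representation of $E_\alpha^0$ as a minimum over spanning trees. Because $t\mapsto t^\alpha$ is strictly increasing for $\alpha>0,$ the ordering of the edge lengths $\abs{e}$ coincides with the ordering of the weights $\abs{e}^\alpha,$ so Kruskal's algorithm returns the same tree in either case; hence the minimum spanning tree $T\paren{x_1,\ldots,x_n}$ also minimizes $\sum_{e\in T}\abs{e}^\alpha$ among all spanning trees $T$ of the complete graph on the labeled vertex set $\set{1,\ldots,n}.$ Consequently
\[E_\alpha^0\paren{x_1,\ldots,x_n}=\min_{T}\sum_{\paren{i,j}\in T}\abs{x_i-x_j}^\alpha\,,\]
and likewise for the $y_j.$ This $\min_T$ form is what lets me compare the two weighted trees even when the combinatorial type of the optimal tree jumps.

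Finally I would run an exchange argument. For each pair $i,j$ the triangle inequality gives $\big|\abs{x_i-x_j}-\abs{y_i-y_j}\big|\leq\abs{x_i-y_i}+\abs{x_j-y_j}<2\delta,$ and for $0<\alpha\leq 1$ the subadditivity of $t\mapsto t^\alpha$ yields $\big|\abs{x_i-x_j}^\alpha-\abs{y_i-y_j}^\alpha\big|<\paren{2\delta}^\alpha.$ Letting $T_x$ be an optimal tree for the $x_j,$ I would estimate
\begin{align*}
E_\alpha^0\paren{y_1,\ldots,y_n}&\leq \sum_{\paren{i,j}\in T_x}\abs{y_i-y_j}^\alpha\\
&< \sum_{\paren{i,j}\in T_x}\abs{x_i-x_j}^\alpha+\paren{n-1}\paren{2\delta}^\alpha\\
&=E_\alpha^0\paren{x_1,\ldots,x_n}+\paren{n-1}\paren{2\delta}^\alpha\,,
\end{align*}
and the reverse inequality follows by swapping the roles of $x$ and $y.$ Thus $\abs{E_\alpha^0\paren{x_1,\ldots,x_n}-E_\alpha^0\paren{y_1,\ldots,y_n}}<\paren{n-1}\paren{2\delta}^\alpha,$ and choosing $\delta<\frac{1}{2}\paren{\epsilon_0/\paren{n-1}}^{1/\alpha}$ finishes the proof with $\delta$ depending only on $n,$ $\epsilon_0,$ and $\alpha.$ I expect the only real obstacle to be obtaining a bound \emph{uniform} in $\mathcal{I}$: in the relevant regime $\alpha\leq 1$ the subadditivity of $t^\alpha$ gives this for free, whereas for $\alpha>1$ one would instead invoke that $t\mapsto t^\alpha$ is Lipschitz on the bounded range of edge lengths, picking up a constant that depends on the diameter of $\mathcal{I}.$
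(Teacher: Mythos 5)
Your proof is correct, and its core estimate is the same as the paper's: each point moves by less than $\delta$, so each relevant edge weight changes by less than $(2\delta)^\alpha$ by the $\alpha$-H\"older/subadditivity property of $t\mapsto t^\alpha$, and there are $n-1$ edges, so $\delta\sim(\epsilon_0/n)^{1/\alpha}$ suffices. Where you differ is in how the two minimum spanning trees are compared. The paper exploits the one-dimensional setting: on $\R$ the MST is always the path through the sorted points, so after sorting it writes $E_\alpha^0$ as $\sum_{i=1}^{n-1}(x_{i+1}-x_i)^\alpha$ and compares term by term, using (implicitly) that equal interval membership forces the $i$-th order statistics of the two samples to lie in a common interval. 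You instead use the variational characterization $E_\alpha^0=\min_T\sum_{(i,j)\in T}|x_i-x_j|^\alpha$ (valid because Kruskal's tree is determined by the edge order, which is preserved under the increasing map $t\mapsto t^\alpha$) together with an exchange argument; this sidesteps any bookkeeping about whether the optimal tree changes combinatorial type and would work verbatim for point sets in an arbitrary metric space, at the cost of being slightly less elementary. One shared caveat: both arguments as written really require $0<\alpha\leq 1$ (the paper's claim that $t\mapsto t^\alpha$ is $\alpha$-H\"older is only meaningful there), which is the regime actually used in the counterexample since $\alpha<d<1$; your closing remark about handling $\alpha>1$ via a Lipschitz bound on the bounded range of edge lengths is the right fix and is in fact more careful than the paper on this point.
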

\begin{proof}
The function $x\rightarrow x^\alpha$ is $\alpha$-Holder continuous on $\brac{0,1}$ so there exists a $C>0$ so that 
\begin{equation}
\label{eqn_nolimit0_1}
\abs{x^\alpha-y^\alpha}<C\abs{x-y}^\alpha
\end{equation}
for all $x,y\in\brac{0,1}.$ Let 
\begin{equation}
\label{eqn_nolimit0_2}
\delta=\frac{1}{2}\paren{\frac{\epsilon_0}{Cn}}^{1/\alpha}\,.
\end{equation}

If $\mathcal{I},\set{x_1,\ldots,x_n},$ and $\set{y_1,\ldots,y_n}$ satisfy the hypotheses then $\abs{x_i-y_i}<\delta$ for $i=1,\ldots, n,$ because $x_i$ and $y_i$ are contained in an interval whose length is less than $\delta.$

 It follows that
\begin{align*}
\abs{E_\alpha^0\paren{x_1,\ldots,x_n}-E_\alpha^0\paren{y_1,\ldots,y_n}}=&\;\;\abs{\sum_{i=1}^{n-1}\paren{x_{i+1}-x_i}^\alpha-\paren{y_{i+1}-y_i}^\alpha}\\
\leq & \;\; \sum_{i=1}^{n-1}\abs{\paren{x_{i+1}-x_i}^\alpha-\paren{y_{i+1}-y_i}^\alpha}\\
\leq & \;\; \sum_{i=1}^n C \abs{\paren{x_{i+1}-y_{i+1}}+\paren{x_i-y_i}}^\alpha &&\text{by Eqn.~\ref{eqn_nolimit0_1}}\\
< & \;\; n C 2^\alpha\delta^\alpha &&\text{by $\norm{\mathcal{I}}<\delta$}\\
= & \;\; \epsilon_0  &&\text{by Eqn.~\ref{eqn_nolimit0_2}}\,.
\end{align*}
\end{proof}

The next lemma shows that probability measures supported on a fine enough interval collection which induce the same distribution of interval membership have random minimum spanning trees with similar lengths. If $\mu$ is supported on an interval collection $\mathcal{I},$  let $\mu^n_{\mathcal I}$ be the discrete random variable $\phi_\mathcal{I}\paren{x_1,\ldots,x_n}.$

\begin{Lemma}\label{lemma:noLimit2}
Let $\epsilon>0$ and $0<\alpha<d.$ Let $S_1\supset S_2 \ldots $ be a nested sequence of interval collections with  $\norm{S_j}\rightarrow 0,$ and let $\mu$ be  a probability measure supported on $\cap_{j} S_j$ so that
\begin{equation}
\label{eqn_noLimit2_0}
\abs{n^{-\frac{d-\alpha}{d}} E_\alpha^0\paren{x_1,\ldots,x_n} -c}<\epsilon/2
\end{equation}
with probability greater than $1-\epsilon$ for an integer $n>0.$ Then there exists an $M\paren{\epsilon,n}$ so that if $j>M\paren{\epsilon,n}$ and $\nu$ is any probability measure supported on $S_j$ satisfying $\nu^n_{S_j}=\mu^n_{S_j}$ (that is, $\mu$ and $\nu$ induce the same discrete probability distribution on the intervals in the set $S_j$) then 
\[\abs{c-n^{-\frac{d-\alpha}{d}} E_\alpha^0\paren{y_1,\ldots,y_n}}<\epsilon\]
with probability greater than $1-\epsilon,$ where $\set{y_k}_{k\in\mathbb{N}}$ are i.i.d. points sampled from $\nu.$
\end{Lemma}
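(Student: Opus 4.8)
The plan is to deduce this from the preceding lemma by a coupling argument. That lemma tells us that once an interval collection is fine enough, any two point configurations with identical interval membership have nearly identical minimum spanning tree $\alpha$-weights. The hypothesis $\nu^n_{S_j}=\mu^n_{S_j}$ says that $\mu$ and $\nu$ produce the \emph{same distribution} of interval membership on $S_j$, so we can couple a sample from $\mu$ and a sample from $\nu$ to have identical interval membership almost surely, and thereby transfer the concentration estimate from $\mu$ to $\nu$.

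More concretely, first I would apply the preceding lemma with the fixed integer $n$, the exponent $\alpha$, and tolerance $\epsilon_0=\frac{\epsilon}{2}\,n^{\frac{d-\alpha}{d}}$, obtaining a $\delta>0$ so that any interval collection $\mathcal{I}$ with $\norm{\mathcal{I}}<\delta$ and any two samples with equal $\phi_\mathcal{I}$ satisfy $\abs{E_\alpha^0\paren{x_1,\ldots,x_n}-E_\alpha^0\paren{y_1,\ldots,y_n}}<\frac{\epsilon}{2}\,n^{\frac{d-\alpha}{d}}$, equivalently
\[n^{-\frac{d-\alpha}{d}}\abs{E_\alpha^0\paren{x_1,\ldots,x_n}-E_\alpha^0\paren{y_1,\ldots,y_n}}<\frac{\epsilon}{2}\,.\]
Since $\norm{S_j}\to 0$, I would then choose $M\paren{\epsilon,n}$ large enough that $\norm{S_j}<\delta$ for all $j>M\paren{\epsilon,n}$; this choice depends only on $\epsilon$ and $n$, as required.

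Now fix such a $j$ and a measure $\nu$ on $S_j$ with $\nu^n_{S_j}=\mu^n_{S_j}$. The key step is the coupling. Because $\phi_{S_j}\paren{x_1,\ldots,x_n}$ is a vector of i.i.d.\ labels, the equality $\nu^n_{S_j}=\mu^n_{S_j}$ is equivalent to $\mu\paren{I}=\nu\paren{I}$ for every interval $I\in S_j$; in particular $\mu\paren{I}>0$ iff $\nu\paren{I}>0$. I would build a joint law on $\paren{x_1,\ldots,x_n,y_1,\ldots,y_n}$ by first drawing a label vector $\paren{l_1,\ldots,l_n}$ from the common distribution $\mu^n_{S_j}$, then drawing each $x_i$ from $\mu$ conditioned on the interval $I_{l_i}$ and each $y_i$ from $\nu$ conditioned on $I_{l_i}$, all independently. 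Conditioning is always on positive-measure sets, the marginals are correct ($x_i\sim\mu$ and $y_i\sim\nu$), $x_i\in\cap_j S_j\subseteq S_j$ and $y_i\in S_j$, and by construction $\phi_{S_j}\paren{x_1,\ldots,x_n}=\phi_{S_j}\paren{y_1,\ldots,y_n}$ almost surely. On the event where $\abs{n^{-\frac{d-\alpha}{d}}E_\alpha^0\paren{x_1,\ldots,x_n}-c}<\epsilon/2$ — which has probability greater than $1-\epsilon$ by hypothesis — the preceding lemma together with the triangle inequality yields $\abs{n^{-\frac{d-\alpha}{d}}E_\alpha^0\paren{y_1,\ldots,y_n}-c}<\epsilon$. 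Since the $y$-marginal of the coupling is exactly the law of an i.i.d.\ sample from $\nu$, this event has the same probability, which finishes the argument.

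The main obstacle I anticipate is making the coupling rigorous: justifying that $\nu^n_{S_j}=\mu^n_{S_j}$ collapses to equality of the one-point interval masses, and handling the conditioning so that we never condition on a null set and so that both conditional samples land in the same interval. Everything else is a bookkeeping combination of the preceding lemma with the triangle inequality, with the fixed factor $n^{\frac{d-\alpha}{d}}$ absorbed into the tolerance $\epsilon_0$ fed to that lemma.
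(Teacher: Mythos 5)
Your proposal is correct and follows essentially the same route as the paper's proof: the same application of the preceding lemma with tolerance $\epsilon_0=n^{(d-\alpha)/d}\epsilon/2$, the same choice of $M\paren{\epsilon,n}$ via $\norm{S_j}<\delta$, and the same use of the shared label distribution on $S_j$ to transfer the concentration estimate from $\mu$ to $\nu$. The only divergence is in how that transfer is executed: the paper pushes the good event forward through $\phi_{S_j}$ to a set $V$ of label vectors and then extracts a witness configuration $z$ with the same labels as the $\nu$-sample, whereas you build an explicit coupling in which the coupled $\mu$-sample plays the role of that witness --- both are valid, and your version makes the measurability bookkeeping slightly more transparent.
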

\begin{proof}
For convenience, define
\[F\paren{x_1,\ldots,x_n}=\abs{c-n^{-\frac{d-\alpha}{d}} E_\alpha^0\paren{x_1,\ldots,x_n}}\,.\]
Let  $\delta>0$ be as given in by the previous lemma for $\epsilon_0=n^{\frac{d-\alpha}{d}}\epsilon/2.$ Choose $M\paren{\epsilon,n}$ sufficiently large so that $\norm{S_j}<\delta$ for $j>M\paren{\epsilon,n}.$

Let $j>M\paren{\epsilon,n}$ and define
\[V=\phi_{S_j}\paren{F^{-1}\paren{\epsilon/2}}\,.\]
That is,  $\paren{l_1,\ldots,l_n}\in V$ if there exist points $x_1\in I_{l_1},\ldots,x_n\in I_{l_n}$ so that $F\paren{x_1,\ldots,x_n}<\epsilon/2$ (where $S_j=\set{I_s})$. By Equation~\ref{eqn_noLimit2_0}
\[\mu^n_{S_j}\paren{V}>1-\epsilon\]
and, because the discrete random variables $\mu^n_{S_j}$ and $\nu^n_{S_j}$ coincide by hypothesis,
\[\nu^n_{S_j}>1-\epsilon\,.\]
Therefore, with probability greater than $1-\epsilon,$ $\phi_{\mathcal{I}}\paren{y_1,\ldots,y_n}\in V$ so there exist $z_1,\ldots,z_n$ satisfying $F\paren{z_1,\ldots,z_n}<\epsilon/2$ and $\phi_{\mathcal{I}}\paren{y_1,\ldots,y_n}=\phi_{\mathcal{I}}\paren{z_1,\ldots,z_n}.$ $\paren{z_1,\ldots,z_n}$ and $\paren{y_1,\ldots,y_n}$ satisfy the hypotheses of the previous lemma, so
\begin{align*}
F\paren{E_\alpha^0\paren{y_1,\ldots,y_n}}\leq & \;\; \abs{n^{-\frac{d-\alpha}{d}}\paren{ E_\alpha^0\paren{z_1,\ldots,z_n}- E_\alpha^0\paren{y_1,\ldots,y_n}}}+F\paren{z_1,\ldots,z_n}\\
\leq &\;\; n^{-\frac{d-\alpha}{d}}\epsilon_0+\epsilon/2\\
= &\;\; \epsilon\,.
\end{align*}
\end{proof}

The third lemma compares the behavior of random minimum spanning trees on an interval collection with that of one on another interval collection formed by translating its intervals. A natural map between two interval collections $\mathcal{I}$ and $\mathcal{J}$ is an order-preserving homeomorphism $f:\mathcal{I}\rightarrow\mathcal{J}$ so that, for any $I\in\mathcal{I},$ $f\mid_I $ is a translation and $f\paren{I}$ is an interval in $\mathcal{J}.$ Note that if $\mathcal{I}$ and $\mathcal{J}$ are sets of disjoint, congruent intervals so that $\abs{\mathcal{I}}=\abs{\mathcal{J}}$ and $\norm{\mathcal{I}}=\norm{\mathcal{J}},$ then there is a unique natural map between them.

\begin{Lemma}\label{lemma:noLimit1} Let $\mathcal{I}$ and $\mathcal{J}$ be interval collections contained in $\brac{0,1}$, and suppose that there is a natural map $f:\mathcal{I}\rightarrow\mathcal{J}.$ Let $0<\alpha<d,$ and let $\mu$ be a probability measure supported on $\mathcal{I}$ so that
\[n^{-\frac{d-\alpha}{d}} E_\alpha^0\paren{\textbf{x}_n}\rightarrow c\]
in probability as $n\rightarrow\infty$, for some real number $c.$ Then 
\[n^{-\frac{d-\alpha}{d}} E_\alpha^0\paren{f\paren{x_1},\ldots,f\paren{x_n}}\rightarrow c\]
in probability as $n\rightarrow\infty.$
\end{Lemma}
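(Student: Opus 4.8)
The plan is to reduce everything to the one-dimensional structure of the minimum spanning tree. For a finite set of points on the line, the minimum spanning tree is the path joining consecutive points in sorted order, so if $x_{(1)}\le\cdots\le x_{(n)}$ denotes the sorted sample then $E_\alpha^0\paren{x_1,\ldots,x_n}=\sum_{i=1}^{n-1}\paren{x_{(i+1)}-x_{(i)}}^\alpha$ (this is the same description already used implicitly in the first lemma of this appendix). Since a natural map $f$ is by definition order-preserving, the sorted order of $f\paren{x_1},\ldots,f\paren{x_n}$ is exactly $f\paren{x_{(1)}},\ldots,f\paren{x_{(n)}}$, and hence $E_\alpha^0\paren{f\paren{x_1},\ldots,f\paren{x_n}}=\sum_{i=1}^{n-1}\paren{f\paren{x_{(i+1)}}-f\paren{x_{(i)}}}^\alpha$. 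So the two weighted sums are compared term by term over the same $n-1$ consecutive gaps.

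Next I would split the gaps into two types. If $x_{(i)}$ and $x_{(i+1)}$ lie in the same interval $I\in\mathcal{I}$, then $f\mid_I$ is a translation, so $f\paren{x_{(i+1)}}-f\paren{x_{(i)}}=x_{(i+1)}-x_{(i)}$ and the corresponding term is unchanged. The only terms that can differ are the ``cross-interval'' gaps, where $x_{(i)}$ and $x_{(i+1)}$ lie in distinct intervals of $\mathcal{I}$. Because $f$ is order-preserving and carries each interval of $\mathcal{I}$ to an interval of $\mathcal{J}$ in the same order, the sorted sample visits the occupied intervals in consecutive blocks; the number of cross-interval gaps is therefore (number of occupied intervals)$\,-1\le \abs{\mathcal{I}}-1$, a constant independent of $n$ and of the sample.

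I would then bound the discrepancy crudely. All points lie in $\brac{0,1}$ and $\mathcal{J}\subset\brac{0,1}$, so every gap lies in $\brac{0,1}$ and its $\alpha$-power lies in $\brac{0,1}$; thus each cross-interval term differs by at most $1$ in absolute value. Summing over at most $\abs{\mathcal{I}}-1$ such terms gives the deterministic, sample-independent bound
\[\abs{E_\alpha^0\paren{x_1,\ldots,x_n}-E_\alpha^0\paren{f\paren{x_1},\ldots,f\paren{x_n}}}\le \abs{\mathcal{I}}-1,\]
uniformly in $n$. Set $K=\abs{\mathcal{I}}-1$.

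Finally, write $A_n=n^{-\frac{d-\alpha}{d}}E_\alpha^0\paren{x_1,\ldots,x_n}$ and $B_n=n^{-\frac{d-\alpha}{d}}E_\alpha^0\paren{f\paren{x_1},\ldots,f\paren{x_n}}$. Multiplying the previous bound by the normalization and using $0<\alpha<d$ (so the exponent $\frac{d-\alpha}{d}$ is positive) yields $\abs{A_n-B_n}\le K\,n^{-\frac{d-\alpha}{d}}\to 0$ deterministically. Since $A_n\to c$ in probability by hypothesis, a routine $\epsilon/2$ argument closes the proof: for any $\epsilon>0$ and $n$ large enough that $K\,n^{-\frac{d-\alpha}{d}}<\epsilon/2$, one has $\mathbb{P}\paren{\abs{B_n-c}>\epsilon}\le \mathbb{P}\paren{\abs{A_n-c}>\epsilon/2}\to 0$, so $B_n\to c$ in probability, as desired. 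The one genuine point — and really the crux of the argument — is the observation that the discrepancy between the two trees is concentrated in a number of edges bounded independently of $n$; once that is established, the normalization annihilates the error and convergence in probability transfers for free.
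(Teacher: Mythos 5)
Your proof is correct and follows essentially the same route as the paper: both identify the MST on the line with the path through the sorted sample, observe that within-interval gaps are preserved exactly because $f$ restricts to a translation on each interval, bound the number of cross-interval gaps by a constant depending only on $\abs{\mathcal{I}}$, and then let the normalization $n^{-\frac{d-\alpha}{d}}$ annihilate the bounded discrepancy before transferring convergence in probability via an $\epsilon/2$ argument. The only cosmetic difference is that you compare the two sums gap-by-gap to get the bound $\abs{\mathcal{I}}-1$, whereas the paper splits each tree into within-interval and cross-interval edges separately and settles for the slightly cruder bound $2\abs{\mathcal{I}}$; this changes nothing.
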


\begin{figure}
\centering
\includegraphics[width=.8\textwidth]{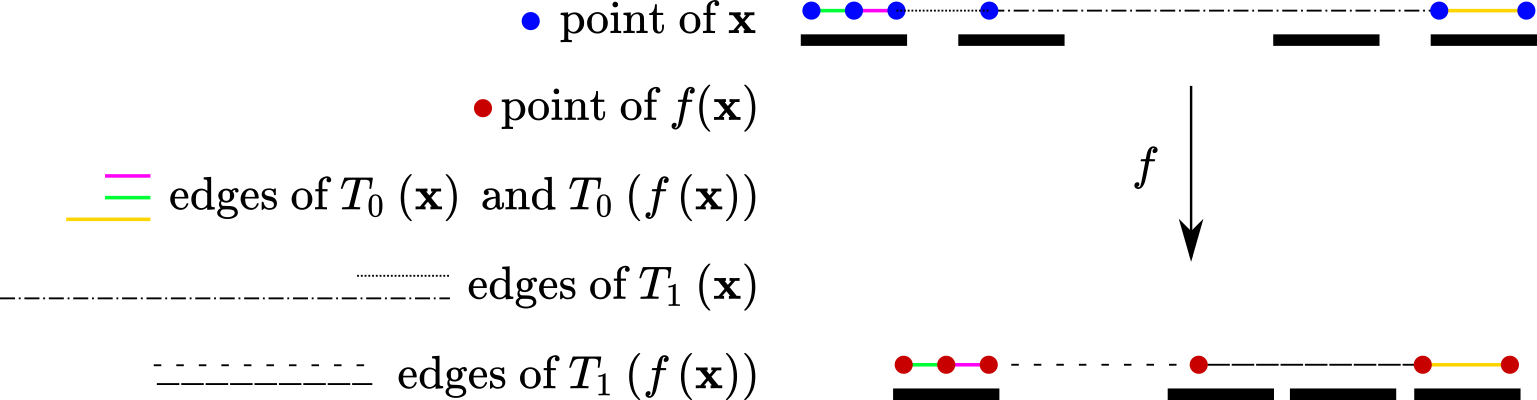}
\caption{The setup in the proof of Lemma~\ref{lemma:noLimit1}.}
\label{fig:lemmaNoLimit1}
\end{figure}

\begin{proof}
First, note that if  $\set{y_1,\ldots,y_n}$ is an ordered set of points in $\mathbb{R},$ the edges of the minimum spanning tree $T\paren{y_1,\ldots,y_n}$ are the intervals $\brac{y_1,y_2}, \ldots \brac{y_{n-1},y_n}.$ For a finite point set $\textbf{x}\subset\mathcal{I}$ let $T_0\paren{\textbf{x}}$ be the set of edges of $T\paren{\textbf{x}}$ that are contained in an interval of $\mathcal{I}$:
 
\[T_0\paren{\textbf{x}}=\set{e\in T\paren{\textbf{x}}:e\subseteq I\text{ for some }I\in\mathcal{I}}\]
and let $T_1\paren{\textbf{x}}$ consist of the remaining edges:
\[T_1\paren{\textbf{x}}=T\paren{\textbf{x}}\setminus T_0\paren{\textbf{x}}\,.\]
See Figure~\ref{fig:lemmaNoLimit1}. Let $k=\abs{\mathcal{I}}$ and note that $\abs{T_1\paren{\textbf{x}}}<k.$

Recall that $\textbf{x}_n$ is shorthand for $\set{x_1,\ldots,x_n}.$ If the edge $\brac{x_{j},x_{j+1}}$ is contained in $T_0\paren{\textbf{x}_n},$ then $\brac{x_{j},x_{j+1}}\subseteq I$ for some $I\in\mathcal{I},$ and by the definition of a natural map there is a $J\in\mathcal{J}$ so that $f\paren{I}=J$ and $f_{\mid I}$ is a translation. Therefore $\brac{f\paren{x_{j}},f\paren{x_{j+1}}}$ is an interval in $T_0\paren{f\paren{\textbf{x}_n}}$ of the same length as $\brac{x_{j},x_{j+1}}.$ It follows that there is a length-preserving bijection between the edges of $T_0\paren{\textbf{x}_n}$ and  $T_0\paren{f\paren{\textbf{x}_n}},$ and
\begin{equation}
\label{eqn_here0}
\sum_{e\in T_0\paren{\textbf{x}_n}}\abs{e}^\alpha=\sum_{e\in T_0\paren{f\paren{\textbf{x}_n}} }\abs{e}^\alpha\,.
\end{equation}

 For $\epsilon>0,$ choose $N$ sufficiently large so that for all $n>N$  
\begin{equation}
\label{eqn_here1}
k n^{-\frac{d-\alpha}{d}} <\epsilon/4
\end{equation}
and
\begin{equation}
\label{eqn_here2}
\abs{n^{-\frac{d-\alpha}{d}} E_\alpha^0\paren{\textbf{x}_n}-c}<\epsilon/2
\end{equation}
with probability greater than $1-\epsilon.$

Let $n>N.$ We have that
\begin{align*}
\abs{E_\alpha^0\paren{f\paren{\textbf{x}_n}}- E_\alpha^0\paren{\textbf{x}_n}} \leq & \;\; \abs{\sum_{e\in T_0\paren{f\paren{\textbf{x}_n}}}\abs{e}^\alpha-\sum_{e\in T_0\paren{\textbf{x}_n}}\abs{e}^\alpha} \\
& +  \sum_{e\in T_1\paren{f\paren{\textbf{x}_n}}}\abs{e}^\alpha+\sum_{e\in T_1\paren{\textbf{x}_n}}\abs{e}^\alpha\\
= & \;\; 0+  \sum_{e\in T_1\paren{f\paren{\textbf{x}_n}}}\abs{e}^\alpha+\sum_{e\in T_1\paren{\textbf{x}_n}}\abs{e}^\alpha &&\text{using Equation~\ref{eqn_here0}}\\
< &\;\; 2k && \text{all edges are contained in $\brac{0,1}$}\\
< &\;\; \epsilon/2 n^{\frac{d-\alpha}{d}} &&\text{using Equation~\ref{eqn_here1}\,.}
\end{align*}

Therefore, 
\begin{align*}
\abs{n^{-\frac{d-\alpha}{d}} E_\alpha^0\paren{f\paren{\textbf{x}_n}}-c} \leq &  \;\; n^{-\frac{d-\alpha}{d}}\abs{ E_\alpha^0\paren{f\paren{\textbf{x}_n}}- E_\alpha^0\paren{\textbf{x}_n}} + \abs{ n^{-\frac{d-\alpha}{d}}E_\alpha^0\paren{\textbf{x}_n}-c} \\ 
 < & \;\; \epsilon/2 + \epsilon/2\\
 = & \;\; \epsilon
\end{align*}
with probability greater than $1-\epsilon,$ where we used the previous computation and Equation~\ref{eqn_here2}.
\end{proof}

We are now ready to construct the counterexample. 

\begin{figure}
\centering
\includegraphics[width=.5\textwidth]{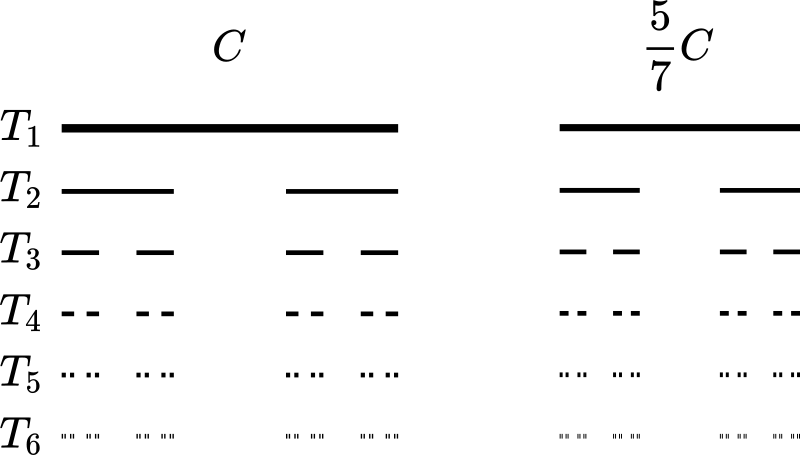}
\caption{The nested sequences of closed sets defining $C$ and $\frac{5}{7}C.$}
\label{fig:Cantor}
\end{figure}

As described in the introduction, our counterexample is related to the Cantor set. The Cantor set can be defined in terms of a middle thirds operation on interval collections. If $I$ is an interval, let $K\paren{I}$ be the set of two intervals of length $1/3\abs{I}$
\[K\paren{I}=\set{1/3 I, 1/3 I + 2/3\abs{I}}\,,\]
 and if $\mathcal{I}$ is an interval collection let 
\[K\paren{\mathcal{I}}=\set{K\paren{I}:I\in \mathcal{I}}\,.\]
 Define $T_m$ to be the set of intervals obtained by applying $K$ to $\set{\brac{0,1}}$ $m-1$ times. $T_m$ consists of $2^{m-1}$ intervals of length $\paren{1/3}^{m-1}.$  Then the Cantor set is
\[C=\cap_{m\in\mathbb{N}}T_m\,.\]
See Figure~\ref{fig:Cantor}.

\begin{figure}
\centering
\includegraphics[width=.4\textwidth]{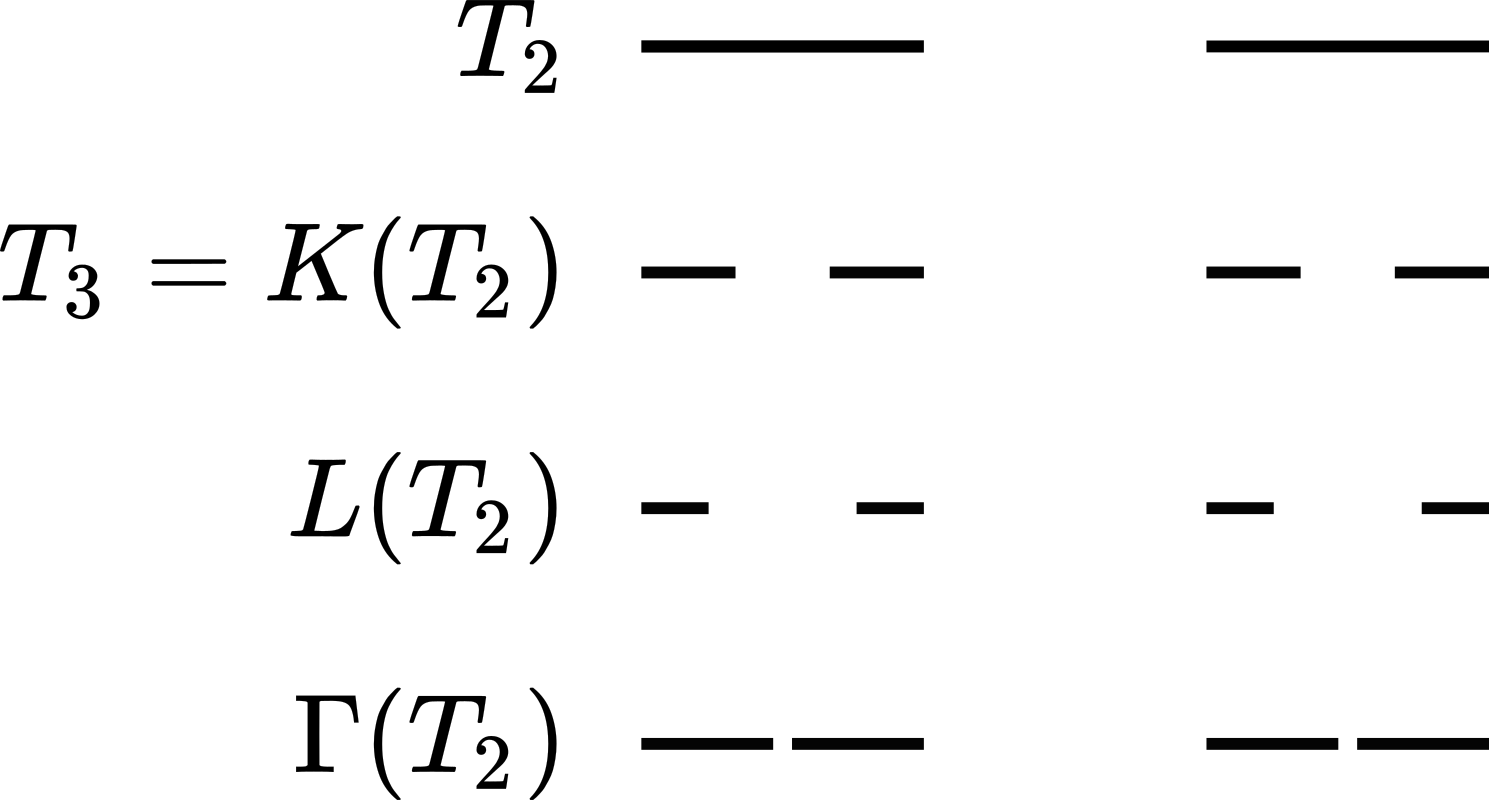}
\caption{The interval operations $L$ and $\Gamma$ applied  $T_2,$ the second interval collection used in the definition of the Cantor set.}
\label{fig:intervalOps}
\end{figure}

We define two more operations on interval collections. One produces slightly thinner intervals than $K$ does, and the other produces slightly thicker intervals. If $I$ is an interval, let 
\[L\paren{I}=\set{\paren{\frac{5}{7}}\paren{\frac{1}{3}}I,\paren{\frac{5}{7}}\paren{\frac{1}{3}}\abs{I}+\paren{1-\paren{\frac{5}{7}}\paren{\frac{1}{3}}}I}\]
and 
\[\Gamma\paren{I}=\set{\paren{\frac{7}{5}}\paren{\frac{1}{3}}I,\paren{\frac{7}{5}}\paren{\frac{1}{3}}\abs{I}+\paren{1-\paren{\frac{7}{5}}\paren{\frac{1}{3}}}I}\,.\]
If $\mathcal{I}$ is an interval collection, define $L\paren{\mathcal{I}}$ and $\Gamma\paren{\mathcal{I}}$ by performing the operation on each interval in the collection. See Figure~\ref{fig:intervalOps}. The scaling factor $\frac{7}{5}$ was chosen so that the intervals of $\Gamma\paren{S_k}$ are disjoint.

If $S_1\supset S_2 \supset S_3\ldots $ is a nested sequence of interval collections, there is natural probability measure $\mu_{S}$ on $\cap_n S_n$ that assigns equal probability to each interval of $S_j,$ for each value of $j.$ That is,
\[\mu_{S}\paren{I}=\frac{1}{\abs{S_j}}\text{ for }I\in S_j\,.\] 
For example, the natural measure on the Cantor set is $\mu_T,$ where $T_1\supset T_2 \supset T_3\ldots $ is the nested sequence of interval collections defined above (and depicted in Figure~\ref{fig:Cantor}). It assigns probability $1/2$ to each interval in $T_2,$ probability $1/4$ to each interval in $T_3,$ and so on.

\begin{proof}[Proof of Counterexample~\ref{prop:noLimit}] Set $d=\frac{\log\paren{2}}{\log\paren{3}}$ and choose $0<\alpha<d.$ Also, let $\mu$ and $\nu$ be the natural probability measures on $C$ and $\frac{5}{7}C,$ where $C$ is the Cantor set. Let $\set{x_j}_{j\in\N}$ and $\set{y_j}_{j\in\N}$ be i.i.d. samples from $\mu$ and $\nu,$ respectively.  Assume that there is a real number $c$ so that
\begin{equation}
\label{eqn_proofcounter0}
n^{-\frac{d-\alpha}{d}}E_\alpha^0\paren{x_1,\ldots,x_n}\rightarrow c
\end{equation}
in probability as $n\rightarrow \infty.$ If this was false, $C$ would be our desired example. Theorem~\ref{thm_mst} implies that $c>0.$ By rescaling the Cantor set, we have that  
\[n^{-\frac{d-\alpha}{d}}E_\alpha^0\paren{y_1,\ldots,y_n}\rightarrow \paren{\frac{5}{7}}^\alpha c \]
in probability as $n\rightarrow \infty.$

We will construct a nested sequence of interval collections $S_1\supset S_2 \supset S_3 \ldots $ mostly by applying the middle thirds operation $K$, but infrequently and alternately applying the operations $L$ and $\Gamma.$

Let $\set{\epsilon_i}$ be a sequence of real numbers converging to zero. We proceed by induction. Let $n_1$ be large enough so 
\[\abs{c-n_1^{-\frac{d-\alpha}{d}} E_\alpha^0\paren{x_1,\ldots,x_{n_1}}}<\epsilon_1/2\]
with probability greater than $1-\epsilon_1,$ choose $m_1> M\paren{\epsilon_1,n_1},$ where $M\paren{\epsilon_1,n_1}$ is as given in Lemma~\ref{lemma:noLimit2}. Let $S_1=T_{m_1},$ where $T_{m_1}$ is the $m_1$-th interval collection from the construction of the Cantor set. Then, by the definition of $M\paren{\epsilon_1,n_1},$ if $\sigma$ is any probability measure $\sigma$ satisfying
\[\sigma\paren{I}=\frac{1}{\abs{S_1}}\text{ for $I\in S_1$}\]
(that is, $\sigma$ assigns the same probabilities to intervals in $S_1$ as $\mu$ does) then
\[\abs{c-n_{1}^{-\frac{d-\alpha}{d}} E_\alpha^0\paren{x_1,\ldots,x_{n_{1}}}}<\epsilon_{1}\]
with probability greater than $1-\epsilon_{1}.$ (Note that we are indexing the sets $S_j$ differently than described in the introduction but the resulting example is the same.)

By way of induction suppose that there are integers $n_1,\ldots, n_{k-1}$ and a nested sequence of interval collections $S_1 \supset \ldots \supset S_{k-1}$ so that
\begin{itemize}
\item For odd $i=1,\ldots,k-1,$ $S_{i}$ consists of $2^{b_i}$ disjoint intervals of length $\paren{\frac{1}{3}}^{b_i}$ for some integer $b_i.$
\item For even $i=2,\ldots,k-1,$ $S_{i}$ consists of $2^{b_i}$ disjoint intervals of length $\frac{5}{7}\paren{\frac{1}{3}}^{b_i}$ for some integer $b_i.$ 
\end{itemize}
and, furthermore, if $\sigma$ is any probability measure on $S_{k-1}$ satisfying
\[\sigma\paren{I}=\frac{1}{\abs{S_j}}\text{ for $I\in S_j,$ $j=1,\ldots,k-1$}\]
then if $\set{z_j}_{j\in\mathbb{N}}$ are i.i.d. samples from $\sigma,$
\begin{itemize} 
\item for odd $i=1,\ldots,k-1,$ 
\[\abs{c-n_{i}^{-\frac{d-\alpha}{d}} E_\alpha^0\paren{x_1,\ldots,x_{n_{i}}}}<\epsilon_{i}\]
 with probability greater than $1-\epsilon_{i}.$
\item for even $i=2,\ldots,k-1,$ 
\[\abs{\paren{\frac{5}{7}}^\alpha c -n_{i}^{-\frac{d-\alpha}{d}} E_\alpha^0\paren{x_1,\ldots,x_{n_{i}}}}<\epsilon_{i}\]
 with probability greater than $1-\epsilon_{i}.$
\end{itemize}

If $k$ is odd, let $\mathcal{I}_k=\Gamma\paren{S_{k-1}},$ so $\mathcal{I}_k$ consists of $2^{b_{k-1}+1}$ disjoint intervals of length $\paren{\frac{1}{3}}^{b_{k-1}+1}.$ It follows that there is a natural map $f_k:T_{b_{k-1}+1}\rightarrow \mathcal{I}_k,$ where $T_{b_{k-1}+1}$ is the $\paren{b_{k-1}+1}$-st interval collection in the construction of the Cantor set.  Let $\mu_k$ be the pushforward of $\mu$ by $f_k$ (recall that $\mu$ is the natural measure on the Cantor set)  and let $\set{w_j}_{j\in\mathbb{N}}$ be i.i.d. samples from $\mu_{k}.$ $f_k$ is a natural map, so Lemma~\ref{lemma:noLimit1} and Equation~\ref{eqn_proofcounter0} imply that
\[n^{-\frac{d-\alpha}{d}} E_\alpha^0\paren{w_1\ldots w_n}\rightarrow c\]
in probability as $n\rightarrow \infty.$ It follows that there exists an $n_k$ so that
\begin{equation}
\label{eqn_construction1}
\abs{n_k^{-\frac{d-\alpha}{d}}E_\alpha^0\paren{w_1,\ldots, w_{n_k}}-c}<\epsilon_k/2\,,
\end{equation}
with probability greater than $1-\epsilon_k.$ $\mu_k$ is supported on intersection of the nested interval collections 
\[\mathcal{I}_k = f_k\paren{T_{b_{k-1}+1}}  \supset f_k\paren{T_{b_{k-1}+2}}\supset f_k\paren{T_{b_{k-1}+3}}\ldots\]
 and $\norm{ f_k\paren{T_{b_{k-1}+j}}}\rightarrow 0$ as $j\rightarrow \infty$ so the hypotheses of Lemma~\ref{lemma:noLimit2} are met; choose $j>M\paren{\epsilon_{k},n_{k}},$ where $M\paren{\epsilon_{k},n_{k}}$ is as defined in that Lemma and set $S_k=f_k\paren{T_{b_{k-1}+j}}.$ Then, by the definition of $M\paren{\epsilon_k,n_k},$ if $\sigma$ is any probability measure so that
\[\sigma\paren{I}=\frac{1}{\abs{S_{k}}}\text{ for $I\in S_{k}$}\]
(that is, $\sigma$ assigns the same probabilities to intervals in $S_{k}$ as $\mu_k$ does) then
\begin{equation}
\label{eqn_construction2}
\abs{n_{k}^{-\frac{d-\alpha}{d}}E_\alpha^0\paren{z_1,\ldots, z_{n_{k}}}-c}<\epsilon_{k}
\end{equation}
with probability greater than $1-\epsilon_k,$ as desired.

The argument for even $k$ is very similar, except we set $\mathcal{I}_k=L\paren{S_{k-1}},$ the intervals of $\mathcal{I}_k$ have length $\frac{5}{7}\paren{\frac{1}{3}}^{b_{k-1}+1},$ $f_k$ is a natural map from $\frac{5}{7}T_{b_{k-1}+1}$ to $\mathcal{I}_k,$ $\mu_k$ is the pushforward of $\nu$ by $f_k,$ and $c$ is replaced by $\paren{\frac{5}{7}}^\alpha c$ in Equations~\ref{eqn_construction1} and~\ref{eqn_construction2}.

Let $\sigma$ be the natural probability measure on $S=\cap_{j}S_j$ (the one that assigns equal probability to the intervals of $S_j$ for all values of $j$), and let $\set{z_j}_{j\in\mathbb{N}}$ be i.i.d. samples from $\sigma.$ By construction
\[n_{2k}^{-\frac{d-\alpha}{d}} E_\alpha^0\paren{z_1,\ldots,z_{n_{2k}}}\rightarrow c\]
but
\[n_{2k+1}^{-\frac{d-\alpha}{d}} E_\alpha^0\paren{z_1,\ldots,z_{n_{2k+1}}}\rightarrow \paren{\frac{5}{7}}^\alpha c\]
in probability as $k\rightarrow\infty.$

To complete the proof, we will show that $\sigma$ is $d$-Ahlfors regular. Let $x\in S.$ As a first case, let $m\in\mathbb{N}$ and consider the ball of radius $\frac{1}{3^m} $ centered at $x,$ an interval of length $\frac{2}{3^m}.$ $S_{m+1}$ contains $2^m$ intervals whose lengths are either $\frac{1}{3^m}$ or $\frac{5}{7}\frac{1}{3^m}.$ $B_{3^{-m}}\paren{x}$ contains at least $1$ interval of $S_{m+1}$ (the one that has $x$ as an element) and intersects at most $4$ such intervals. Therefore,
\[\sigma\paren{ B_{3^{-m}}\paren{x}}\geq \frac{1}{\abs{S_m}}=2^{-m}=\paren{3^{-m}}^{d}\]
and
\[\sigma\paren{ B_{3^{-m}}\paren{x}}\leq \frac{4}{\abs{S_m}}=4\paren{2^{-m}}=4\paren{3^{-m}}^{d}\,.\]

Let $0<\delta<1,$ $\epsilon_0=3^{\floor{\log_3\paren{\delta}}},$ and $\epsilon_1=3^{\ceil{\log_3\paren{\delta}}},$ so
\[\epsilon_0 \leq  \delta < 3 \epsilon_0 \text{ and } \epsilon_1/3 < \delta \leq \epsilon_1\,.\]

By our previous computations,

\[\sigma\paren{ B_{\delta}\paren{x}}\geq \sigma\paren{B_{\epsilon_0}\paren{x}} \geq \epsilon_0^{d} \geq 3^{-d} \delta^d\]
and
\[\sigma\paren{ B_{\delta}\paren{x}}\leq \sigma\paren{B_{\epsilon_1}\paren{x}} \leq 4\epsilon_1^{d} \leq 4\paren{3^{d} \delta^d}.\]
Therefore, $\sigma$ is $d$-Ahlfors regular with $\delta_0=1$ and $c=4\paren{3^d}.$ 
 
\end{proof}

The construction for general $d\in\paren{0,1}$ is nearly identical, but is based on the middle-$\beta$ Cantor set rather than the middle thirds Cantor set. Let $0<\beta<1$ and $\gamma=\frac{1-\beta}{2}.$ For an interval $I$ define
\[K_{\beta}\paren{I}=\set{\gamma I, \gamma I + \paren{1-\gamma}\abs{I}}\,,\]
so $K_\beta\paren{I}$ consists of two intervals of length $\gamma\abs{I}$ obtained by removing an interval of length $\beta\abs{I}$ from the middle of $I.$ Let $T_1^\beta=\brac{0,1},$ and inductively define $T_k^\beta=K\paren{T_{k-1}^\beta}.$ If $T^\beta=\cap_{k} T_k^\beta,$ then $T$ is the union of two separated copies of itself rescaled by $\gamma,$ and the natural measure on $T$ is Ahlfors regular of dimension 
\[d\coloneqq \frac{\log\paren{2}}{\log\paren{1/\gamma}}=\frac{\log\paren{2}}{\log\paren{2}-\log\paren{1-\beta}}\,.\]
Note that $d$ ranges between $1$ and $0$ as $\beta$ ranges from $0$ to $1.$ To finish the construction, repeat the previous argument verbatim except replace $\frac{5}{7}$ with a scaling factor $\eta$ so that 
\[1-\beta<\eta<1\]
and $\frac{7}{5}$ with $\frac{1}{\eta}.$ This will produce a $d$-Ahlfors regular measure so that if $0<\alpha<d$ then $n^{-\frac{d-\alpha}{d}} E_\alpha^0\paren{x_1,\ldots,x_{n}}$ oscillates between a positive constant and $\eta^\alpha$ times that constant.

\section{Scaling of Persistent Homology}

\begin{figure}
\centering  
\subfigure[]{\includegraphics[width=0.3\linewidth]{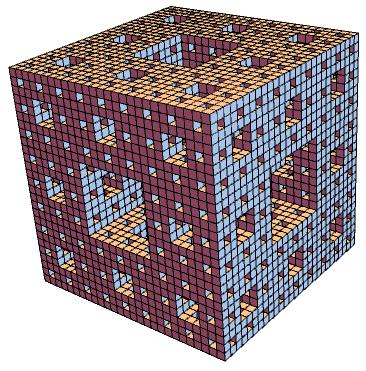}}
\subfigure[]{\includegraphics[width=0.3\linewidth]{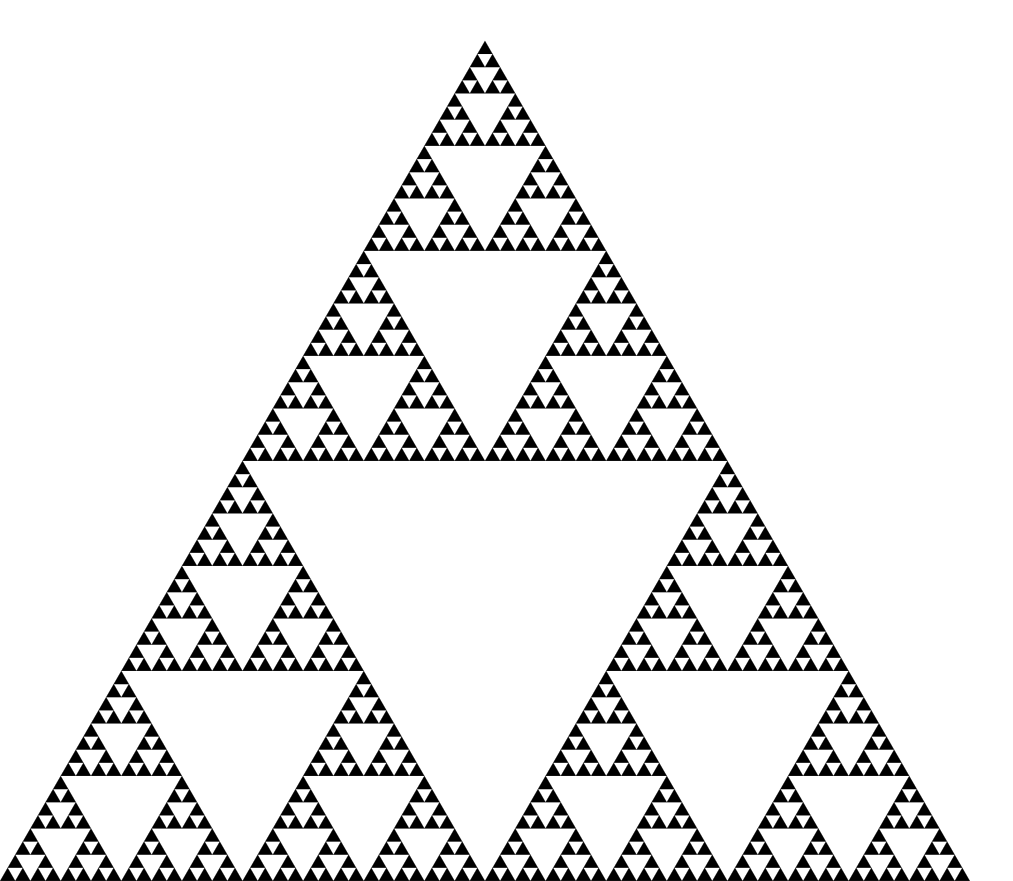}}
\subfigure[]{\includegraphics[width=0.3\linewidth]{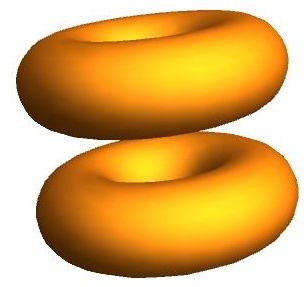}}
	\caption{\label{fig:appendix_examples} The (a) Menger sponge, (b) Sierpi\'{n}ski triangle, and (c) two stacked tori. Figures were generated in Mathematica.}
		\end{figure}

We provide computational evidence that the hypotheses of Theorem~\ref{thm_probabalistic} hold in many cases. We examine four examples in $\mathbb{R}^3$ --- the natural measures on the Menger sponge and the Sierpi\'{n}ski triangle cross an interval, the uniform measure on two tori stacked one above the other, and empirical data from earthquake hypocenters. See Figure~\ref{fig:appendix_examples}. The first three are Ahflors regular measures, with dimensions of $\frac{\log\paren{20}}{\log\paren{3}}\approx 2.727,$ $1+\frac{\log\paren{3}}{\log\paren{2}}\approx 2.585,$ and $2,$ respectively. Note that $\gamma_1^2 \leq 2.5$~\cite{2018schweinhart}, so the first two examples are known to meet all requirements of Theorem~\ref{thm_probabalistic} for $i=1$ except for perhaps the scaling of the expectation and variance of the number of intervals.

\begin{figure}
\centering  
\subfigure[]{\includegraphics[width=0.4\linewidth]{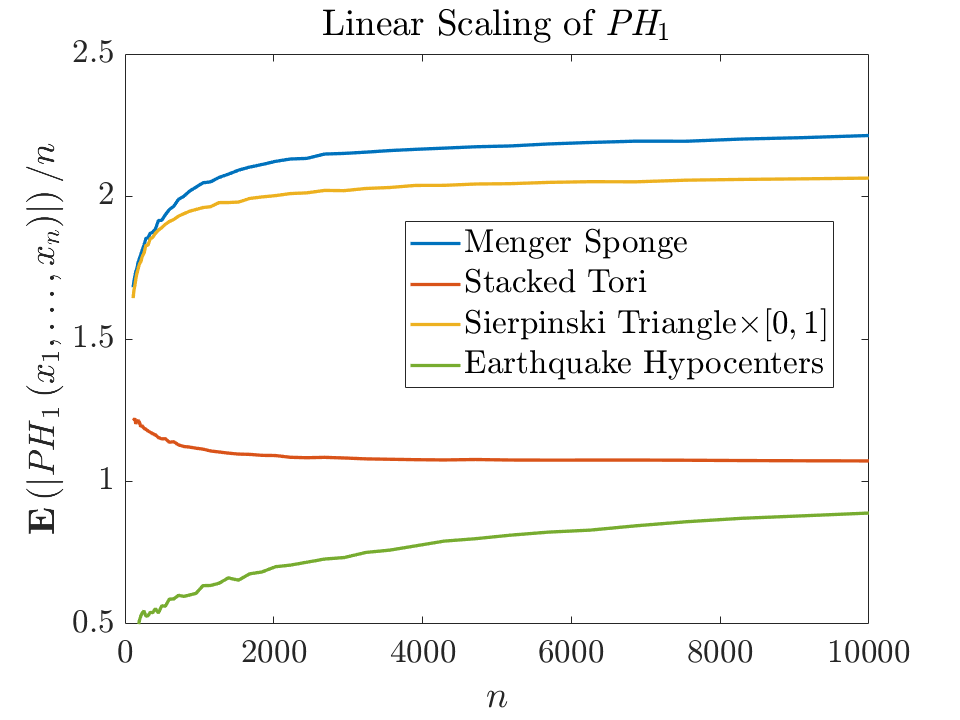}}
\subfigure[]{\includegraphics[width=0.4\linewidth]{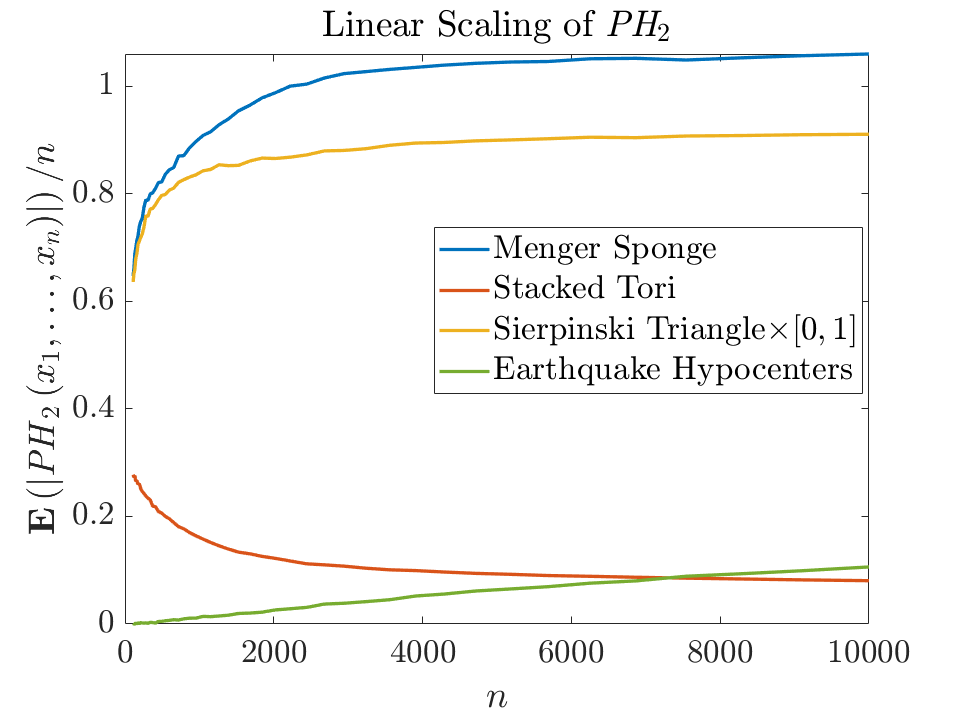}}
	\caption{\label{fig:expectation} Scaling of $\abs{\PH_i\paren{x_1,\ldots,x_n}}/n$ for four examples, and $i=1,2.$}
	\end{figure}
	
	\begin{figure}
\centering  
\subfigure[]{\includegraphics[width=0.4\linewidth]{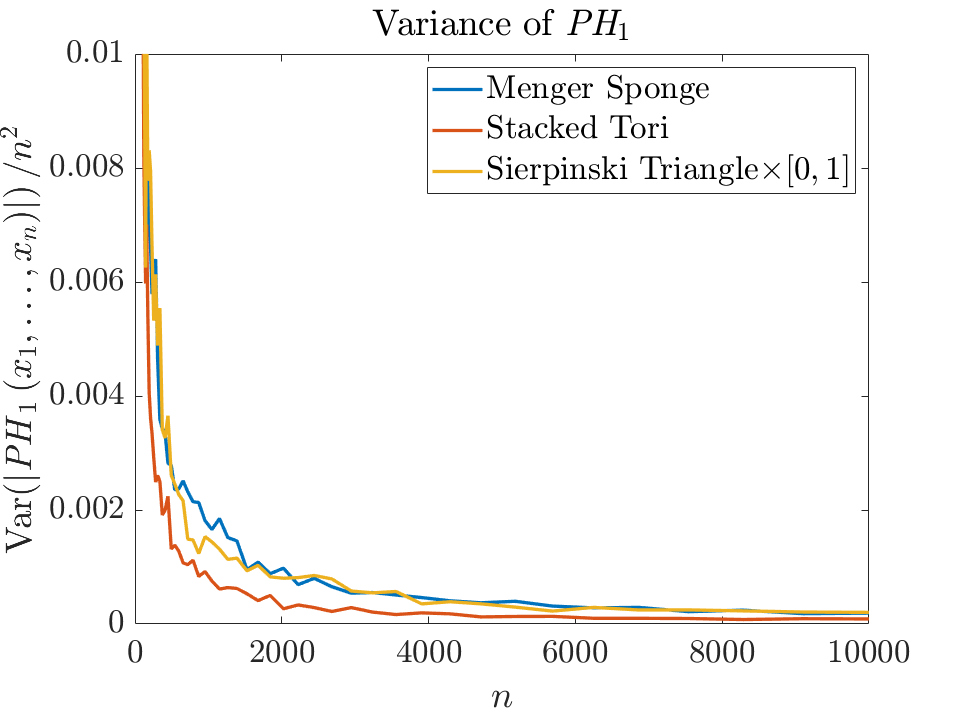}}
\subfigure[]{\includegraphics[width=0.4\linewidth]{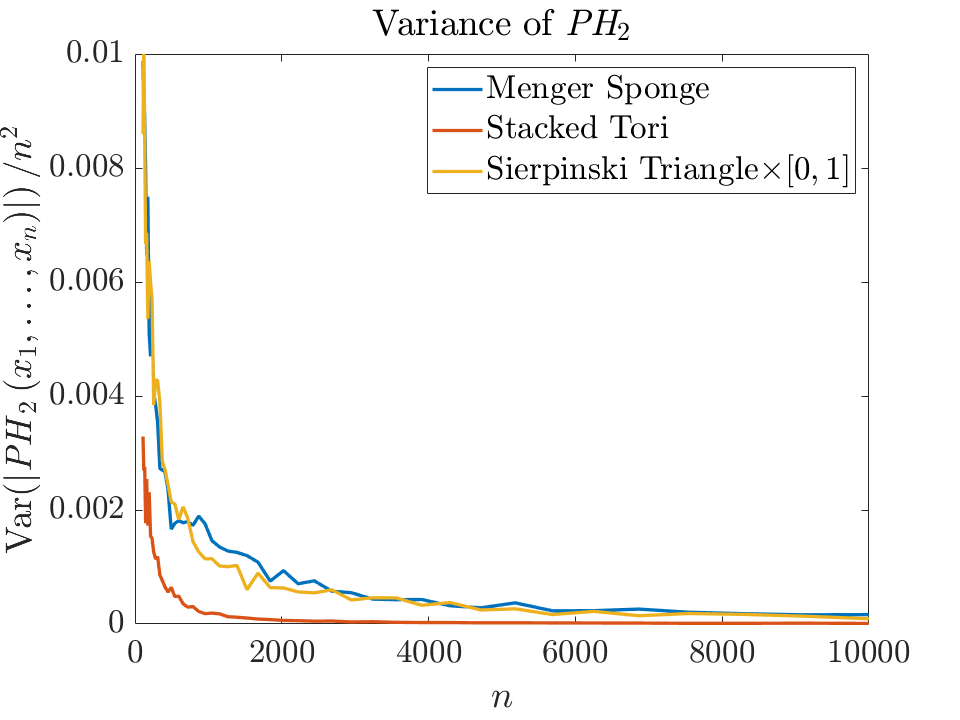}}
	\caption{\label{fig:variance} Variance of $\abs{\PH_i\paren{x_1,\ldots,x_n}}$ divided by $n^2$ for three examples, and $i=1,2.$}
	\end{figure}

We sample points from the natural measures on the Menger sponge and the Sierpi\'{n}ski triangle using the procedures described in~\cite{2019jaquette}. The rejection sampling algorithm developed in~\cite{2013diaconis} was used to sample points from the uniform distribution of the torus $\left({\sqrt {x^{2}+y^{2}}}-R\right)^{2}+z^{2}=r^{2}$ with $R=2$ and $r=1.$ The $z$-coordinate was translated by $3$ with probability $\frac{1}{2}.$ The earthquake hypocenter data comes from the  Hauksson--Shearer Waveform Relocated Southern California earthquake catalog~\cite{2012hauksson,2007lin,2013scedc}; data was processed as in~\cite{2019jaquette}. Persistent homology was computed using the implementation of the Alpha complex in GUDHI~\cite{GUDHI_persistence, GUDHI_alpha}.

Figure~\ref{fig:expectation} shows the empirical expectation of $\PH_i\paren{x_1,\ldots,x_n}$ divided by $n$ for each of the four examples, and $i=1,2.$ The expectation was averaged over $100$ trials for each example except for the earthquake data, which was averaged over $7.$ In each case, the quantity appears to limit to a constant with $n,$ indicating linear scaling of the number of intervals. Figure~\ref{fig:variance} shows the empirical variance of $\PH_i\paren{x_1,\ldots,x_n}$ divided by $n^2$ for the three regular examples. This quantity decreases toward zero for all examples.

\end{document}